\documentclass[a4paper, reqno,12pt]{amsart}

\usepackage{standalone, tikz, tikz-3dplot, appendix}
\usepackage{amsmath, amsthm, amssymb,graphics, mathtools}
\usepackage[retainorgcmds]{IEEEtrantools}
\theoremstyle{plain}
\newtheorem{te}{Theorem}[section]
\newtheorem{lem}[te]{Lemma}
\newtheorem{lemm}{Lemma}

\newtheorem{co}[te]{Corollary}
\newtheorem{pr}[te]{Proposition}

\theoremstyle{remark}
\newtheorem*{re}{Remark}
\newtheorem*{ack*}{Acknowledgment}

\textwidth16.5cm
\topmargin0cm
\oddsidemargin0cm
\evensidemargin0cm
\textheight22.5cm

\def\y{{\bf y}}\def\ev{{\bf e}}
\def \rt{{\bar{t}}}

\def\t{{\tau}}

\def \k{{\kappa}}

\def\th{{\theta}}

\def \bd{{\bar{\delta}}}
\def \a{{\alpha}}

\def\D{{\Delta}}
\def\bD{{\bar{\Delta}}}

\def\R{{\mathbb R}}

\def\N{{\mathbb N}}
\def\C{{\mathbb C}}

\def\P{{\mathbb P}}

\def\M{{\mathbb M}}
\def \Mb{{\mathbf M}}
\def\A{{\mathbf A}}
\def\L{{\Lambda}}

\def\J {{\mathbf J}}
\def\x{{\mathbf x}}

\def \e{{\epsilon}}

\def\Dec{{\operatorname{\textbf{Dec}}}}
\def\nDec{{\normalfont{\Dec}}}

\def\nint{\mathop{\diagup\kern-13.0pt\int}}

\def\Ic{{\mathcal I}}
\def \Rc{{\mathcal R}} 
 \def \Ic{{\mathcal I}}
\def\Nc{{\mathcal N}}
\def\Jc{{\mathcal J}}
\def\Rc{{\mathcal R}}
\def\Ac{{\mathcal A}}
\def\Pc{{\mathcal P}}
\def\Sc{{\mathcal S}}

\def \Pf{{\mathfrak P}}
\def \Cf{{\mathfrak C}}
\def \Mf{{\mathfrak M}}
\def \Db{{\mathbf D}}
\def \bD{{\bar{\D}}}
\def \Pco{{\overline{\Pc}}}

\def\emph#1{{\it #1}}

\begin{document}

		\author{D\'ominique Kemp}
		\address{School of Mathematics, Institute for Advanced Study,  Princeton NJ}
		\email{dekemp@math.ias.edu}

		\title[Decoupling for ruled hypersurfaces generated by a curve]{Decoupling for ruled hypersurfaces generated by a curve}\maketitle

\begin{abstract}
We extend previous work on the two-dimensional developable tangent surface to its higher dimensional analogues $\Mf \subset \R^{n+1}$. The approach here similarly applies cylindrical approximate decoupling at its core, albeit in a new format. However, the presence of additional rulings as $n$ increases necessitates a case-by-case analysis, which in itself reveals interesting aspects of the geometry of $\Mf$. The contributions of this paper can be viewed as culminating in the optimal $\ell^2(L^p)$ decoupling over Frenet boxes approximating a suitably defined, arbitrarily thin neighborhood of a curve $\phi$.
\end{abstract}
\maketitle

\section{Introduction}
\label{s1}

Let $\phi: [-1,1] \rightarrow \R^{n+1}$ be a nondegenerate, $C^{n+1}$ curve. We consider here the $n$-dimensional (compact) hypersurfaces $\Mf^n \subset \R^{n+1}$ parametrized by \begin{equation} \label{1.1} \y(t, s_1, s_2, \dots, s_{n-1}) = \phi(t) + s_1 \phi'(t) + s_2 \phi''(t) + \cdots + s_{n-1} \phi^{(n-1)}(t), \qquad t \in [-1,1], s_i \in [-2,2].\end{equation} Throughout, we shall use $s$ to abbreviate the tuple $$s\coloneqq (s_1, \dots, s_{n-1}).$$ Expanding negligibly around $\Mf^n$ in a suitable transverse direction, we approximate $\Mf^n$ by a set of positive volume: \begin{equation} \label{nbhd} \Nc_\delta(\Mf) = \{\y(t,s) + v\phi^{(n+1)}(t)/(n+1)! : |v| \in [0, \delta] \}.\end{equation} The normalization chosen for the vector $\phi^{(n+1)}(t)$ may seem curious; its rationale is to be found in Section \ref{lastsec} where it naturally arises. Fourier projections of the form $$P_S f = \int_S e(x \cdot \xi) \hat{f}(\xi) d\xi$$ onto sets $S \subset \Nc_\delta(\Mf)$ will recur throughout. ($e(r) = e^{2\pi i r}$)

The following theorem is the primary objective of this paper.

\begin{te} \label{t0} Let $2 \leq p \leq 6$. For each $\delta > 0$, there exists a partition $\Pc_\delta(\Mf^n)$ of $\Mf^n$ by almost flat subsets $\D$ such that the following $\ell^2(L^p)$ decoupling inequality holds: \begin{equation} \label{1.0} \|P_{\Nc_\delta(\Mf^n)} f\|_p \lesssim_{\phi, n, \e} \delta^{-\e} (\sum_{\Delta \in \Pc_\delta(\Mf^n)} \|P_{\Nc_\delta(\Delta)} f\|_p^2)^{1/2}, \end{equation} valid for all $\e > 0$. The dependence on $\phi$ of the constant in \eqref{1.0} is determined by the minimum value of the (rescaled) Wronskian determinant of $\phi'$ and the $C^{n+2}$ norm of $\phi$. \end{te}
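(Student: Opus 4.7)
\emph{Proof plan.} The approach exploits the ruled structure of $\Mf^n$: for each fixed $t$, the image of $s\mapsto\y(t,s)$ is an affine $(n-1)$-plane spanned by $\phi'(t),\dots,\phi^{(n-1)}(t)$, along which the hypersurface is flat. After partitioning $\Mf^n$ into Frenet boxes adapted to the local geometry, I would iteratively apply cylindrical approximate decoupling along the $n-1$ ruling directions at essentially unit cost (each step contributing a factor $\delta^{-\e}$), reducing the $(n+1)$-dimensional problem to the two-dimensional parabolic $\ell^2(L^p)$ decoupling of Bourgain--Demeter, which is sharp precisely in the range $p\le 6$.

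\emph{Construction of $\Pc_\delta(\Mf^n)$.} A Taylor expansion about a base point $(t_0,s_0)$ shows that the deviation of $\y(t_0+h,s_0+\tau)$ from its tangent plane is, to leading order, the $\phi^{(n+1)}(t_0)$-component
\[
A(h,\tau) = \frac{h^{n+1}}{(n+1)!} + \sum_{i=1}^{n-1}(s_{0,i}+\tau_i)\frac{h^{n+1-i}}{(n+1-i)!} .
\]
A Frenet box centred at $(t_0,s_0)$ is then the maximal $(h,\tau)$-range on which $|A(h,\tau)|\le\delta$, so that $\Delta$ is almost flat and $\Nc_\delta(\Delta)$ is an anisotropic slab. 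The aspect ratio depends sensitively on $s_0$ through the coefficients $s_{0,i}$ multiplying $h^{n+1-i}$, which is exactly what forces a case analysis according to the tuple of magnitudes $(|s_{0,i}|)_{i=1}^{n-1}$.

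\emph{Cylindrical decoupling and the base case.} Within the $h$-width of a Frenet box, each Frenet vector $\phi^{(i)}(t)$ is nearly constant, so along each ruling direction $\Nc_\delta(\Mf)$ is approximately translation-invariant and cylindrical approximate decoupling applies. Peeling the $(n-1)$ rulings one by one --- in an order dictated by the regime of $s_0$, roughly from the ``most degenerate'' to the ``least degenerate'' --- collapses the problem onto a two-dimensional slab spanned by $\phi^{(n)}(t_0)$ and $\phi^{(n+1)}(t_0)$, on which the cross-section of $\Nc_\delta(\Mf)$ is, after an affine rescaling, a $\delta$-neighbourhood of a parabola. The classical parabolic $\ell^2(L^p)$ decoupling for $p\le 6$ then closes the argument.

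\emph{Main obstacle.} The case analysis driven by the relative sizes of the $s_{0,i}$ is the crux. When every $|s_{0,i}|\asymp 1$, the tangent plane is non-degenerate and a single ordering of ruling-peeling works globally; as some $s_{0,i}$ become small, the corresponding rulings effectively collapse, the dominant term of $A(h,\tau)$ changes, and the Frenet-box aspect ratio with it. One must assemble a single partition $\Pc_\delta(\Mf^n)$ that respects all these transitions, verify that each cylindrical step is uniformly $\delta^{-\e}$-cheap in every regime (controlled by the hypothesised Wronskian lower bound on $\phi'$, which prevents the Frenet directions from collapsing into one another), and confirm that the final 2D parabola appears with a uniform constant across cases. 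This bookkeeping --- rather than any single deep inequality --- is where the real work of the proof lies.
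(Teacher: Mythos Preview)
Your proposal captures some correct intuition---the case analysis on the magnitudes of the $s_i$, the iterative structure, and the ultimate reliance on Bourgain--Demeter for the parabola---but the mechanism you describe is not the one that actually works, and two key structural pieces are missing.

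First, the phrase ``peeling the $(n-1)$ rulings one by one \dots\ collapses the problem onto a two-dimensional slab'' misrepresents the iteration. The ruling directions $\phi^{(i)}(t)$ are \emph{not} fixed: they rotate with $t$, so the surface is never a genuine cylinder in any ruling direction and there is no free ``flat'' decoupling to exploit along them. What the paper does instead is apply \emph{parabolic} cylindrical decoupling (Corollary~\ref{co1}) repeatedly, each time using the relation $\xi_{i+1}\approx \xi_i^2$ for a carefully chosen index $i$ that depends on which rescaled $s_{i-1}$ is currently maximal (see the sets $\Sc^l$ in Section~\ref{sgen}). Thus the parabola is invoked at \emph{every} step of the iteration, not once at the end; the cylinder axis is always a coordinate hyperplane, not a ruling direction. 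Your picture of $(n-1)$ cheap peels followed by one parabolic step would not close.

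Second, the paper does not work with a general $\phi$ directly. Everything is first proved for the moment curve (Theorem~\ref{t0.1}), where two algebraic miracles are available: Lemma~\ref{l2} gives an exact affine symmetry $\A$ that recenters any cap to ``starting position'' $t=0$, and the anisotropic dilations $\Db_l$ of Section~\ref{sgen} preserve the surface while normalising the next $s_{i_l}$ to scale $\sim 1$. These are what make the iteration in Proposition~\ref{pr4.1} possible---after each parabolic decoupling one recenters and rescales before decoupling again---and they simply do not exist for an arbitrary $\phi$. The passage from $\M^n$ to $\Mf^n$ is accomplished afterwards by Pramanik--Seeger induction on scales (Section~\ref{lastsec}): at scale $\bar\delta=\delta^{(n+1)/(n+2)}$ one invokes the inductive hypothesis for $\Mf$, then on each resulting cap uses the affine map $\Mb$ built from the Frenet matrix $\Phi(t_0)$ to transfer to (a perturbation of) $\M^n$ and apply Theorem~\ref{t0.1}. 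This two-stage structure---model case plus induction on scales---is absent from your outline, and without it the constants cannot be controlled by the Wronskian determinant and $C^{n+2}$ norm as stated.
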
 

Inequality \eqref{1.0} is referred to in the literature as an $\ell^2$ {\em decoupling.} The inequality does lead to an $\ell^2$ decoupling by almost rectangular caps, which shall be explained in more detail in the sequel.

We obtain Theorem \ref{t0} by specializing to the {\em moment surface} $\M^n$. These are the hypersurfaces generated by the moment curve $$t \mapsto (t, t^2, \dots, t^{n+1}).$$ They are parametrized by \begin{equation} \label{1.01} \x(t,s) = \sum_{i=1}^{n+1} (t^i + \sum_{j=1}^i (i)_j s_j t^{i-j})\ev_i \end{equation} where $(i)_j$ is the usual notation for the falling factorial $i!/(i-j)!.$

\begin{te} \label{t0.1} Let $2 \leq p \leq 6$, and let $\M^n$ be the $n$-dimensional moment surface. There exists a constant $\k > 0$ such that the following holds. For each $\delta > 0$, there exists a partition $\Pc_\delta(\M^n)$ by almost flat subsets $\D$ such that we have the $\ell^2(L^p)$ decoupling inequality: \begin{equation} \label{1.02} \|P_{\Nc_\delta(\M^n)} f\|_p \leq  (60\delta^{-\e/2})^{\k n/2} n^{1/2} \big(\kappa \log \big(1/\delta \big)\big)^{\k\e^{-1}n \log n} \Big(\sum_{\Delta \in \Pc_\delta(\M^n)} \|P_{\Nc_\delta(\Delta)} f\|_p^2 \Big)^{1/2}, \end{equation} valid for all $\e > 0$.  \end{te}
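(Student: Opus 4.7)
The plan is to proceed by a multi-scale induction, using the cylindrical approximate decoupling advertised in the abstract as the main engine, with the base case $n=2$ taken from the author's earlier work on the developable tangent surface. The decisive structural observation is that $\M^n$ is ruled with $n-1$ flat directions spanned by $\phi'(t),\ldots,\phi^{(n-1)}(t)$, so the only genuine curvature comes from the moment curve; this is why the sharp integrability exponent is $p = 6$, exactly as for the parabola.

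First I would fix an intermediate scale $\sigma = \delta^{1/(n+1)}$ and partition $[-1,1]$ into intervals $I$ of length $\sigma$. Taylor-expanding $\x(t,s)$ around the midpoint $t_I$, the piece $\x(I \times [-2,2]^{n-1})$ should equal, modulo errors of size $O(\delta)$, the image under an affine map $L_{t_I}$ of a cylinder whose base is a short parabolic arc and whose axes lie along the ruling derivatives $\phi^{(j)}(t_I)$. The key geometric lemma to prove is that $\Nc_\delta$ of this piece is, after applying $L_{t_I}^{-1}$, comparable to the product of a parabolic $\delta$-neighborhood in a $2$-plane with a flat $O(1)^{n-1}$ box along the rulings. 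The normalization $\phi^{(n+1)}(t)/(n+1)!$ in \eqref{nbhd} is chosen precisely to make this product decomposition clean.

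With the cylindrical model in hand, cylindrical approximate decoupling reduces each scale-$\sigma$ block to Bourgain--Demeter $\ell^2(L^p)$ decoupling for the parabola (valid for $p \leq 6$), costing at most $C_0\delta^{-\e/2}$ per application. The case-by-case analysis enters here: the relative sizes of the ruling parameters $s_1,\ldots,s_{n-1}$ dictate which affine map $L_{t_I}$ yields a valid cylindrical approximation on a given subregion of $[-2,2]^{n-1}$, so one must split the ruling cube into $O(n)$ regimes and handle each with its own cylinder; Cauchy--Schwarz across regimes accounts for the factor $n^{1/2}$ in \eqref{1.02}. Iterating this reduction at progressively finer scales $\sigma, \sigma^2, \ldots$ until the target Frenet-box partition $\Pc_\delta(\M^n)$ is reached yields the exponent $\k n/2$ on $(60\delta^{-\e/2})$, while the polylogarithmic term $(\k\log(1/\delta))^{\k\e^{-1}n\log n}$ absorbs the $O(\log(1/\delta))$ scale-doubling steps nested within each of the $n$ dimension-reduction stages, via standard $\e$-removal bookkeeping.

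The main obstacle I expect is the uniform verification of the cylindrical approximation across the ruling-parameter splitting: for each subregion $R \subset [-2,2]^{n-1}$ one must exhibit an affine change of coordinates that turns $\x(I \times R)$ into the required cylinder over a parabolic arc with constants controlled by the Wronskian of $\phi'$ and the $C^{n+2}$ norm of $\phi$, and confirm that the residual Taylor terms of $\x$ in $(t,s)$ remain of size $O(\delta)$ after this change. Carrying out this geometric step uniformly is the heart of the matter; granted it, the explicit constant in \eqref{1.02} follows from careful accounting of the iteration.
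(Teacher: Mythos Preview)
Your central geometric lemma is not correct as stated, and this is a genuine gap rather than a matter of bookkeeping. After applying an affine map $L_{t_I}$, the image of $\x(I \times R)$ is \emph{not} comparable to a product of a parabolic $\delta$-neighborhood in a single $2$-plane with a flat $(n-1)$-box along the rulings. The parametrization \eqref{1.01} has nonlinear dependence on every $s_j$ in every component $\xi_i$ with $i > j$, and no single affine change of variables removes this. What the paper exploits instead is a \emph{sequence} of approximate parabolic relations between \emph{consecutive} components: $\xi_{i+1} \circ \x \approx (\xi_i \circ \x)^2$ for each $i$, with error computed explicitly in \eqref{4.118}--\eqref{4.122}. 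The quadratic coefficient of $t^2$ in this error vanishes only when $s_{i-1}$ is close to the special value $c_i = (i+1)!/(2(i!)^2)$, which is why the paper must first rescale to force $s_{i-1} \approx c_i$ (the ``standardized'' condition $(\star)$) before each cylindrical decoupling step.

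Consequently the iteration is not over scales $\sigma, \sigma^2, \ldots$ but over an increasing sequence of indices $i_0 < i_1 < \cdots < i_N \leq n-1$, each chosen as the maximizer of the set $\Sc^l$ of (rescaled) $s_i$-sizes; at stage $l$ one applies Proposition~\ref{pr4.1} with the cylinder $\Pf^{i_{l-1}+1}$, then the anisotropic dilation $\Db_l$ sending $\xi_i \mapsto d_l^{\,i - i_{l-2}} \xi_i$ to normalize $s_{i_l}$, then the translation map of Lemma~\ref{l2} to return to starting position. Neither the rescaling maps $\Db_l$ nor the translation lemma appear in your outline, and without them the iteration cannot close: after one cylindrical decoupling the caps are not in a form to which another parabolic decoupling applies. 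Your accounting of the constants is also off: the dyadic decomposition of the ruling cube in \eqref{beg} produces $\prod_j \log(j!\delta^{-j/(n+1)})$ many annuli, not $O(n)$; the $n^{1/2}$ comes from the telescoping $\prod_l \sqrt{(i_l+2)/(i_{l-1}+2)}$ and the overlap count in Lemma~\ref{le1}; and the exponent $\e^{-1} n \log n$ arises because each of the $\leq n$ stages contributes a factor $(\kappa\Dec_p^\P(\delta))^{2\e^{-1}[(n+1-i_{l-1})^{-1} + \log(i_{l-1}+1)]}$ as in \eqref{7.11}.
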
 

Theorem \ref{t0} will follow from Theorem \ref{t0.1} by a straightforward induction on scales.

\begin{re} The caps $\D$ in both theorems as well as the constant in \eqref{1.0} are quantitatively described in this paper. We have saved those assessments for the proofs of the theorems, where they may be found as \eqref{7.14} and Theorem \ref{constant2} respectively. Notably, conventional scenarios do exist where the $n$ dependence of \eqref{1.02} almost entirely vanishes. These are presented in \eqref{7.13} and the remark following, which show that the exponents $n/2$ and $n \log n$ in \eqref{1.02} may be replaced by 1 in their context.  \end{re}

In order to calibrate the dimensions of the caps, it is necessary to obtain a preliminary basic decoupling for $\M$. For this purpose, let us define $\Ic_k = [2^{-k}, 2^{-k+1})$ and decompose $\M^n$ into subsets of the form $$\Ac^\a_{k_1, \dots, k_{n-1}} = \x([-1,1] \times R^{\alpha}_{k_1, \dots, k_{n-1}})$$ where $\alpha: \{1, \dots, n-1\} \rightarrow \{+, -\}$ is a function and \begin{equation} \label{1.2} R^\a_{k_1, \dots, k_{n-1}} = \{(s_1, \dots, s_{n-1}): \alpha(j) s_j \in \Ic_{k_j} \}.\end{equation} Each $k_j$ ranges among the integers inclusively between 0 and the value $\bar{k_j}$ such that \begin{equation} \label{1.20} 2^{-\bar{k_j}+1} \leq (1/j!)\delta^{j/(n+1)} < 2^{-\bar{k_j}+2}. \end{equation} When some $k_j$ equals $\bar{k_j}$, we modify the corresponding half-open interval in \eqref{1.2} to $[0, 2^{-\bar{k_j} +1})$.
The decomposition of $\M^n$ is reflected in the trivial inequality obtained by H\"older's inequality: \begin{equation} \label{beg} \|P_{\Nc_\delta(\M^n)} f\|_p \leq 2^{(n-1)/2} \prod_{j=1}^{n-1}\log \big(j!\delta^{-j/(n+1)} \big) (\sum_\a \sum_{k_1=0}^{\overline{k_1}} \cdots \sum_{k_{n-1}=0}^{\overline{k_{n-1}}} \|P_{\Nc_\delta(\Ac^\a_{k_1, \dots, k_{n-1}})} f\|_p^2)^{1/2}. \end{equation}

Our main task is to confirm Theorem \ref{t0.1}, and we shall find good reason to specialize even further to the four-dimensional setting first. The three-dimensional setting was proven in \cite{K1}, although the argument presented here does apply with less difficulty. In four dimensions, \eqref{1.01} becomes \begin{equation} \label{1.3} \x(t, s) = (t+ s_1, t^2 + 2ts_1 + 2s_2, t^3 + 3t^2s_1 + 6ts_2, t^4 + 4t^3s_1 + 12t^2s_2). \end{equation} The proof of the following decoupling theorem for $\M = \M^3$ gives opportunity for introducing our methods.

\begin{te} \label{t1} Let $\delta > 0$. For each choice of $k_j$, we partition $[-1,1]$ into intervals $I$ of length $\min\{2^{k_1 - k_2}, (2^{k_2}\delta)^{1/2}\}$. Let $\D_{k_1, k_2}$ denote the images $\x(I \times R^\a_{k_1, k_2})$. $\D_{k_1, k_2}$ is almost rectangular.\newline

For all $2 \leq p \leq 6$, $\bigcup_{k_1, k_2} \{\D_{k_1, k_2}\}$ is an $\ell^2(L^p)$ decoupling partition $\Pc_\delta(\M)$.
\end{te}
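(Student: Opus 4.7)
The plan proceeds in two stages: first verify almost rectangularity of each $\D_{k_1,k_2}$, then establish the $\ell^2(L^p)$ decoupling by splitting on which term achieves the minimum in $\ell = |I| = \min\{2^{k_1-k_2}, (2^{k_2}\delta)^{1/2}\}$.

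For the geometry, I would Taylor expand $\x(t, s_1, s_2)$ around a basepoint $(t_0, s_1^0, s_2^0) \in I \times R^\a_{k_1,k_2}$ in the basis $\{\phi'(t_0), \phi''(t_0), \phi'''(t_0), \phi^{(4)}(t_0)/24\}$, which differs from the Frenet frame at $t_0$ by a bounded triangular change of variables. A direct computation gives coordinates of the form $F_1 = (t - t_0) + (s_1 - s_1^0)$, $F_2 = \tfrac{1}{2}(t-t_0)^2 + s_1(t-t_0) + (s_2 - s_2^0)$, and $F_3 = \tfrac{1}{6}(t-t_0)^3 + \tfrac{1}{2} s_1 (t-t_0)^2 + s_2(t-t_0)$, with the remaining Taylor remainders plus the $\delta$-thickening contributing to the $\phi^{(4)}/24$ direction. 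The threshold defining $\ell$ is calibrated so that $2^{-k_2} \ell^2 \lesssim \delta$ and $2^{-k_1} \ell^3 \lesssim \delta$ hold simultaneously (the two cases of the minimum correspond to which of these binds); these control the fourth-direction remainders $s_2 (t-t_0)^2$ and $s_1 (t-t_0)^3$ to within the $\delta$-thickening. A Jacobian computation, using $\partial_t \x = \phi' + s_1 \phi'' + s_2 \phi'''$ reduced against $\partial_{s_1}\x = \phi'$ and $\partial_{s_2}\x = \phi''$, then confirms that the $4$-dimensional volume of $\Nc_\delta(\D_{k_1,k_2})$ matches the box volume up to constants.

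For the decoupling, starting from \eqref{beg} it suffices to decouple each $P_{\Nc_\delta(\Ac^\a_{k_1,k_2})}f$ in $\ell^2$ into the sub-pieces $\{P_{\Nc_\delta(\D_{k_1,k_2})}f\}_I$. In Case A ($\ell = (2^{k_2}\delta)^{1/2}$), the $t$-length is dictated by the $\delta$-thickness, and after a parabolic rescaling in the Frenet frame the problem reduces to the $\ell^2$ decoupling of Bourgain--Demeter for a nondegenerate curve, valid for $2 \le p \le 6$. In Case B ($\ell = 2^{k_1-k_2}$), the $t$-length is dictated by the $s_1$ ruling matching the $s_2$ scale; here I would invoke a cylindrical approximate decoupling along the $\phi''$-direction, reducing the problem to a decoupling on a two-dimensional developable-tangent-surface slice supplied by \cite{K1}.

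The main obstacle is Case B: one must verify that the cylindrical structure along $\phi''$ is accurate to within the tolerance afforded by the $\delta$-thickening at the chosen scale $\ell = 2^{k_1-k_2}$, i.e., that the bending of the $\phi''$-ruling introduced by the cross terms $s_2 (t-t_0)$ in $F_3$ and $s_1 (t-t_0)^2$ in the $\phi^{(4)}$ direction is exactly what the defining threshold for $\ell$ was chosen to absorb. Once this structural claim is secured, summing over $(\a, k_1, k_2)$ via \eqref{beg} --- whose polylogarithmic prefactor is harmless at the level of decoupling constants --- completes the proof.
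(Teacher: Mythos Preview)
Your proposal has a genuine structural gap in both cases of the decoupling argument. You treat each case as a one-shot reduction (a single rescaling followed by a known decoupling theorem), but the actual mechanism is an \emph{iterated} parabolic cylindrical decoupling, and the iteration is essential.

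Concretely, in Case A your claim that ``after a parabolic rescaling the problem reduces to Bourgain--Demeter for a nondegenerate curve'' does not work as stated. A single application of parabolic cylindrical decoupling on $\Pf^i$ only reduces the $t$-length by a factor of $\delta^{\e/2}$, because the error $E$ in \eqref{4.123} contains the self-referential term $\delta^\e (iT)^2$; one step takes you from $T$ to $\delta^{\e/2}T$, not to the target scale. The paper's Proposition~\ref{pr4.1} packages the $O(1/\e)$ iterations needed. Moreover, before this iteration can even begin, an initial $\Pf^1$-decoupling must bring $t$ down to the scale of $s_2^{1/2}$ (or $s_1$), and then a carefully chosen anisotropic rescaling must place $s_{i-1}$ near the critical value $c_i = (i+1)!/(2(i!)^2)$ of \eqref{4.1190} so that the quadratic coefficient in $\xi_{i+1} - \xi_i^2$ becomes $O(\delta^\e)$ rather than $O(1)$. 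None of this is visible in your sketch. Also note that your case split (on which term attains the minimum) does not match the paper's split on $2k_1 \gtrless k_2$; in particular your Case A contains the subcase $k_2 > 2k_1$ with \eqref{5.32} false, which in the paper requires \emph{two} full rescaling-plus-iteration rounds (first via $\Pf^2$ to reach $t$-length $2^{k_1-k_2}$, then via $\Pf^3$ to reach $(2^{k_2}\delta)^{1/2}$).

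In Case B your proposed ``cylindrical decoupling along the $\phi''$-direction reducing to a two-dimensional slice from \cite{K1}'' is not what the paper does and is not obviously viable: the $\phi''(t)$-direction varies with $t$, so there is no fixed cylinder axis, and the bending term $s_2(t-t_0)\phi'''(t_0)$ you yourself flag is of size $2^{-k_2}\ell \sim 2^{k_1 - 2k_2}$, which is \emph{not} absorbed by the $\delta$-thickening (it need only be $\lesssim 2^{-k_1}$, the $s_1$-width, and that is exactly what \eqref{4.114} in the paper exploits --- but at the level of the standardized $\Pf^2$ picture after rescaling, not as a black-box projection to \cite{K1}). The paper instead handles this case by the same machinery: $\Pf^1$-decouple to $t$-length $2^{-k_1}$, rescale so $s_1' \approx c_2 = 3/4$, then iterate $\Pf^2$-decoupling via Proposition~\ref{pr4.1} with $L \sim 2^{-k_2'}$.
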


The proof of Theorems \ref{t0}, \ref{t0.1}, and \ref{t1} will be divulged in sections. We first introduce our primary tool - {\em (parabolic) cylindrical decoupling} - in Section \ref{scyl}.

Throughout the sequel, $p$ will denote a Lebesgue exponent between 2 and 6 inclusive. 

\section{Background}

From the earliest days that Fourier transforms supported on or near Euclidean submanifolds were considered, the parabola has featured prominently. It is well-known that Stein posed the restriction conjecture for the $n$-dimensional paraboloid in the 1960's. Shortly after, H\"ormander produced the proof for $n=1$ \cite{H}. Following the advent of decoupling in Wolff's seminal work on local smoothing \cite{W}, Bourgain and Demeter leveraged the restriction theorem for the parabola with induction on the Euclidean dimension and multilinear methods to prove decoupling for the paraboloid \cite{BD}. That same article also confirms decoupling for the $n$-dimensional cone and arbitrary positively curved hypersurfaces in $\R^{n+1}$. 

Since then, it has been of special interest to the Fourier analysis community to determine the precise decoupling constant for the parabola $\P$. Let us set the stage. Given $\delta > 0$, we take $\Pc_\delta(\P)$ to be a partition of $\P$ by almost flat caps of the form $$\th = I \times \R \cap \P.$$ (See (1) on the first page of \cite{BD}.) That the caps are {\em almost flat} means that $\th$ lies within $\delta$ of any of its tangent lines, i.e. $$\sup_{t_1, t_2 \in I} |\varphi(t_2)- \varphi(t_1) - \varphi'(t_1)(t_2 -t_1)| \leq \delta$$ where $\varphi(t) = t^2$. More generally, we say that any subset (cap) $S$ of a hypersurface is almost flat if for any plane $P$ tangent to $S$ at some point, every point in $S$ lies at most $\delta$ above or below a point in $P$. Such subsets are referred to as ``caps" of the ambient hypersurface because the tangent plane $P$ and a vertical translate of $P$ subtend them.

We define $\Dec^\P_p(\delta)$ to be the smallest $K > 0$ such that the following holds for all $f$ Fourier supported in $\Nc_\delta(\P)$: \begin{equation} \label{pardef} \|f\|_p \leq K (\sum_{\th \in \Pc_\delta(\P)} \|P_{\Nc_\delta(\th)} f\|_p^2)^{1/2}. \end{equation} The monumental contribution of Bourgain and Demeter was to verify that $$\Dec_6^\P(\delta) \lesssim_\e \delta^{-\e}.$$ Z. Li \cite{ZL1} followed their work with carefully tracing that their methods produced the estimate $$\Dec_6^\P(\delta) \lesssim \exp\Big(O \Big(\frac{\log 1/\delta \cdot \log \log \log 1/\delta)}{\log \log 1/\delta}\Big)\Big).$$ Subsequently, taking inspiration from efficient congruencing, he was able to show that $$\Dec_6^\P(\delta) \lesssim \exp \Big(O\Big(\frac{\log 1/\delta}{\log \log 1/\delta}\Big)\Big)$$ \cite{ZL2}. The current best bound on $\Dec_6^\P(\delta)$, and therefore $\Dec_p^\P(\delta)$ by interpolation, is \begin{equation} \label{gmw} \Dec_6^\P(\delta) \lesssim (\log 1/\delta)^c \end{equation} for some absolute constant $c$, obtained by Guth, Maldague, and Wang in \cite{GMW}. As the authors mention there, it has been conjectured that $\Dec_p^{\P}(\delta) \leq C_p$ for $1 \leq p < 6$.

In this paper, it is our aim to contend successfully that the geometry of $\M$, hence $\Mf$, closely relates to that of $\P$. In particular, letting $\Dec_p^\Mf(\delta)$ be defined analogous to \eqref{pardef}, we obtain a bound upon $\Dec_p^\Mf(\delta)$ that is an expression involving $\Dec_p^\P(\delta)$, together with logarithms of $\delta^{-1}$ that cannot be avoided. Let us remark for the sequel that we do know $\Dec_p^\M(\delta)$ to be monotonically decreasing, as is true for $\Dec_p^\P(\delta)$. The proof is by rescaling, just as for the parabola.

Lastly, we explain some of our motivation in seeking Theorem \ref{t0}. $\Mf$ is a zero curvature hypersurface with just one nonvanishing principal curvature, which is essentially $s_{n-1}^{-1}$. Obtaining its decoupling is a natural next step following upon the proof of decoupling over the cone given in \cite{BD}. On the other hand, the ruled hypersurface \eqref{1.1} can be thought of as a ``smoothing" of the Frenet boxes associated with $\phi(t)$ for each $t \in [-1,1].$ After all, the Frenet boxes follow the orientation provided by the Gram-Schmidt orthogonalization of $\phi'(t), \dots, \phi^{(n+1)}(t)$; and the latter appear in \eqref{1.1}. In turn, the Frenet boxes feature naturally in the recent progress on the maximal averages over curves conjecture, proving that conjecture for the 3-dimensional case \cite{BGS, KLO, KLO2, PS}. It is hoped that the decouplings provided here may be useful in the resolution of the conjecture in higher dimensions. However, our analogy relating $\Mf$ to Frenet boxes does not hold if the caps $\D$ in Theorem \ref{t0} do not actually resemble boxes. The key detail required is for the $\D$ to be well approximated by rectangular boxes \footnote{Parallelepipeds work fine too!} contained within them. This geometric trait is conventionally labeled {\em almost rectangular}, to be distinguished from almost flat. In the appendix, we address this matter and show how the decoupling for almost flat caps in Theorem \ref{t0} may be leveraged to obtain the $\ell^2$ decoupling over almost rectangular caps.

\section{Notation and Outline of the Paper}

It is our aim in this paper to carefully trace and limit so far as possible the dependence on $n$ of the decoupling constants for $\M$ and $\Mf$. For this reason, we are careful to stipulate that the customary notation ``$\lesssim$" and ``$\sim$" shall hold no dependence on $n$, in addition to being independent of $\delta, \e, \phi, p$, and of course $f$. 

$$\Phi \lesssim \Upsilon \text{ and } \Phi \sim \Upsilon \  \Longrightarrow \ \Phi \leq A\Upsilon \text{ and } B \Upsilon \leq \Phi \leq A \Upsilon $$ $$\text{ where $A$ and $B$ are constants that are independent of $n, \delta, \e, \phi, p,$ and $f$.} $$\\

The scope of the paper is as follows. In the next section, we start to build the bridge from our problem to parabolic decoupling, showing how the latter may be extended to the $n$-dimensional setting via cylindrical decoupling. At its core, our argument throughout this paper is iterative, as has been customary for proofs of decoupling in the literature. Section \ref{transl} presents the essential ingredient, Lemma \ref{l2}, which makes iterative decoupling possible for $\M$. In Section \ref{s4}, we provide all the rest of the machinery that sustains our iterative decoupling scheme for $\M$. The key achievement of this section is to tie closely the geometry, and hence decoupling, of cylinders over various approximate parabolas to that of $\Nc_\delta(\M)$ at appropriate scales. Section \ref{s4dim} presents the proof of Theorem \ref{t1}, providing occasion for demonstrating the application of our methods in a simple setting. In Section \ref{sgen}, we prove Theorem \ref{t0.1}, and Theorem \ref{t0} is proved following in Section \ref{lastsec}. Finally, the appendix reflects on the question of decoupling over $\Mf$ with almost rectangular caps and paves a path toward its resolution.

\section{Cylindrical decoupling} \label{scyl}

Cylindrical decoupling has become ubiquitous in the literature since the seminal work of Bourgain and Demeter. It is useful because it provides a way of lifting known decoupling inequalities into settings with higher dimensions. Precisely, we state the following.

\begin{pr} \label{pr2} Let $S_i \subset \R^n$ and $S = \bigcup_i S_i$. Assume that the inequality $$\|P_S f\|_p \leq C (\sum_i \|P_{S_i} f\|_p^2)^{1/2}$$ holds for all $f: \R^n \rightarrow \C$. Then, $$\|P_{S \times \R} F\|_p \leq C (\sum_i \|P_{S_i \times \R} F\|_p^2)^{1/2}$$ holds for all complex-valued $F$. \end{pr}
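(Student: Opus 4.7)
The plan is to deduce the $(n+1)$-dimensional cylindrical inequality from the hypothesized $n$-dimensional decoupling by slicing in the translation-invariant direction and then applying Minkowski's integral inequality. Write coordinates on $\R^{n+1}$ as $(x,y) \in \R^n \times \R$, with dual Fourier variables $(\xi,\eta)$.

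First I would record the basic structural observation. Since $\hat{F}$ is supported in $S \times \R$, for each fixed $y \in \R$ the slice $F_y \coloneqq F(\cdot, y)$ has partial Fourier transform
$$\widehat{F_y}(\xi) = \int \hat{F}(\xi, \eta)\, e^{2\pi i y \eta} \, d\eta,$$
which is supported in $S \subset \R^n$. The identical computation shows that the partial projection factors through the slice, $(P_{S_i \times \R} F)(\cdot, y) = P_{S_i}(F_y)$, for every $y$. Consequently, applying the hypothesized $n$-dimensional decoupling to $F_y$ yields the pointwise-in-$y$ inequality
$$\|(P_{S \times \R} F)(\cdot, y)\|_{L^p(\R^n)} \leq C \Big( \sum_i \|(P_{S_i \times \R} F)(\cdot, y)\|_{L^p(\R^n)}^2 \Big)^{1/2}.$$

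Second, I would raise this to the $p$-th power and integrate in $y$. By Fubini the left-hand side becomes $\|P_{S \times \R} F\|_{L^p(\R^{n+1})}^p$. Setting $g_i(y) \coloneqq \|(P_{S_i \times \R} F)(\cdot, y)\|_{L^p(\R^n)}$, the right-hand side is $C^p \|g\|_{L^p_y(\ell^2_i)}^p$. Applying Minkowski's integral inequality in the form $\|g\|_{L^p_y(\ell^2_i)} \leq \|g\|_{\ell^2_i(L^p_y)}$ (equivalently, the triangle inequality for $L^{p/2}$ applied to $g_i^2$, valid because $p \geq 2$) and then Fubini once more to identify $\|g_i\|_{L^p_y}$ with $\|P_{S_i \times \R} F\|_{L^p(\R^{n+1})}$, I would arrive at the claimed cylindrical decoupling after taking $p$-th roots.

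The argument is entirely routine, and the only step with substance is the Minkowski inequality, which is where the standing hypothesis $p \geq 2$ is used. If pressed for full rigor, I would carry out the slicing first for Schwartz $F$, where all partial Fourier transforms are classical, and then extend by a density argument, since Fourier support in the unbounded set $S \times \R$ does not itself guarantee physical-side localization. I do not foresee any serious obstacle beyond this technicality.
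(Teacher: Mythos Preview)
Your proof is correct and is exactly the approach the paper has in mind: the paper's own proof consists of the single sentence ``The proof is implied by Fubini's theorem and Minkowski's inequality,'' and you have simply written out those two steps in detail, including the observation that Minkowski requires the standing hypothesis $p \geq 2$.
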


The proof is implied by Fubini's theorem and Minkowski's inequality. \\

Combining the above with the Bourgain-Demeter $\ell^2$ decoupling for the parabola yields the following cornerstone of our subsequent arguments.

\begin{co} \label{co1} Consider the parabolic cylindrical neighborhood \begin{equation} \label{parc} \mathfrak{P}^i = \{(\xi_1, \dots, \xi_{i-1}, \xi, \xi^2 + \eta, \xi_{i+2}, \dots, \xi_{n+1}): |\xi| \lesssim 1, |\eta| \leq E, \xi_j \in \R\}.\end{equation} Partition the $\xi_i$ axis into intervals $I$ of length $E^{1/2}$, and let $$\th =  \R \times \cdots \times \R \times I \times \R \times \cdots \times \R \cap \mathfrak{P^{i}}.$$ We have the following decoupling inequality: \begin{equation} \label{cyl} \|P_{\mathfrak{P}^i} f\|_p \leq \nDec (E ) (\sum_\th \|P_\th f\|_p^2)^{1/2}. \end{equation} \end{co}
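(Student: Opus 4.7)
The plan is to combine the Bourgain--Demeter $\ell^2(L^p)$ decoupling for the parabola at scale $E$ with Proposition \ref{pr2}, applied iteratively in each of the $n-1$ transverse directions. The geometry of $\mathfrak{P}^i$ is a product: it is a $2$-dimensional parabolic $E$-neighborhood in the $(\xi_i,\xi_{i+1})$-coordinate plane, Cartesian-multiplied by $\mathbb{R}$ in each of the other $n-1$ coordinate directions. Up to a coordinate permutation of $\mathbb{R}^{n+1}$ (which preserves both the Fourier transform structure and every $L^p$ norm), we may assume $i=1$ and $\mathfrak{P}^i = \mathcal{P}\times\mathbb{R}^{n-1}$, where $\mathcal{P}=\{(\xi,\xi^2+\eta):|\xi|\lesssim 1,\ |\eta|\leq E\}\subset\mathbb{R}^2$.

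First I would invoke parabolic decoupling in $\mathbb{R}^2$ at scale $E$ in the form defined by \eqref{pardef}: for any $g:\mathbb{R}^2\to\mathbb{C}$, partitioning the $\xi$-axis into intervals $I$ of length $E^{1/2}$ and writing $\theta_0 = (I\times\mathbb{R})\cap\mathcal{P}$, one has
\[
\|P_{\mathcal{P}}g\|_p \;\leq\; \Dec(E)\,\Big(\sum_{\theta_0}\|P_{\theta_0}g\|_p^{2}\Big)^{1/2}.
\]
This is the input black box; the decoupling constant $\Dec(E)$ is exactly the quantity appearing in \eqref{cyl}.

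Next I would apply Proposition \ref{pr2} to lift this inequality from $\mathbb{R}^2$ to $\mathbb{R}^{n+1}$. Taking $S=\mathcal{P}$ and $S_i=\theta_0$ in that proposition and applying it once yields a decoupling for $\mathcal{P}\times\mathbb{R}$ with caps $\theta_0\times\mathbb{R}$ and the same constant $\Dec(E)$. Iterating Proposition \ref{pr2} a total of $n-1$ times introduces one extra $\mathbb{R}$ factor at each step while preserving the constant. After reversing the coordinate permutation, the resulting caps are precisely the slabs $\theta = \mathbb{R}\times\cdots\times I\times\cdots\times\mathbb{R}\ \cap\ \mathfrak{P}^i$ appearing in the statement, and the inequality obtained is exactly \eqref{cyl}.

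There is no real obstacle here: the substantive content is entirely outsourced to the $2$-dimensional Bourgain--Demeter theorem, and Proposition \ref{pr2} is tailor-made for this lift, its proof being a routine consequence of Fubini's theorem and Minkowski's integral inequality. The only point requiring minimal care is verifying that the iterated application of Proposition \ref{pr2} does not accumulate constants, which is immediate from the statement since the decoupling inequality at each stage carries the same $C$.
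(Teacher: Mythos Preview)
Your argument is correct and matches the paper's approach exactly: the paper simply states that Corollary \ref{co1} follows by combining Proposition \ref{pr2} with the Bourgain--Demeter $\ell^2$ decoupling for the parabola, and you have spelled out precisely that combination, including the iterated application of Proposition \ref{pr2} across the $n-1$ transverse directions with no accumulation of constants.
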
 

Throughout this paper, we shall like to think collectively of $\eta$ and $E$ as ``error", a measure of how far $\Pf^i$ deviates from the parabolic cylinder mentioned in \eqref{parc}.\\

We immediately obtain partial progress toward Theorem \ref{t0}.

\begin{lem} \label{le1} Let $\delta > 0$. Given $n \geq 3$, partition $[-1,1]$ into intervals $J$ of length $\sim n^{-1}\delta^{1/(n+1)}$ and let $\bar{\D} = \x(J \times R_{\bar{k}_1, \dots, \bar{k}_{n-1}})$. We have $$\|P_{\Nc_\delta(\Ac_{\bar{k}_1, \bar{k}_2, \dots, \bar{k}_{n-1}})} f\|_p \lesssim n^{1/2}\nDec^\P_p (\delta^{2/(n+1)} ) (\sum_{\bar{\D}} \|P_{\Nc_\delta(\bar{\D})} f\|_p^2)^{1/2}.$$ \end{lem}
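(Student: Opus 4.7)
\medskip
\noindent\textbf{Proof plan.} The strategy is to embed $\Nc_\delta(\Ac_{\bar{k}_1, \dots, \bar{k}_{n-1}})$ inside a single parabolic cylinder of the form \eqref{parc} and invoke Corollary \ref{co1}, refining the resulting partition by a factor of $n$ to match the statement. The key observation is that the first two components of the moment parametrization \eqref{1.01} obey the exact identity
$$\x_2(t,s) = t^2 + 2ts_1 + 2s_2 = (t+s_1)^2 + (2s_2 - s_1^2) = \x_1(t,s)^2 + (2s_2 - s_1^2).$$
On $R^\alpha_{\bar{k}_1, \dots, \bar{k}_{n-1}}$ the definition \eqref{1.20} supplies $|s_1| < \delta^{1/(n+1)}$ and $|s_2| < \tfrac{1}{2}\delta^{2/(n+1)}$, so the error satisfies $|2s_2 - s_1^2| < 2\delta^{2/(n+1)}$. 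Because $\phi^{(n+1)}/(n+1)! = \ev_{n+1}$ for the moment curve, $\Nc_\delta$ thickens only in the final coordinate and leaves $(\x_1, \x_2)$ untouched. I would conclude $\Nc_\delta(\Ac_{\bar{k}_1, \dots, \bar{k}_{n-1}}) \subset \Pf^1$ with error parameter $E = 2\delta^{2/(n+1)}$.

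\medskip
\noindent Corollary \ref{co1} then produces, for $f$ Fourier-supported in $\Nc_\delta(\Ac_{\bar{k}_1, \dots, \bar{k}_{n-1}})$,
$$\|f\|_p \leq \nDec_p^{\P}(\delta^{2/(n+1)}) \Bigl(\sum_\th \|P_\th f\|_p^2\Bigr)^{1/2},$$
where the slabs $\th$ partition $\Pf^1$ via $\xi_1 \in I$ for intervals of length $E^{1/2} \sim \delta^{1/(n+1)}$. To match the finer partition by $J$ of length $\sim n^{-1}\delta^{1/(n+1)}$, I would observe that the projection of $\bar{\D} = \x(J \times R^\alpha_{\bar{k}_1, \dots, \bar{k}_{n-1}})$ onto the $\xi_1$-axis is an interval of length at most $|J| + \delta^{1/(n+1)} \lesssim \delta^{1/(n+1)}$. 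Consequently each $\bar{\D}$ meets $O(1)$ slabs while each slab meets $\sim n$ caps $\bar{\D}$. Writing $P_\th f = \sum_{\bar{\D}: \bar{\D}\cap\th\neq\emptyset} P_\th P_{\Nc_\delta(\bar{\D})} f$ (with smooth Fourier projections to secure the $L^p$-boundedness of $P_\th$) and applying the triangle inequality followed by Cauchy-Schwarz over the $\sim n$ nonzero summands produces
$$\|P_\th f\|_p \lesssim n^{1/2}\Bigl(\sum_{\bar{\D}: \bar{\D} \cap \th \neq \emptyset}\|P_{\Nc_\delta(\bar{\D})} f\|_p^2\Bigr)^{1/2}.$$
Summing in $\ell^2$ over $\th$ with an $O(1)$ overcount and inserting into the cylindrical decoupling yields the lemma.

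\medskip
\noindent The principal difficulty lies in Step 1, the parabolic cylinder containment. The pair of coordinates $(\x_1, \x_2)$ is forced by degree counting: since $\x_i$ has degree $i$ in $t$, a parabolic relation $\x_{i+1} \approx \x_i^2$ can only hold when $i+1 = 2i$, i.e., $i=1$. The matching of the error scale $\delta^{2/(n+1)}$ to the coordinate scales $|s_1|^2$ and $|s_2|$ on the smallest region $\bar{k}_1, \dots, \bar{k}_{n-1}$ is precisely what makes the cylindrical decoupling yield the constant $\nDec_p^{\P}(\delta^{2/(n+1)})$ rather than a larger one. Once this containment is in place, the rest is a standard combination of cylindrical decoupling with a Cauchy-Schwarz refinement, with the $n^{1/2}$ factor accounting cleanly for the subdivision from slab length $\sim \delta^{1/(n+1)}$ down to $n^{-1}\delta^{1/(n+1)}$.
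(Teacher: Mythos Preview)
Your proposal is correct and follows essentially the same route as the paper: embed $\Nc_\delta(\Ac_{\bar k_1,\dots,\bar k_{n-1}})$ in the parabolic cylinder $\Pf^1$ via the identity $\x_2=\x_1^2+(2s_2-s_1^2)$ with $|2s_2-s_1^2|\lesssim\delta^{2/(n+1)}$, apply Corollary~\ref{co1}, and then pass from the $\xi_1$-slabs $\th$ to the caps $\bar\D$ by the $O(1)$-versus-$O(n)$ overlap count combined with triangle/Cauchy--Schwarz, which produces the $n^{1/2}$ factor. One minor caveat: your degree-counting remark that $\x_{i+1}\approx\x_i^2$ ``can only hold when $i=1$'' is not a general obstruction---the paper exploits $\Pf^i$ for larger $i$ later after suitable rescalings---but this does not affect the argument for the present lemma.
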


\begin{proof} Let us abbreviate $\Ac = \Ac_{\bar{k}_1, \dots, \bar{k}_{n-1}}$. Recall that within $\Ac$, $$0 \leq s_j \leq (1/j!) \delta^{j/(n+1)}$$ for each $j$. In particular, $$(t+s_1)^2 - (t^2 + 2ts_1 + 2s_2) = s_1^2 - 2s_2 \leq 2\delta^{2/(n+1)} $$ throughout $\Ac$. In other words, $\Ac$ lies within the parabolic cylindrical neighborhood $\mathfrak{P}^1$ with error $E = 4\delta^{2/(n+1)}$, implying that $$P_{\Nc_\delta(\Ac)} f = P_{\Pf^1} (P_{\Nc_\delta(\Ac)} f).$$ Thus, Corollary \ref{co1} enables us to take a decoupling partition $\Pc_\delta'$ of $\Ac$ that merely dissects the $\xi_1$ axis into segments of length $2\delta^{1/(n+1)}$ : \begin{equation} \label{1.31} \|P_{\Nc_\delta(\Ac)} f\|_p \Dec_p^\P \big(\delta^{2/(n+1)} \big) ( \sum_{\th} \|P_{\th \cap \Nc_\delta(\Ac)} f\|_p^2)^{1/2}. \end{equation} 

Obtaining $\Pc_\delta$ from $\Pc_\delta'$ as stated above occurs from the following observation. Each $\bar{\D}$ is contained in an interval of the form $[b, b + 3\delta^{1/(n+1)}] \times \R^n$ where $b$ shifts by $n^{-1}\delta^{1/(n+1)}$ among consecutive $\bar{\D}$. Therefore, $\bar{\D}$ intersects at most three $\th$, while each $\th$ in turn intersects at most $5n$ $\bar{\D}$. 

Consequently, standard Fourier projection theory applies, since the $\th$ are rectangular: \\ \begin{equation} \label{1.32} \|P_{\th \cap \Nc_\delta(\Ac)} f\|_p \leq (5n)^{1/2} (\sum_{\bD_ \cap \th \ne \emptyset} \|P_{\Nc_\delta(\bD) \cap \th} f\|_p^2)^{1/2} \lesssim_p (5n)^{1/2}(\sum_{\bD \cap \th \ne \emptyset} \|P_{\Nc_\delta(\bD)} f\|^2_p)^{1/2} \end{equation} where the triangle inequality and H\"older's inequality are used initially. Applying \eqref{1.32} to each term on the right side of \eqref{1.31} and counting the possible redundancy of terms confirms $$\|P_{\Nc_\delta(\Ac)} f\|_p \lesssim n^{1/2}\Dec_p^\P(\delta^{2/(n+1)}) (\sum_{\bD \subset \Ac} \|P_{\Nc_\delta(\bD)} f\|_p^2)^{1/2}.$$ 
\end{proof}

\begin{re} In the next section, we prove the existence of a linear map that preserves the standard basis vector $e_{n+1}$ and maps the tangent space of $\M^n$ at some $\x(t,s)$ to the tangent space at $\x(0,s)$. Thus, we see that $\bar{\D}$ in the above theorem is flat since $$|t| \leq n^{-1}\delta^{1/(n+1)}; |s_j| \leq (1/j!) \delta^{j/(n+1)} \quad \Longrightarrow   \quad |t^{n+1} + \sum_{j=1}^{n-1} s_j (n+1)_j t^{n+1 - j}| \leq $$ $$\delta \sum_{j=2}^{n+1} \binom{n+1}{j} (1/n)^j \sim \delta.$$
The reader will recognize the sum in the last line as $$(1+1/n)^{n+1} - 1 - (n+1)/n,$$ so the equivalence follows by an application of Taylor's theorem and the identity $e = \lim_{ n} (1 + 1/n)^n$.
\end{re}

\section{Translation invariance of $L^p$ norm} \label{transl}

The construction of the decoupling partition for the remaining $\Ac^\a_{k_1, \dots, k_{n-1}}$ will involve many scale-based iterations of cylindrical decoupling. We shall want to treat the lengths of each successive decoupling cap $\D^{(j)}$ thus obtained as self-improving. For this, it will be key to have some mechanism for redescribing the manifold $\M$ from the perspective of $\D^{(j)}$. Our chosen device is a linear map that rearranges the first $n+1$ derivatives of $\phi$ as the standard basis of $\R^{n+1}$ with a slight rescaling of each component.

For this section, we specialize $\phi$ to be the moment curve $$\phi(t) = (t, t^2, \dots, t^{n+1}).$$ As well, we shall modify the parametrization $\x$ so that $\phi$ carries a linear coefficient variable also: $$\x_{ext}(t,s_0, s_1, \dots, s_{n-1}) = s_0 \phi(t) + s_1 \phi'(t) + \cdots + s_{n-1}\phi^{(n-1)}(t).$$ Fixing $s_0 = 1$ returns the original parametrization $\x$. In fact, note that $$s_0^{-1}\x_{ext}(t, s_0, s_1, \dots, s_{n-1}) = \x(t, s_1/s_0, \dots, s_{n-1}/s_0).$$ Thus, we clarify that the modification $\x_{ext}$ is made only for technical reasons. It facilitates a broader form of Lemma \ref{l2} below that will be useful in later proofs. 

Fix $t_0, s_0 \in [-1,1]$. Define $\A: \R^{n+1} \rightarrow \R^{n+1}$ by \begin{equation} \label{defA} \A(\xi) = s_0\phi(t_0) + \sum_{j=1}^{n+1} \frac{\phi^{(j)}(t_0)}{j!}\xi_j.\end{equation} Observe that the matrix $\L$ whose columns are given by $(1/j!)\phi^{(j)}(t_0)$ is lower triangular with ones along the diagonal. Thus, $\det(\L) = 1.$

\begin{lem} \label{l2} \begin{equation} \label{2.1} \x_{ext}(t+t_0, s_0, \dots, s_{n-1}) = \A(\x_{ext}(t, s_0, \dots, s_{n-1})).\end{equation} As well, the function $f_{t_0}$ given by $$\hat{f_{t_0}} = \hat{f} \circ \A$$ satisfies \begin{equation} \label{2.2} \|f_{t_0}\|_p = \|f\|_p \end{equation} for all $p $. \end{lem}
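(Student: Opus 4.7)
The plan is to handle (2.1) and (2.2) independently, as they rely on different tools: a direct polynomial/Taylor computation for (2.1), and an affine change of variables on the Fourier side for (2.2).

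For (2.1), I would verify the identity component by component. The key observation is that $[\phi^{(k)}(t_0)]_m = (m)_k t_0^{m-k}$ for $1 \leq m \leq n+1$ (with the convention that $(m)_k = 0$ for $k > m$). Thus the $m$-th component of $\A(\xi)$ reads
\begin{equation*}
  [\A(\xi)]_m = s_0 t_0^m + \sum_{j=1}^{m} \binom{m}{j} t_0^{m-j}\, \xi_j,
\end{equation*}
while the $m$-th component of $\x_{ext}(t, s_0, \dots, s_{n-1})$ is
\begin{equation*}
  [\x_{ext}(t, s_0, \dots, s_{n-1})]_m = \sum_{k=0}^{n-1} s_k (m)_k t^{m-k}.
\end{equation*}
Substituting the second expression into the first, interchanging the order of summation, and collecting terms of the form $s_k \, (m)_k \, t^i t_0^{m-k-i}$, the inner sum reduces via the binomial theorem to $(m)_k (t+t_0)^{m-k}$. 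The leftover $s_0 t_0^m$ coming from the translation part of $\A$ combines with the $k=0$ contribution $s_0[(t+t_0)^m - t_0^m]$ (the subtraction appearing because the $\A$ sum begins at $j=1$) to give exactly $s_0 (t+t_0)^m$. This recovers $[\x_{ext}(t+t_0, s_0, \dots, s_{n-1})]_m$ and completes the identity. Equivalently, one can view this as the Taylor expansion of each $\phi^{(k)}(t+t_0)$ about $t_0$; since $\phi$ is polynomial of degree $n+1$, the Taylor sums terminate and regroup into the linear combination dictated by $\A$.

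For (2.2), I would write $\A(\xi) = \L \xi + v_0$, with $v_0 = s_0 \phi(t_0)$ and $\L$ the triangular matrix of columns $(1/j!)\phi^{(j)}(t_0)$ satisfying $\det \L = 1$ as noted just before the lemma. By the Fourier inversion formula and the change of variables $\eta = \L\xi + v_0$ (whose Jacobian is $1$),
\begin{equation*}
  f_{t_0}(x) = \int \hat f(\L\xi + v_0)\, e(x\cdot \xi)\, d\xi = e(-x \cdot \L^{-1}v_0)\, f(\L^{-T}x),
\end{equation*}
so $|f_{t_0}(x)| = |f(\L^{-T}x)|$. Another unimodular change of variables $y = \L^{-T}x$ gives $\|f_{t_0}\|_p = \|f\|_p$ for every $p$.

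I do not expect a real obstacle; the only potential pitfall is the bookkeeping in (2.1), namely confirming that the single $s_0 t_0^m$ generated by the affine shift and the missing $j=0$ term reconcile to produce the correct $s_0(t+t_0)^m$. Otherwise everything is forced by the polynomial structure of the moment curve and by $\det \L = 1$.
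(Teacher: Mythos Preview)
Your proposal is correct and follows essentially the same approach as the paper: a component-by-component binomial computation for \eqref{2.1} and a unimodular affine change of variables for \eqref{2.2}. The only cosmetic difference is that the paper starts from the $i$-th component of $\x_{ext}(t+t_0,\dots)$ and regroups by powers of $t_0$, whereas you start from $\A(\x_{ext}(t,\dots))$ and regroup by the index $k$; both reduce to the same binomial identity.
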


Note that if $f$ is Fourier supported in some neighborhood $\Nc_\delta(\D)$ of a cap $\D = \x_{ext}([t_0, t_1] \times \{\bar{s}\} \times J)$, $\bar{s}$ any fixed positive number, then (by change of variables on the frequency side) the Fourier support of $f_{t_0}$ is the $\delta$-neighborhood of the cap $\x_{ext}([0, t_1 - t_0] \times \{\bar{s}\} \times J)$. It is in this sense that we think of the action of $\A$ as translation in the $t$-coordinate. We shall refer to the translation $f_{t_0}$, and the corresponding movement of $\D$, as a relocation to {\em starting position}. 

\begin{proof} 
Note that \eqref{2.2} follows from change of variables combined with the comment following \eqref{defA}. To verify \eqref{2.1}, we shall show that the equality holds for corresponding components. 

Consider the $i$-th component of $\x(t+t_0, s_0, \dots, s_{n-1})$: $$(t+t_0)^is_0 + i(t+t_0)^{i-1}s_1 + \dots + i!(t+t_0)s_{i-1} + i!s_i = \sum_{j=0}^i \frac{i!}{(i-j)!} (t+t_0)^{i-j}s_j,$$ where $s_n = 0 = s_{n+1}.$ Expanding the binomials, we rewrite each summation term as $$\frac{i!}{(i-j)!} s_j\sum_{l=0}^{i-j} {i-j \choose l} t^{i-j-l}t_0^l.$$ Then, we group together the terms that have the same power of $t_0$. Their sum is \begin{equation} \label{2.3} t_0^l(\sum_{j=0}^{i-l} \frac{i!}{(i-j)!} {i-j \choose l} t^{i-j-l}s_j).\end{equation} Simplifying the constant factor in \eqref{2.3}, we may rewrite the expression as \begin{equation} \label{2.4} (\frac{i!}{l!})t_0^l(\sum_{j=0}^{i-l}\frac{1}{(i-j-l)!}t^{i-j-l}s_j) \end{equation} which is exactly the $i$-th component of $((i-l)!)^{-1}\phi^{(i-l)}(t_0)$ multiplied by the $(i-l)$-th component of $\x(t, s_0, \dots, s_{n-1}).$ The summation of \eqref{2.4} over all integral $0 \leq l \leq i$ is then the sum of all nonzero $i$-th components of the terms in \eqref{defA} ($\x$ being precomposed by $\xi_l$). The proof is complete.

\end{proof}

\section{Intermediate decouplings} \label{s4}

In this section, we develop terminology that will aid us in later exposition. The goal is to develop a theory around parabolic cylindrical decoupling that enables us to iteratively decouple unto increasingly smaller caps. This is the core ingredient of the proof of Theorem \ref{t1}.  However, some work must be done to make Corollary \ref{co1} relevant to the decoupling theory of $\M^n$. First, we need to obtain control over the error term $E$, one that renders $E$ scale-dependent. In this way, using Lemma \ref{l2}, diminishing the size of $t$ via prior decouplings would yield increasingly smaller $E$, allowing us to reduce $t$  to the desired smallest scale. This iterative engine may stall though if we are not careful. Corollary \ref{co1} provides a decoupling partition by sets that are rectilinear, not the curved caps that comprise $\M^n$ which are desired. Securing the latter, as we have seen before, is related to the overlap that occurs between the rectilinear and curved sets.  Thus, we arrange a setting where the overlap is minimal, so that an appropriate reformulation of Corollary \ref{co1} holds. \\

Let $S \subseteq R^\alpha_{k_1, \dots, k_{n-1}}$, and consider the corresponding cap \begin{equation} \label{4.00} \D = \x([0, T] \times S), \end{equation} $T \leq 1$. Given an appropriate partition by subintervals $\{J_j\}_{j \in \Jc}$ of $[0, T]$, we aim for a decoupling that partitions $\D$ into smaller sets $\D_j$ \begin{equation} \label{4.00a} \D_j = \x(J_j \times S).\end{equation} We call $J_j$ the {\em $t$-interval} of $\D_j$, and its length the $t$-{\em length} of $\D_j$. 

We shall be concise with notation and use $\D$ also to represent caps mapped by the extended parametrization \begin{equation} \label{4.00e} \D = \x_{ext}([0,T] \times \{a\} \times S).\end{equation} Throughout the sequel, $a$ will always have magnitude less than 1, and thus the proofs that follow will indeed hold for \eqref{4.00} and \eqref{4.00e} alike.

In general, by direct computation, $\D \subset \Pf^i(t)$ where $\Pf^i(t)$ is defined as in Corollary \ref{co1} but with $\eta$ now a function of $t$: \begin{equation} \label{4.0} \eta(t) = (\xi_{i+1} \circ \x)(t,s) - (\xi_i \circ \x)^2(t,s) = O(C_i.t^m) + O(D_i\Lambda) \end{equation} for fixed values $(m, \Lambda) \in \N \times [\delta, 1)$ and some constants $C_i, D_i$ dependent on $i$. The constant $\Lambda$ should be thought of as the error value desired, while the $O(C_i t^m)$ term is the discrepancy which we shall try to reduce via iterative decoupling. The exact form of \eqref{4.0} of course involves rather large coefficients of each power of $t$, and both their number and magnitude grow with $n$. By use of a rescaling argument, we mitigate the impact of this growth in the next subsection. 

We shall say more about \eqref{4.0} in Subsection \ref{ss4.1}, where in particular $m$ shall be determined to be 2. In the meantime, we prepare for the exposition there by adapting Corollary \ref{co1} to $\M^n$.

\subsection{The inductive step for parabolic cylindrical decoupling on $\M^n$} \label{ss4.0} We aim to develop the decoupling for $\M^n$ using that for parabolic cylinders $\Pf^i$. This works directly when $i=1$. In fact, the $t$ and $\xi_1$ coordinates span similarly sized intervals, so long as the scale exceeds the maximum size of $s_1$. Thus, an entirely natural extension of the argument proving Lemma \ref{le1} yields

\begin{lem} \label{lem4.0} Assume $\D \subset \M^n$ is contained in $\Pf^1(E)$ for $E \in \big[\max_\D |s_1|,1\big]$. Then,  \begin{equation} \label{4.111'} \|P_{\Nc_\delta(\D)} f\|_p \lesssim  \nDec_p^\P (E ) (\sum_{\D' \subset \D} \|P_{\Nc_\delta(\D')} f\|_p^2)^{1/2} \end{equation} where $\D$ is partitioned into caps $\D'$ of $t$-length $E^{1/2}$. \end{lem}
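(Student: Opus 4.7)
The plan is to follow the strategy of Lemma \ref{le1} essentially verbatim, with one key bookkeeping improvement that exploits the new hypothesis $E \geq \max_\D |s_1|$ to eliminate the $n^{1/2}$ factor. I would proceed in four stages: apply cylindrical decoupling to $\D$, translate from rectilinear slabs to the curvilinear caps $\D'$, verify bounded overlap, and assemble.

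Since by hypothesis $\D \subset \Pf^1(E)$, Corollary \ref{co1} applies directly. This furnishes a decomposition of $\Pf^1$ into slabs $\th = I \times \R^n$, with $I$ an interval of length $E^{1/2}$ on the $\xi_1$-axis, together with the inequality
\[
\|P_{\Nc_\delta(\D)} f\|_p \leq \Dec_p^\P(E) \Big(\sum_\th \|P_{\th \cap \Nc_\delta(\D)} f\|_p^2\Big)^{1/2}.
\]
The first genuine task is to compare the slab decomposition to the curvilinear decomposition into caps $\D' = \x(J_j \times S)$ of $t$-length $E^{1/2}$.

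Here the new hypothesis enters. From \eqref{1.01} we read off $\xi_1 \circ \x(t,s) = t + s_1$. Since $E \leq 1$ implies $E \leq E^{1/2}$, the hypothesis $\max_\D |s_1| \leq E$ gives that the $\xi_1$-range of a single $\D'$ is at most $E^{1/2} + 2E \leq 3 E^{1/2}$, so each $\D'$ meets $O(1)$ many slabs $\th$. Conversely, a slab $\th$ restricts $t$ to an interval of length $E^{1/2}$ for each fixed $s_1$; as $s_1$ varies over its range of size at most $2E \leq 2 E^{1/2}$, the admissible $t$-values live in an interval of length $\lesssim E^{1/2}$, hence touch $O(1)$ many $J_j$. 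Both multiplicities are bounded by a universal constant independent of $n$.

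With this $O(1)$ overlap in hand, the triangle inequality, H\"older's inequality with $O(1)$ terms, and the standard $L^p$-boundedness of Fourier projection onto the rectangular slab $\th$ combine to give
\[
\|P_{\th \cap \Nc_\delta(\D)} f\|_p \lesssim \Big(\sum_{\D' \cap \th \ne \emptyset} \|P_{\Nc_\delta(\D')} f\|_p^2\Big)^{1/2}.
\]
Substituting into the display above and rearranging the double sum (absorbing only the universal multiplicity constant) yields \eqref{4.111'}. The only step requiring genuine care is the overlap count in the third paragraph, where the hypothesis $E \geq \max_\D |s_1|$ is exactly the condition that matches the $s_1$-range to the slab width $E^{1/2}$; this is precisely what is missing in the proof of Lemma \ref{le1}, where the $t$-partition scale $n^{-1}\delta^{1/(n+1)}$ is much finer than the slab width $\delta^{1/(n+1)}$, forcing the $n^{1/2}$ loss. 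Everything else is routine cylindrical decoupling plus bookkeeping.
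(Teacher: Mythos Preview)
The proposal is correct and follows exactly the approach the paper indicates: the paper does not write out a separate proof for this lemma but simply declares it to be ``an entirely natural extension of the argument proving Lemma~\ref{le1}'', noting that ``the $t$ and $\xi_1$ coordinates span similarly sized intervals, so long as the scale exceeds the maximum size of $s_1$''. Your write-up is precisely that extension, and your observation that the hypothesis $E \geq \max_\D |s_1|$ is what removes the $n^{1/2}$ loss (by matching the $s_1$-drift to the slab width $E^{1/2}$) is exactly the intended point.
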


In the general setting, the rectilinear caps of Corollary \ref{co1} do not adhere to the geometry of $\M^n$. However, we may artificially fix this if we decouple with small jumps between successive scales. We draw our inspiration here from the proof of decoupling for the cone given in Section 12.2 of \cite{C}.

The jump between scales will be a factor of $\delta^{\e/2}$. Let us point out to the reader that we may always partition the values of $t$ or $s_i$ into subintervals of length $\delta^\e$. Only using the triangle inequality and H\"older's inequality, we incur just $\delta^\e$ loss in so doing: \begin{equation} \label{4.0a} \|P_{\D} f\|_p \leq (\delta^{-\e/2})^n (\sum_{\D' \subset \D} \|P_{\D'} f\|_p^2)^{1/2}.\end{equation} \\ 

Fix $i \geq 2$. In the $n$-dimensional setting, we need a standard formula for the $i$-th component of points in $\Delta$, and so we restrict $t$ and the relevant coefficient $s_{i-1}$ correspondingly. Let $\Ic \subset [(4(i-1)!)^{-1}, (i-1)!^{-1}]$ be an interval of length $\delta^\e/(30(i-1)!)$ containing some number $c_i$ to be determined in the next subsection. Assume that $\D$ satisfies \\ \begin{equation} \label{4.1} \left\{\begin{array}{rl} |s_{i-1}| \in \Ic \\ \\ |s_j| \leq \frac{2}{j!},&  \quad j = 1, \dots, i-2 \\ \\  T \leq \frac{\delta^\e}{30(i+1)} \end{array} \right. \end{equation} \\ for all $s \in S$. We have justified the partitioning by $\delta^\e$ in \eqref{4.0a}, and the size bounds on $s_i$ shall be arranged in Section \ref{sgen} by a rescaling argument. We shall refer to the system of inequalities comprising \eqref{4.1} as condition ($\star$) and to caps satisfying condition ($\star$) as {\em standardized} caps. The second inequality in \eqref{4.1} pertains properly to the $n$-dimensional setting, where we might otherwise accumulate factors of order $n!$ during the decouplings outlined in Section \ref{sgen}. For low-dimensional moment surfaces, $\Ic$ may be slightly larger (of length $\sim \delta^\e                                                                                                                                                                                                                                                                                                                                                                                                                                                                                                                                                                                                                                                                                                                                                                                         $) and the inequalities \begin{equation} \left\{\begin{array}{rl} |s_{i-1}| \in \Ic \\ \\ \max \limits_{j \leq i-2} |s_j| \lesssim 1\\ \\  T \leq \delta^\e \end{array} \right.\end{equation} are taken instead. Such will characterize standardized caps in low dimensions. 

The inequalities specified in condition ($\star$) enable the following reduction.

\begin{lem} \label{lem4.1} Let $m \leq M \leq i + 1$ be positive integers. Suppose that \\ \begin{itemize} \item $|s_j| \leq 1/j!$ \qquad \; for $j = 1, \dots, m$ \\ \item $|t| \leq (2(i+1))^{-1}$. \\ \end{itemize} Then, \begin{equation} \label{4.1a} |t^M + \sum_{j=1}^{m} (M)_j s_j t^{M-j}| \leq 2(M|t|)^{M-m}. \end{equation} \end{lem}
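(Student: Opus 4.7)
The plan is to reduce the bound to a single geometric/exponential series estimate that exploits the smallness of $M|t|$ built into the hypothesis $|t| \le (2(i+1))^{-1}$.

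First, adopting the convention $s_0 = 1$ (so that $t^M = (M)_0 s_0 t^M$), the triangle inequality combined with $|s_j| \le 1/j!$ and the identity $(M)_j/j! = \binom{M}{j}$ gives
\begin{equation*}
\Big| t^M + \sum_{j=1}^m (M)_j s_j t^{M-j} \Big| \;\le\; \sum_{j=0}^m \binom{M}{j}\,|t|^{M-j}.
\end{equation*}
Next, I would use the \emph{symmetric} form of the standard binomial bound, namely $\binom{M}{j} = \binom{M}{M-j} \le M^{M-j}/(M-j)!$, rather than the naive $M^j/j!$. Substituting this in, the right-hand side is bounded by $\sum_{j=0}^m (M|t|)^{M-j}/(M-j)!$. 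Reindexing with $\ell = M-j$ and then $p = \ell - (M-m)$, and pulling out $(M|t|)^{M-m}$, the sum takes the form
\begin{equation*}
(M|t|)^{M-m} \sum_{p=0}^m \frac{(M|t|)^p}{(M-m+p)!}.
\end{equation*}
Since $M-m \ge 0$, we have $(M-m+p)! \ge p!$, so the remaining sum is majorized by $\sum_{p=0}^{\infty}(M|t|)^p/p! = e^{M|t|}$.

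To close, I would combine $M \le i+1$ with $|t| \le (2(i+1))^{-1}$ to conclude $M|t| \le 1/2$, whence $e^{M|t|} \le e^{1/2} < 2$. This yields the claimed bound $2(M|t|)^{M-m}$.

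The only genuine subtlety is the choice of binomial bound in the second step. The naive estimate $\binom{M}{j}\le M^j/j!$ would only isolate a factor $(M|t|)^j$, which pairs with $|t|^{M-j}$ in the wrong way and fails to let the smallness of $|t|$ accumulate into $(M|t|)^{M-m}$. Using $\binom{M}{M-j}$ instead aligns the exponents so that every term in the sum carries at least the factor $(M|t|)^{M-m}$, after which the entire estimate collapses to the elementary observation that $M|t| \le 1/2$ makes the residual series strictly smaller than $2$.
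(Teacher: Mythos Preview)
Your proof is correct and follows essentially the same route as the paper: both bound $\binom{M}{j}|t|^{M-j}$ by (a multiple of) $(M|t|)^{M-j}$ via the symmetric binomial estimate and then use $M|t|\le 1/2$ to sum the tail. The only cosmetic difference is that the paper drops the $1/(M-j)!$ and sums a geometric series to get the factor $2$, whereas you keep it and sum the exponential series to get $e^{1/2}<2$.
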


\begin{proof} 

Note that $|s_j| \leq 1/(j!)$ implies that $$|(M)_j s_j| \leq \binom{M}{j}$$ for all $j = 1, \dots, m$. For our purposes, this fact is most useful for the terms in $$\sum_{j=1}^{m} (M)_j s_j t^{M-j}$$ with $j > M - j$. It allows us to compare the sum to a geometric series: $$|t^M| + \sum_{j=1}^{m} (M)_j |s_j t^{M-j}| \; \leq \sum_{j=0}^{m} \binom{M}{j} |t^{M - j}|  \; \leq \sum_{j=0}^{m} (M|t|)^{M-j} \; \leq 2(M|t|)^{M-m}. \quad \qedhere$$ 

\end{proof}

Condition ($\star$) encodes critical geometric information.  It guarantees that at reduction scale $\delta^\e$, the caps $\th^j$ successively provided by Corollary \ref{co1} have bounded overlap with similarly sized caps $\D^j$ described as in \eqref{4.00}. 

\begin{lem} \label{lem4.2} Let $\D$ be in starting position. Assume $\D$ is contained in a set $\Pf^i = \Pf^i(E)$ with $E = i^2\delta^\e T^2$, i.e. \begin{equation} \label{4.1b} \D \subset \{\xi \in \R^{n+1}: \   |\xi_i| \lesssim 1, \; |\xi_{i+1} - \xi_i^2| \leq \delta^\e(iT)^2\}. \end{equation} There exists a fixed constant $\k$ (independent of $n, \epsilon$, and $\delta$) such that if $\D$ satisfies condition ($\star$), then \begin{equation} \label{4.111} \|P_{\Nc_\delta(\D)} f\|_p \leq \k \nDec_p^\P (E ) (\sum_{\D' \subset \D} \|P_{\Nc_\delta(\D')} f\|_p^2)^{1/2} \end{equation} where $\D$ is partitioned into caps $\D'$ of $t$-length $i^{-1}E^{1/2}$. \end{lem}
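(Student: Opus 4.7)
The plan is to combine Corollary \ref{co1} with a bounded-overlap transfer argument: first, parabolic cylindrical decoupling slices $\D$ into $\xi_i$-slabs $\th$; second, those slabs are reindexed as curvilinear caps $\D'$ of $t$-length $i^{-1}E^{1/2}$. Condition ($\star$) enters precisely to make the associated overlap counts bounded by absolute constants.

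Since $\D \subset \Pf^i(E)$ and $P_{\Nc_\delta(\D)} f$ has Fourier support in $\Pf^i(E)$, Corollary \ref{co1} yields
\begin{equation*}
\|P_{\Nc_\delta(\D)} f\|_p \leq \Dec_p^\P(E) \Big(\sum_\th \|P_{\th \cap \Nc_\delta(\D)} f\|_p^2\Big)^{1/2},
\end{equation*}
where the slabs $\th$ partition the $\xi_i$-axis into intervals of length $E^{1/2} = iT\delta^{\e/2}$.

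The core geometric claim is that (i) each slab $\th$ meets $O(1)$ of the caps $\D'$ and (ii) each $\D'$ meets $O(1)$ slabs $\th$. To establish these I use $\partial_t x_i(t, s) = i\, x_{i-1}(t, s)$. Applying Lemma \ref{lem4.1} with $M=i-1,\ m=i-2$ gives $x_{i-1}(t,s) = (i-1)!\,s_{i-1} + O((i-1)t)$; condition ($\star$) pins $(i-1)!|s_{i-1}| \in [1/4, 1]$ and forces $(i-1)t \leq 1/30$, so $x_{i-1}$ is a positive absolute constant on $[0,T] \times S$. Hence $\partial_t x_i \sim i$ uniformly, and the $t$-length $T' = i^{-1}E^{1/2}$ of each $\D'$ produces a $\xi_i$-displacement of order $E^{1/2}$ between consecutive $\D'$s at any fixed $s$. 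The remaining task is to bound the $s$-variation of $x_i$ at fixed $t$ by $O(E^{1/2})$: for $j \leq i-2$, the bound $|s_j| \leq 2/j!$ together with another application of Lemma \ref{lem4.1} controls $(i)_j t^{i-j} \Delta s_j$; for $j = i-1$ the short length $\delta^\e/(30(i-1)!)$ of $\Ic$ contributes negligibly; and for $j = i$ (the only remaining index for which $s_j$ appears in $x_i$), the implicit constraint $|x_{i+1} - x_i^2| \leq E$ from $\D \subset \Pf^i(E)$ restricts $s_i$ to an interval of diameter $O(\delta^{\e/2} T)$, whose $x_i$-contribution $i! \cdot \delta^{\e/2} T \sim E^{1/2}$ is of the correct order.

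Once (i) and (ii) are in hand, a standard triangle/Cauchy--Schwarz argument (cf.\ the proof of Lemma \ref{le1}) combined with $L^p$-boundedness of Fourier multipliers on the approximately rectangular pieces $\th \cap \Nc_\delta(\D')$ gives
\begin{equation*}
\|P_{\th \cap \Nc_\delta(\D)} f\|_p \lesssim C_1^{1/2} \Big(\sum_{\D' \cap \th \neq \emptyset} \|P_{\Nc_\delta(\D')} f\|_p^2\Big)^{1/2}.
\end{equation*}
Summing in $\th$ and using (ii) to count each $\D'$ at most $C_2$ times collapses the double sum into a single sum over $\D'$, producing the desired inequality with $\k = O((C_1 C_2)^{1/2})$ an absolute constant. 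The main obstacle will be the $s$-variation analysis supporting (i) and (ii), specifically the $s_i$-contribution to $x_i$: since $s_i$ is not directly restricted by condition ($\star$), the argument must extract the implicit bound from $\D \subset \Pf^i(E)$. Once that geometric bookkeeping is complete the rest is routine.
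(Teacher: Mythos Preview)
Your approach is correct and mirrors the paper's: apply Corollary \ref{co1} to obtain $\xi_i$-slabs $\th$, then transfer to the curvilinear caps $\D'$ via a bounded-overlap argument that uses Lemma \ref{lem4.1} for the $j\le i-2$ terms, the short length of $\Ic$ for $j=i-1$, and the containment $\D\subset\Pf^i(E)$ for $j=i$ (the paper records these as \eqref{4.116}, \eqref{4.115}, \eqref{4.114} and builds explicit intervals $I_{\D'}$ satisfying the shift condition \eqref{4.1c} rather than invoking $\partial_t x_i\sim i$, but the content is identical). One arithmetic correction in your $s_i$ step: the chain ``$s_i$ has diameter $O(\delta^{\e/2}T)$, hence $x_i$-contribution $i!\cdot\delta^{\e/2}T\sim E^{1/2}$'' overshoots by a factor of $(i-1)!$; the clean route, as in the paper's \eqref{4.113}--\eqref{4.114}, is to evaluate the constraint $|\xi_{i+1}-\xi_i^2|\le E$ at $t=0$ to obtain $|i!s_i|\le 2E^{1/2}$ directly, which is precisely the needed bound on the $s_i$-contribution to $x_i$.
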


\begin{proof} A priori, Corollary \ref{co1} provides \begin{equation} \label{4.112} \|P_{\Nc_\delta(\D)} f \|_p \leq \Dec_p^\P(E) (\sum_{\th \cap \Nc_\delta(\Delta) \ne \emptyset} \|P_\th f\|_p^2)^{1/2}. \end{equation} Denoting a partition of the $\xi_i$ axis by intervals $I$ of length $E^{1/2} = i\delta^{\e/2}T$, the sets $\th$ have the form $$\th = \R \times \cdots \times \R \times I \times \R \times \cdots \times \R \cap \Pf^i$$ where $I$ is the restriction of the $i$-th component.  It remains to replace the $\th$ in \eqref{4.112} with the caps $\D'$. To simplify the writing, we shall refer to $\R \times \cdots \times \R \times I \times \R \times \cdots \times \R$ also as an interval with length that of $I$. (Whether in the sequel, we mean by $I$ the rectangle $\R \times \cdots \times \R \times I \times \R \times \cdots \times \R$ or the interval contained in $\R$ will be clear from the context.) $J_{\Delta'}$ will denote the $t$-interval associated with $\D'$ in the sense of \eqref{4.00a}.

If $\D \subset \Pf^1(E)$, the argument given in the proof of Lemma \ref{le1} naturally extends here \big(see \eqref{4.114}\big). In fact, inspired by that proof, our strategy for the case $i \geq 2$ is to find intervals $I_{\Delta'} \supset \D'$ of uniform length $10i\delta^{\e/2}T$ adhering to the following monotonicity condition: \begin{equation} \label{4.1c} I_{\Delta'_2} =  I_{\Delta'_1} + ic(\min J_{\Delta'_2} - \min J_{\D'_1}) \qquad \text{ for all } \D'_1, \D'_2 \in \{\D'\}; \  |c| \in [1/2,1] \text{ fixed}. \end{equation} The intervals $\{I_{\D'}\}$ have length 10 times that of $\th$, and so \eqref{4.1c} implies that each $\th$ intersects at most 25 $\D'$. In turn, each $\D'$ certainly intersects at most 25 $\th$ since this exceeds the maximum number required to cover $I_{\D'}$. Thus, once the $I_{\D'}$ are constructed, \eqref{4.111} follows from \eqref{4.112} as in the end of the proof of Lemma \ref{le1}.

Recall that the $i$-th component of $\x(t,s)$ is \begin{equation} \label{4.1d} t^i + \sum_{j=1}^i (i)_j s_j t^{i-j} = t^i + i s_1 t^{i-1} + \cdots + (i!/2)s_{i-2}t^2 + i!s_{i-1}t + i!s_i. \end{equation} Fixing $t=0$ and $s_{i+1} = 2^{-k_{i+1}}$ in \eqref{4.1b} shows that $i!s_i$ throughout $\Delta$ is $O(E^{1/2})$: \begin{equation} \label{4.113}  (i! 2^{-k_i})^2 \leq \max_{c \; \in \; [(i!2^{-k_i})^2, \  (i! 2^{-k_i + 1})^2]} |c - (i+1)!2^{-k_{i+1}}| \leq E. \end{equation} It follows that \begin{equation} \label{4.114} |i!s_i| \leq 2E^{1/2}. \end{equation} By hypothesis, it is clear that \begin{equation} \label{4.115} i!|s_{i-1} - \bar{s}_{i-1}| t \leq i\delta^\e T < E^{1/2}\end{equation} for all $|t| \leq T; s_{i-1}, \bar{s}_{i-1} \in I$. Finally,
according to Lemma \ref{lem4.1}, \begin{equation} \label{4.116} |t^i + \sum_{j=1}^{i-2} (i)_j s_j t^{i-j}| \leq 4(it)^2 \leq i\delta^{\e}T < E^{1/2} \end{equation} for all $t, s_j$ as in \eqref{4.1}.

We define the intervals $I_{\D'} \subset \R$ as follows. Given $J_{\D'} = [b, b + \delta^{\e/2}T]$, we set\\ \begin{equation} \label{4.117} I_{\D'} = [i!\a_{i-1}c_i b - 5E^{1/2}, i!\a_{i-1}c_i b + 5E^{1/2}]. \end{equation} (If $i = 1$, we just define $I_{\D'}$ as in the proof of Lemma \ref{le1}: $I_{\D'} = [b, b+ 3E^{1/2}].$) 

Let $\x(t,s) \in \D'$. By \eqref{4.114}, \eqref{4.115}, and \eqref{4.116}, $$|(t^i + \sum_{j=1}^i (i)_j s_j t^{i-j}) - i!\a_{i-1}c_i b| \leq 3E^{1/2} + i!|s_{i-1}t - \a_{i-1}c_i b| $$ $$\leq 3E^{1/2} + i!|(s_{i-1} - \a_{i-1}c_i)t| + i!c_i |t-b|$$ $$\leq 4E^{1/2} + i\delta^{\e/2}T = 5E^{1/2} $$ \\ since $c_i \in [(2(i -1)!)^{-1}, (i-1)!^{-1}].$ So the infinite cylinder $I_{\D'} \subset \R^{n+1}$ over \eqref{4.117} contains $\D'$.

\end{proof}

\begin{re} We only proved Lemma \ref{lem4.2} in the ``high-dimensional" context where condition ($\star$) restricts the $s_j$ variables also and not just $s_{i-1}, t$. But in low dimensions, we can obtain the critical inequalities \eqref{4.114}, \eqref{4.115}, and \eqref{4.116} more easily, assuming that $``\leq"$ is weakened to $``\lesssim"$ in each. Indeed, the addition of a dimension-dependent constant factor in each inequality is immaterial when the dimension is kept low. \end{re}

\subsection{Parabolic cylindrical decoupling on standardized caps} \label{ss4.1} Let us revisit \eqref{4.0} and give a proper assessment for a standardized cap $\D$. For all $\x(t,s) \in \D$, \begin{equation} \label{4.118} (\xi_{i+1} \circ \x)(t,s) - (\xi_i \circ \x)^2(t,s) = (t^{i+1} + \sum_{j=1}^{i+1} (i+1)_j s_j t^{i+1- j}) - (t^i + \sum_{j=1}^i (i)_j s_j t^{i-j})^2. \end{equation} where we take $s_n = 0=s_{n+1}$ in order to include the cases $i=n-1$ or $i=n$. We compute the coefficient of $t^2$ in \eqref{4.118} exactly: \begin{equation} \label{4.119} ((i+1)_{i-1} - (i)_{i-1}^2s_{i-1})s_{i-1}t^2 = (\frac{(i+1)!}{2(i!)^2} - s_{i-1})(i!)^2 s_{i-1}t^2. \end{equation} If we choose \begin{equation} \label{4.1190} c_i = \frac{(i+1)!}{2(i!)^2} \end{equation} \\ to be the number promised in the context of \eqref{4.1}, then \eqref{4.119} is bounded above by \begin{equation} \label{4.120} (1/30)\delta^\e i^2t^2. \end{equation} 

Concerning all the other terms in \eqref{4.118}, we may apply Lemma \ref{lem4.1} to the sums over $j < i-1$ and retain the rest: $$|\xi_{i+1} \circ \x - (\xi_i \circ \x)^2| \leq 5((i+1)|t|)^3 + (1/30)i^2 \delta^\e t^2 + (i+1)!|s_{i+1}|+ 4(it)^2(|s_{i-1}t| +|s_i|)i!$$ \begin{equation} \label{4.121}  + i!|s_i t|(i+1 + 2i! |s_{i-1}|) + (i!s_i)^2. \end{equation} Using \eqref{4.1}, we can bound $$ 5((i+1)|t|)^3 + 4(it)^2|s_{i-1}t|i! \leq 9(iT)^2(\delta^\e/30).$$ On the other hand, $$i+1 + 2i!|s_{i-1}| \leq 4i.$$ Therefore, \eqref{4.121} is bounded above by \begin{equation} \label{4.122} (9/30)\delta^\e(iT)^2 + 8i|i!s_i |T + (i+1)!|s_{i+1}| + (i!s_i)^2 \end{equation} for large $i \ne n$. If $i=n$, we have shown that \eqref{4.121} is bounded by $\delta^\e (iT)^2$ and so the $\delta$-neighborhood of $\D$ is contained in $\Pf^n(E)$ with \begin{equation} \label{4.12200} E = (2/3) \delta^\e (iT)^2 + \delta. \end{equation}

From our standpoint, the last two terms in \eqref{4.122} are inevitable; most significantly, they cannot be reduced by any $\ell^2$ decoupling. Therefore, these must be the target $t$-lengths to be attained. Thus inspired, we observe that $$T \geq (30/i)(i!|s_i|) \delta^{-\e} \Longrightarrow i|i!s_i| T \leq (1/30)\delta^\e (iT)^2.$$ Having $T \leq 30(i-1)!|s_i|\delta^{-\e}$ is favorable since H\"older's inequality applies here with minimal loss. So we assume that $T$ is larger, in which case \eqref{4.122} and thus \eqref{4.121} are bounded by \begin{equation} \label{4.1220} (2/3)\delta^\e(iT)^2 + (i+1)!2^{1-k_{i+1} } + (i!2^{1-k_i})^2. \end{equation}

\begin{re} For low-dimensional moment surfaces, the above demonstration is even easier. Taking the trivial bounds $|s_j| \leq 2$, we may bound the sum of all terms of order higher than 1 in \eqref{4.118} by $C_i \delta^\e T^2$. The linear terms altogether are bounded by some $D_i |s_i|T$, and we may reason as in the previous paragraph that $$D_i |s_i|T \leq D_i \delta^\e T^2$$ if T is not (almost) at the desired scale. Therefore, we have the analogue of \eqref{4.1220}: there exist constants $C_1, C_2, C_3$ such that \eqref{4.118} is bounded by $$ C_1 \delta^\e T^2 + C_2 |2^{-k_{i+1}}| + C_3 |2^{-k_i}|^2$$ for all small values of $i$. \end{re}

\begin{pr} \label{pr4.1} Assume that $\Nc_\delta(\D)$ is a cap neighborhood of $t$-length $T$ that is standardized relative to the $i$-th component. Let $L$ be determined by \begin{IEEEeqnarray*}{rCls} L &\geq& i^{-1} \max \{((i+1)!2^{-k_{i+1}})^{1/2}, i!2^{-k_i}\} &\qquad if $1 < i < n-1$  \\  L &=&  (n-2)!2^{-k_{n-1}} & \qquad if $i=n-1$\\ L &=& (1/\sqrt{6})n^{-1}\delta^{1/2} & \qquad  if $i=n$.  \end{IEEEeqnarray*} Then $\Nc_\delta(\D) \subset \Pf^i(E)$ where \begin{equation} \label{4.123} E = (2/3)\delta^\e (iT)^2 + 6(iL)^2,\end{equation} provided that $T \geq 30 \delta^{-\e} L$. In all cases, $\D$ may be partitioned for $\ell^2$ decoupling into caps $\D'$ of $t$-length $ L$: \begin{equation} \label{4.1241} \|P_{\Nc_\delta(\D)} f\|_p \leq  (30\delta^{-\e})^{1/2} \big( \k \nDec_p^\P \big( 6i^2L^2 \big) \big)^{2\e^{-1}\log_\delta{L}} \bigg(\sum_{\D' \subset \D} \|P_{\Nc_\delta(\D')} f\|_p^2 \bigg)^{1/2}. \end{equation}  \\

\end{pr}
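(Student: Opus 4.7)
The plan is to split the proof into two steps: first verify the geometric containment $\Nc_\delta(\D) \subset \Pf^i(E)$, and then iterate Lemma~\ref{lem4.2} from scale $T$ down to scale $L$.

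For the containment, the computation \eqref{4.118}--\eqref{4.1220} already shows that on any standardized cap of $t$-length $T$ with $T \geq 30(i-1)!|s_i|\delta^{-\e}$, the quantity $|\xi_{i+1} \circ \x - (\xi_i \circ \x)^2|$ is at most $(2/3)\delta^\e(iT)^2 + (i+1)!\,2^{1-k_{i+1}} + (i!\,2^{1-k_i})^2$. One then observes that in each of the three regimes defining $L$ the last two terms are bounded by $6(iL)^2$: when $1 < i < n-1$ the lower bound on $L$ directly yields $(iL)^2 \geq (i+1)!\,2^{-k_{i+1}}$ and $(iL)^2 \geq (i!\,2^{-k_i})^2$; when $i = n-1$ the vanishing $s_n = 0$ eliminates the first term while $L = (n-2)!\,2^{-k_{n-1}}$ handles the second; and when $i = n$ both extra terms vanish outright and $6(iL)^2 = \delta$ absorbs the intrinsic $\delta$-error from $\Nc_\delta(\D)$, in agreement with \eqref{4.12200}.

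With the containment in hand, I would iterate Lemma~\ref{lem4.2}. Let $T_0 = T$ and let $T_k$ denote the $t$-length at the $k$th step. Before each application, translate the cap back to starting position using Lemma~\ref{l2}; standardization persists because the translation fixes the $s$-coordinates and only resets the $t$-interval to begin at $0$, while the $t$-length strictly decreases. As long as $T_k \geq 30\delta^{-\e}L$, the containment step gives $E_k \leq i^2\delta^\e T_k^2$, so Lemma~\ref{lem4.2} partitions the cap into subcaps of $t$-length $T_{k+1} = i^{-1}E_k^{1/2} \leq \delta^{\e/2}T_k$ at the cost of $\k\,\nDec_p^\P(E_k)$. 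Monotonicity of $\Dec_p^\P$ combined with the uniform lower bound $E_k \geq 6(iL)^2$ yields $\Dec_p^\P(E_k) \leq \Dec_p^\P(6i^2 L^2)$ at every level, so after $N$ steps the total multiplier is $\big(\k\,\nDec_p^\P(6i^2L^2)\big)^N$. Since $T_0 \leq 1$, the stopping criterion $T_N < 30\delta^{-\e}L$ is reached once $N \geq 2\e^{-1}\log_\delta L$ up to lower-order terms, which supplies the exponent stated in \eqref{4.1241}.

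Once the iteration terminates, the residual $t$-interval has length at most $30\delta^{-\e}L$; partition it into pieces of length $L$ by the trivial $\ell^2$ estimate (triangle plus Cauchy--Schwarz) applied to the single $t$-direction, incurring exactly the factor $(30\delta^{-\e})^{1/2}$. The main obstacle, I expect, is the bookkeeping: verifying that condition ($\star$) really does survive every translation back to starting position, and tracking the accumulation of constants so that everything collapses cleanly into $\big(\k\,\nDec_p^\P(6i^2 L^2)\big)^{2\e^{-1}\log_\delta L}$ rather than an exponent inflated by the lower-order terms in $\log_\delta 30$, $\log_\delta T$, and the slack between $E_k$ and $6i^2L^2$.
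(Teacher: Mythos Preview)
Your proposal is correct and follows essentially the same approach as the paper: the containment \eqref{4.123} is taken from the preceding computations in Subsection~\ref{ss4.1}, and the decoupling is obtained by iterating Lemma~\ref{lem4.2} (with translation to starting position via Lemma~\ref{l2} between steps), bounding each factor by monotonicity since $i^2\delta^\e T_k^2 \geq 6(iL)^2$ whenever $T_k \geq 30\delta^{-\e}L$, and then finishing with H\"older to pass from $t$-length $30\delta^{-\e}L$ to $L$. The bookkeeping concern you raise about condition~($\star$) is resolved in the paper by the one-line observation that standardization is inherited by subcaps, so every $\D^{(j)}$ remains standardized.
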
 

Note that inequality \eqref{4.1241} is a true decoupling whenever $L$ exceeds some power of $\delta$, since the decoupling constant is monotonic.

\begin{proof} \eqref{4.123} has already been verified in the preceding exposition. It remains to deduce the $\ell^2$ decoupling when $T$ exceeds $\delta^{-\e}L$. The following argument is an iterative decoupling procedure that utilizes Lemma \ref{lem4.2} at each step. Since $\D$ is standardized, all subsets of $\D$ are standardized as well, so the requirement of Lemma \ref{lem4.2} is maintained throughout. To ease the notation a bit, we shall refer to $\Dec_p^\P(\cdot)$ only as $\Dec(\cdot)$ beginning with this proof and continuing until Section \ref{lastsec}.

We produce the $\ell^2$ decoupling. Initially, $$\Nc_\delta(\D) \subset \Pf^i \big (\delta^\e (iT)^2 \big)$$ by \eqref{4.123}. Applying Lemma \ref{lem4.2} with $E = i^2\delta^\e T^2$, we obtain for the partition of $\D$ by caps $\D^{(1)}$ of $t$-length $\delta^{\e/2}T$: \begin{equation} \label{4.124} \|P_{\Nc_\delta(\D)} f\|_p \leq  \k \Dec \big(\delta^\e (iT)^2 \big) \big(\sum_{\D^{(1)} \subset \D} \|P_{\Nc_\delta(\D^{(1)})} f\|_p^2 \big)^{1/2}. \end{equation}

Using Lemma \ref{l2}, we translate each $\D^{(1)}$ to starting position, i.e. $t \in [0, \delta^{\e/2}T]$, and deal with each cap individually. Over $\D^{(1)}$, \eqref{4.123} is bounded by $i^2\delta^{\e}(\delta^{\e/2}T)^2$, (provided $\delta^{\e/2}T \geq 30\delta^{-\e}L$), so Lemma \ref{lem4.2} provides \begin{equation} \label{4.125} \|P_{\Nc_\delta(\D^{(1)})} f\|_p \leq  \k \Dec \big(\delta^{2\e}(iT)^2 \big) \big(\sum_{\D^{(2)} \subset \D^{(1)}} \|P_{\Nc_\delta(\D^{(2)})} f\|_p^2 \big)^{1/2}, \end{equation} each $\D^{(2)}$ having $t$-length $\delta^\e T$. The juxtaposition of \eqref{4.124} and \eqref{4.125} provides $$ \|P_{\Nc_\delta(\D)} f\|_p \leq \k^2 \Dec \big(\delta^\e (iT)^2 \big) \Dec \big(\delta^{2\e}(iT)^2 \big)\big(\sum_{\D^{(2)} \subset \D} \|P_{\Nc_\delta(\D^{(2)})} f\|_p^2 \big)^{1/2}.$$

We repeat. After $J$ iterations, we have \begin{equation} \label{4.12} \|P_{\Nc_\delta(\D)} f\|_p \leq  \k^J \prod_{j=1}^{J}\Dec \big(\delta^{j\e}(iT)^2 \big)  \big(\sum_{\D^{(J)} \subset \D} \|P_{\Nc_\delta(\D^{(J)})} f\|_p^2 \big)^{1/2}. \end{equation} $\D^{(J)}$ has $t$-length $\delta^{J\e/2}T.$ Since $T \leq 1$ and $2^{1-k_j } \geq \delta^{j/(n+1)}$, $$\delta^{J\e/2} \leq \delta^{-\e} i^{-1} \big[(i+1)!\delta^{(i+1)/(n+1)}\big]^{1/2} \quad \Longrightarrow  \quad \delta^{J\e/2}T < 2\delta^{-\e}L$$ and similarly, when $i=n$, $$\delta^{J\e/2} \leq \delta^{-\e} n^{-1} \delta^{1/2} \quad \Longrightarrow  \quad \delta^{J\e/2}T < 3\delta^{-\e}L.$$Thus, choosing $J$ to be the minimal integer $j$ satisfying $$\delta^{j\e/2}T <  30\delta^{-\e}L,$$ we see that $J$ is bounded: \begin{IEEEeqnarray*}{rCls} J &\leq& \frac{i+1}{\e(n+1)} &\qquad if $i \leq n-1$ \\ \\J &\leq& \frac{1 + O(\log{n}/|\log{\delta}|)}{\e} &\qquad if $i =n$.\end{IEEEeqnarray*} For each $j \leq J$, $$\delta^{j\e}(iT)^2 \geq 900\delta^{-\e}(iL)^2 > 6(iL)^2 \geq \delta^{(i+1)/(n+1)},$$ so by monotonicity of the decoupling constant \begin{equation} \label{4.13} \Dec \big(\delta^{j\e}(iT)^2 \big)  \leq \Dec \big(\delta^{(i+1)/(n+1)} \big). \end{equation} Each $\D^{(J)}$ has length less than $30\delta^{-\e}L$, so we conclude with \eqref{4.1241} by applying H\"older's inequality and \eqref{4.13} to \eqref{4.12}. 

\end{proof}

\begin{lem} \label{lemf}Each translated $\D'$ obtained in Proposition \ref{pr4.1} is flat relative to some $m$-th coordinate hyperplane $\xi_m = 0$  $\big(m \in \{i, i+1\}\big)$ in the following strong sense: \begin{IEEEeqnarray}{rCll}  \big(t^{i+1} + \sum_{j=1}^{i+1} (i+1)_j |s_j|t^{i+1-j}\big) \Big |_{\D} &\lesssim& (iL)^2 & \quad \text{ if } i < n-1 \text{ and } iL = \big((i+1)!2^{-k_{i+1}}\big)^{1/2}, \text{ or } i=n \nonumber \\ & & & \qquad (s_n = 0 = s_{n+1}) \label{iflat1}  \\ \big(t^i + \sum_{j=1}^i (i)_j |s_j|t^{i-j}\big) \Big |_{\D} &\lesssim& iL & \quad \text{ if }i < n-1 \text{ and } iL = i!2^{-k_{i}}, \text{ or } i=n-1 \label{iflat2}. \end{IEEEeqnarray} \end{lem}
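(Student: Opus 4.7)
The plan is a direct estimation of the two left-hand sides in \eqref{iflat1} and \eqref{iflat2}, relying on the strong form of Lemma \ref{lem4.1} (whose proof in fact bounds the sum of absolute values $|t|^M + \sum_{j=1}^m (M)_j |s_j||t|^{M-j}$), the size bounds from condition $(\star)$, the defining inequalities for $L$ in Proposition \ref{pr4.1}, and the shell constraint $|s_j| \leq 2 \cdot 2^{-k_j}$. Once $\D'$ has been translated to starting position, we have $t \in [0, L]$, and crucially $iL$ is very small: condition $(\star)$ imposes $T \leq \delta^\e/(30(i+1))$, and the decoupling hypothesis $T \geq 30\delta^{-\e}L$ then forces $iL \leq \delta^{2\e}/900$. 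This smallness lets higher powers of $iL$ be absorbed into $(iL)^2$ or $iL$ wherever they appear.

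For \eqref{iflat1}, I split the sum $t^{i+1} + \sum_{j=1}^{i+1}(i+1)_j|s_j|t^{i+1-j}$ into four groups by $j$. The contribution from $j \leq i-2$ is bounded by $2((i+1)L)^3$ via Lemma \ref{lem4.1} applied with $M = i+1$, $m = i-2$; since $iL \ll 1$ this is $\lesssim (iL)^2$. The $j=i-1$ term uses $|s_{i-1}| \leq 1/(i-1)!$ from $(\star)$ to give $(i+1)_{i-1}|s_{i-1}|t^2 \leq i(i+1)L^2/2 \lesssim (iL)^2$. The $j=i$ term combines $|s_i| \leq 2 \cdot 2^{-k_i}$ with $i!\,2^{-k_i} \leq iL$ (from the definition of $L$) to get $(i+1)!|s_i|t \leq 2(i+1)iL^2 \lesssim (iL)^2$. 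Finally, the $j=i+1$ term uses $|s_{i+1}| \leq 2 \cdot 2^{-k_{i+1}}$ together with the case hypothesis $(iL)^2 = (i+1)!\,2^{-k_{i+1}}$ to yield $(i+1)!|s_{i+1}| \leq 2(iL)^2$. When $i = n$, the $j \geq n$ terms vanish since $s_n = s_{n+1} = 0$, and only the $j \leq n-1$ estimates are needed.

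The proof of \eqref{iflat2} is the same in spirit but one degree lower. The $j \leq i-2$ part of $t^i + \sum_{j=1}^i(i)_j|s_j|t^{i-j}$ is bounded by $2(iL)^2 \lesssim iL$ via Lemma \ref{lem4.1}; the $j=i-1$ term contributes $i!|s_{i-1}|t \leq it \leq iL$; and the $j=i$ term contributes $i!|s_i| \leq 2iL$ via $|s_i| \leq 2 \cdot 2^{-k_i}$ together with the case hypothesis $iL = i!\,2^{-k_i}$. The case $i = n-1$ is immediate since $s_n$ plays no role. The only potentially subtle point throughout is controlling constants so they remain $n$-independent; this is handled by the careful pairing between the factorially decreasing bounds $|s_j| \leq 1/j!$ built into $(\star)$ and the factorial coefficients $(i)_j, (i+1)_j$ arising in the components of $\x(t,s)$, which is why no step of the argument accumulates any dimensional blow-up.
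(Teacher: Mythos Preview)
Your proof is correct and follows essentially the same route as the paper's: both split off the top terms $j=i,i+1$ (handled directly from the definition of $L$ and the shell bounds $|s_j|\le 2\cdot 2^{-k_j}$) and then control the remaining $j\le i-1$ contribution using $|s_j|\lesssim 1/j!$ from condition $(\star)$; the paper collapses that tail via the binomial identity $(1+t)^{i+1}-1-(i+1)t$ and a second-order Taylor bound, whereas you peel off $j=i-1$ by hand and quote Lemma~\ref{lem4.1} for $j\le i-2$, which is the same geometric-series estimate. One small remark: you do not actually need the hypothesis $T\ge 30\delta^{-\e}L$ to secure $iL\ll 1$; since $\D'\subset\D$ forces $L\le T$ and condition $(\star)$ already gives $T\le \delta^\e/(30(i+1))$, you get $(i+1)L\le \delta^\e/30<1$ directly, which is all the argument requires.
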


\begin{proof} In the setting of \eqref{iflat1}, throughout $\D'$ $$\max \big\{(i+1)! |s_i| t, (i+1)! |s_{i+1}| \big \} \lesssim (iL)^2$$ by hypothesis. Thus, \eqref{iflat1} is a matter of verifying that $$t^{i+1} + \sum_{j=1}^{i-1} (i+1)_j |s_j|t^{i+1-j} \lesssim (iL)^2.$$ Since $\D'$ is standardized, the sum is bounded by $$ \sum_{j=0}^{i-1} \binom{i+1}{j} t^{i+1-j} = (1+t)^{i+1} - 1-(i+1)t $$ which in turn by Taylor approximation is bounded by $$(i+1)i(1 + 1/i)^{i-1}L^2 \lesssim (iL)^2.$$

Similarly, \eqref{iflat2} follows from $$t^i + \sum_{j=1}^{i-1} (i)_j |s_j| t^{i-j} \leq \sum_{j=0}^{i-1} \binom{i}{j} t^{i-j} = (1+t)^i - 1 \leq i(1+1/i)^{i-1}L \lesssim iL.$$

\end{proof}

We have now fully elaborated the decouplings to be utilized. All that remains is to introduce the rescalings. These in tandem with the linear maps of Lemma \ref{l2} complete a cycle of operations to be iterated. Naturally, it is simplest to narrate the whole procedure for low-dimensional $\M$, so we do this first for $\M^4$. Here, we do not have to be as ``efficient" as in Proposition \ref{pr4.1}. For example, we can take larger constant coefficients in \eqref{4.123}, that in particular exceed the component index $i$, and the final decoupling inequality is not worsened in any essential way. The only negative impact derives from the use of H\"older's inequality directly after each iterative step \eqref{4.124}, \eqref{4.125},... to reduce the slightly enlarged caps $\D^{(1)}, \D^{(2)}, \dots$

Let us reiterate. The reason for the meticulous care exercised in Proposition \ref{pr4.1} is to keep the coefficients $(i)_j$ in \eqref{1.01} from entering into the decoupling constant $\k$. But the size of these coefficients is mitigated when $i$ is kept low, and then their effect is harmless.

\section{Proof of Theorem \ref{t1}} \label{s4dim} 

Fix $k_1, k_2 \in \N$. In light of Lemma \ref{le1}, we may assume that \begin{equation} \label{5.1} 2^{-k_i} > \delta^{i/4} \end{equation} for some $i$. Our goal is to deduce a decoupling partition of $\Ac_{k_1, k_2}$ by maximally flat caps $\D_{k_1, k_2}$. For this, we shall need to execute a case analysis, according as $2k_1 \geq k_2$ or otherwise. \\

In the sequel, we shall obtain caps of varying sizes, alternately through decoupling and rescaling. To keep track of them, we introduce the notation $\D^{(j_1, j_2)},$ $j_1$ being the number of rescalings applied prior to acquiring $\D^{(j_1, j_2)}$ while $j_2$ denotes the number of decoupling processes executed since the most recent rescaling. \\

\begin{proof} $ $\newline

\noindent \underline{$2k_1 \geq k_2$}: The hypothesis confirms that $s_1^2$ is essentially smaller than $s_2$ throughout $\Ac_{k_1, k_2}$. Thus, we find that $$\max_{\x(t,s) \in \Ac_{k_1, k_2}} |(t^2 + 2ts_1 + 2s_2) - (t+s_1)^2| = \max_{\x(t,s) \in \Ac_{k_1, k_2}} |2s_2 - s_1^2| \sim 2^{-k_2}.$$ It follows that $\Ac_{k_1, k_2} \subset \Pf^1\big(O(2^{-k_2})\big)$. Inputting $E= O(2^{-k_2})$, Lemma \ref{lem4.0} provides an initial decoupling over caps $\D^{(0,1)}$ of $t$-length $2^{-k_2/2}$: \begin{equation} \label{5.2} \|P_{\Nc_\delta(\Ac_{k_1, k_2})} f\|_p \lesssim \Dec\big(2^{-k_2}\big)(\sum_{\D^{(0,1)} \subset \Ac_{k_1, k_2}} \|P_{\Nc_\delta(\D^{(0,1)})} f\|_p^2)^{1/2}. \end{equation} 

Fix $\D^{(0,1)}$, and translate it to starting position. The $t$ variable is now bounded by $2^{-k_2/2}$, so we may apply a corresponding rescaling. Our goal is to remain within $\M$, while simultaneously enlarging $s_2$ to scale $\sim 1$. In this way, we replace $\D^{(0,1)}$ with standardized caps to which Proposition \ref{pr4.1} may be applied.  

Let us further partition $\D^{(0,1)}$. We decompose $[2^{-k_2}, 2^{-k_2 + 1})$ into subintervals $J$ of length approximately $2^{-k_2}\delta^\e$, and then partition $\D^{(0,1)}$ into sets $$\D^{(0,2)} = \x([0, 2^{-k_2/2}] \times [2^{-k_1}, 2^{-k_1 + 1}) \times J).$$ By H\"older's inequality, \begin{equation} \label{5.3} \|P_{\Nc_\delta(\D^{(0,1)})} f\|_p \leq \delta^{-\e/2}(\sum_{\D^{(0,2)} \subset \D^{(0,1)}} \|P_{\Nc_\delta(\D^{(0,2)})} f\|_p^2)^{1/2}. \end{equation} 

To each $\Delta^{(0,2)}$, we apply the rescaling \begin{equation} \label{5.4} \Db(\xi_1, \dots, \xi_4) = \big((3a)^{-1/2}\xi_1, (3a)^{-1}\xi_2, (3a)^{-3/2}\xi_3, (3a)^{-2}\xi_4 \big)\end{equation} where $a$ is some number in the corresponding interval $J$. Relabeling \begin{equation} \label{5.41} t' = (3a)^{-1/2} t, \qquad s_1' = (3a)^{-1/2} s_1, \qquad s_2' = (3a)^{-1} s_2,\end{equation} we see that $\Db$ maps $\D^{(0,2)}$ to the set \begin{equation} \label{5.5} \D^{(1,0)} = \{(t' + s_1', t'^2 + 2t's_1' + 2s_2', t'^3 + 3t'^2s_1' + 6t's_2', t'^4 + 4t'^3s_1' + 12t'^2s_2'): \end{equation} $$  0 \leq t' \leq 1, s_1' \sim 2^{k_2/2-k_1}, s_2' \in (3a)^{-1}J\},$$ while enlarging the neighborhood width from $\delta_0 = \delta$ to $$\delta_1 = (3a)^{-2}\delta \sim 2^{2k_2}\delta.$$ The transition from $\D^{(0,2)}$ to $\D^{(1,0)} = \Db(\D^{(0,2)})$ is embodied in the equality \begin{equation} \label{5.6} \|P_{S} f\|_p = |\det \Db|^{-1/p'}\|P_{\Db(S)} g\|_p, \qquad (\hat{g} \coloneqq \hat{f} \circ \Db^{-1})  \end{equation} valid for all subsets $S \subset \Nc_\delta(\D^{(0,2)})$ and all functions $f$.

Note that within $\D^{(1,0)}$ \begin{equation} \label{5.51} |s_2' - 1/3| \leq \delta^\e \ \end{equation} since $a \in J$, and $$c_3 = \frac{1}{3}$$ according to \eqref{4.1190}. Also, $s_1'$ is bounded by case hypothesis. Thus, the caps $\D^{(1,1)}$ of $t'$-length $\delta_1^{\: \e}$ partitioning $\D^{(1,0)}$ are standardized with respect to the third component: \begin{IEEEeqnarray*}{rCl} \|P_{\Nc_{\delta}(\D^{(0,2)})} f\|_p &=& |\det \Db|^{-1/p'}\|P_{\Nc_{\delta_1}(\D^{(1,0)})} g\|_p \\  &\leq& \delta_1^{-\e/2} |\det \Db|^{-1/p'}  (\sum_{\D^{(1,1)} \subset \D^{(1,0)}} \|P_{\Nc_{\delta_1}(\D^{(1,1)})} g\|_p^2 )^{1/2}. \end{IEEEeqnarray*}

Now we are ready to complete the argument for this case. So far, we have determined \begin{equation} \label{5.61} \|P_{\Nc_\delta(\Ac_{k_1, k_2})} f\|_p \leq \delta^{-\e} \Dec\big(2^{-k_2}\big) \Big(\sum_{\D^{(1, 1)}} (|\det \Db|^{-1/p'} \|P_{\Nc_{\delta_1} (\D^{(1,1)})} g\|_p)^2 \Big)^{1/2}. \end{equation} Observe the relation between the third and fourth components of \eqref{5.5}: over each $\D^{(1,1)}$ translated to starting position, \begin{IEEEeqnarray*}{rCl} \xi_4 \circ \x - (\xi_3 \circ \x)^2 &=& (t'^4 + 4t'^3s_1 + 12t'^2s_2') - (t'^3 + 3t'^2s_1' + 6t's_2')^2 \\ &=& 12s_2'(1 - 3s_2')t'^2 + O(t'^3) \\   &\lesssim& \delta_1^{\: \e} t'^2. \IEEEyesnumber \label{5.7} \end{IEEEeqnarray*} The inequality follows from the fact that the $\D^{(1,1)}$ are standardized. Thus, the $\delta_1$-neighborhood of any cap $\x([0,T] \times I_1 \times I_2) \subset \D^{(1,1)}$ is contained in $\Pf^3(E)$ with $$E = O(\delta_1^{\: \e} T^2) + \delta_1.$$

 The calculation \eqref{5.7} was already done in Subsection \ref{ss4.1}, but we repeated it here to show how it naturally occurs in the course of successive decouplings. As well, unencumbered by arbitrarily large dimensional constants, the calculation is much simpler in 4 dimensions. Proposition \ref{pr4.1} now applies. Taking $i=3, n =4$, the proposition confirms that the caps $\D^{(1,2)} \subset \D^{(1,1)}$ of $t$-length $\delta_1^{1/2}$ occasion the following inequality:  \begin{equation} \label{5.10} \|P_{\Nc_{\delta_1}(\D^{(1,1)})} g\|_p \leq (30 \delta_1^{-\e})^{1/2} \big(\k \Dec \big(\delta_1 \big)\big)^{1/\e} \big(\sum_{\D^{(1,2)} \subset \D^{(1,1)}} \|P_{\Nc_{\delta_1}(\D^{(1,2)})} g\|_p^2 \big)^{1/2}. \end{equation}
Recall that \eqref{5.6} implies that \begin{equation} \label{5.13} \|P_{\Nc_{\delta_1}(\D^{(1,2)})} g\|_p = (\det \Db)^{1/p'} \|P_{\Db^{-1}(\Nc_{\delta_1}(\D^{(1,2)}))} f\|_p. \end{equation} In conclusion, we obtain the desired decoupling inequality \eqref{1.02} for the inputted function $P_{\Nc_\delta(\Ac_{k_1, k_2})} f$ through the combination of \eqref{5.61}, \eqref{5.10}, and \eqref{5.13}. (Monotonicity of the decoupling constant is to be used, since $2^{-k_2} \geq \delta^{1/2}$ and $\delta_1 \geq \delta$.)  We claim that the preimages $\Db^{-1}(\D^{(1,2)})$ are indeed the caps $\D_{k_1, k_2}$ of Theorem \ref{t1}. Observing the change of variables \eqref{5.41} for $t'$, we deduce that their $t$-length is indeed $$2^{-k_2/2}\delta_1^{1/2} = 2^{-k_2/2}(2^{2k_2}\delta)^{1/2} = 2^{k_2/2}\delta^{1/2}.$$ 

Each $\D_{k_1, k_2}$ is almost flat. To verify this claim, note first that the $t^4$ term in $$\xi_4 \circ \x = t^4 + 4t^3s_1 + 12t^2s_2$$ is surely $O(\delta)$ since $(2^{k_2}\delta)^{1/2}$ can at most be $\delta^{1/4}$ (recall \eqref{5.1}). Concerning the $s_1 t^3$ term, observe that our case hypothesis implies $$ (1/2)\max_{\x(t,s) \in \D_{k_1, k_2}} s_1 t^3 = 2^{-k_1}(2^{k_2}\delta)^{3/2} \leq 2^{-k_2/2}(2^{k_2}\delta)^{3/2} = 2^{k_2}\delta^{3/2} \leq \delta,$$ the final inequality being true by \eqref{5.1}. And of course, the following is immediate $$ (1/2)\max_{\x(t,s) \in \D_{k_1, k_2}} s_2 t^2 = 2^{-k_2} (2^{k_2}\delta) = \delta.$$

\noindent \underline{$k_2 > 2k_1$}: This case also makes use of the same parabolic cylindrical decoupling as in the previous case. However, handling the new hypothesis requires an additional preliminary step. Now it holds that $s_1^2$ is essentially larger than $s_2$, so that instead $$\max_{\x(t,s) \in \Ac_{k_1, k_2}} |(\xi_2 \circ \x) - (\xi_1 \circ \x)^2| \sim 2^{-2k_1}.$$ Consequently, Lemma \ref{lem4.0} provides the inequality \begin{equation} \label{5.15} \|P_{\Nc_\delta(\Ac_{k_1, k_2})} f\|_p \lesssim  \Dec \big(2^{-2k_1} \big) \big(\sum_{\D^{(0,1)} \subset \Ac_{k_1, k_2}} \|P_{\Nc_\delta(\D^{(0,1)})} f\|_p^2 \big)^{1/2}, \end{equation} true for caps $\D^{(0,1)}$ of $t$-length $2^{-k_1}$ partitioning $\Ac_{k_1, k_2}$. Observe that $2^{-2k_1} \geq \delta^{1/2}$ by hypothesis, so \eqref{5.15} is indeed a decoupling.

Translate $\D^{(0,1)}$ to starting position, so that we may appropriately rescale. For this, it is prerequisite as before to partition $\D^{(0,1)}$ into caps $$\D^{(0,2)} = \x([0, 2^{-k_1}] \times J \times [2^{-k_2}, 2^{-k_2 + 1}))$$ where $\{J\}$ is a partition of $[2^{-k_1}, 2^{-k_1 +1})$ into intervals of length approximately $2^{-k_1}\delta^\e$. By H\"older's inequality, \begin{equation} \label{5.16} \|P_{\Nc_\delta(\D^{(0,1)})} f\|_p \leq \delta^{-\e/2} (\sum_{\D^{(0,2)} \subset \D^{(0,1)}} \|P_{\Nc_\delta(\D^{(0,2)})} f\|_p^2)^{1/2}. \end{equation} For each $\D^{(0,2)}$, select $a \in J$, and apply the dilation \begin{equation} \label{5.17} \Db_1(\xi_1, \xi_2, \xi_3, \xi_4) = \big(3(4a)^{-1}\xi_1, 9(4a)^{-2}\xi_2, 27(4a)^{-3}\xi_3, 81(4a)^{-4} \xi_4\big) \end{equation} to $\D^{(0,2)}$. Relabeling \begin{equation} \label{5.18} t' = 3(4a)^{-1}t, \qquad s_1' = 3(4a)^{-1}s_1, \qquad s_2' = 9(4a)^{-2} s_2, \end{equation} $$J^{(1)} = 3(4a)^{-1}J, \qquad k_2' = k_2 - 2k_1 > 0,$$ the image is the rescaled cap $$\D^{(1,0)} = \{(t' + s_1', t'^2 + 2t's_1' + 2s_2', t'^3 + 3t'^2s_1' + 6t's_2', t'^4 + 4t'^3s_1' + 12t'^2s_2'):$$ \begin{equation} \label{5.19}  0 \leq t' \leq 1, s_1' \in J^{(1)}, s_2' \in \Ic_{k_2'}\} \end{equation} with neighborhood width enlarged from $\delta$ to $$\delta_1 = 81(4a)^{-4}\delta \sim 2^{4k_1}\delta.$$ Observe that subject to \eqref{5.19} $$|s_1' - 3/4| \lesssim \delta^\e$$ which is meaningful since $$c_2 = 3/4$$ by \eqref{4.1190}. As well, \begin{equation} \label{5.20} \|P_{S} f\|_p = (\det \Db_1)^{-1/p'} \|P_{\Db_1(S)} g_1\|_p, \qquad (\hat{g_1} \coloneqq \hat{f} \circ \Db_1^{-1}) \end{equation} for all subsets $S \subset \Nc_\delta(\D^{(0,2)})$. In particular, $$ \|P_{\Nc_\delta(\D^{(0,2)})} f\|_p = (\det \Db_1)^{-1/p'}\|P_{\Nc_{\delta_1}(\D^{(1,0)})} g_1\|_p. $$

Shifting our attention now to $\|P_{\Nc_{\delta_1}(\D^{(1,0)})} g_1\|_p$, we are ready to reduce the cap sizes to the next lower scale, one that is indexed by $s_2'$. The goal here is to find caps partitioning $\D^{(1,0)}$ of sufficiently small size that can be rescaled to caps with $s_2'$ parameter being essentially 1. Then, we may conclude the argument as done previously.

Let us partition $\D^{(1,0)}$ by caps $\D^{(1,1)}$ of $t$-length $\delta_1^\e$: \begin{equation} \label{5.202} \|P_{\Nc_{\delta_1}(\D^{(1,0)})} g_1\|_p \leq  \delta_1^{-\e/2}(\sum_{\D^{(1,1)} \subset \D^{(1,0)}} \|P_{\Nc_{\delta_1}(\D^{(1,1)})} g_1\|_p^2)^{1/2}. \end{equation} Each $\D^{(1,1)}$ is standardized relative to the second component. Consider the consequent relation between the second and third components of $\x$ throughout $\D^{(1,1)}$ \begin{IEEEeqnarray*}{rCl} |\xi_3 \circ \x - (\xi_2 \circ \x)^2| &=& |(t'^3 + 3t'^2s_1' + 6t's_2') - (t'^2 + 2t's_1' + 2s_2')^2| \\ &=& |s_1'(3 - 4s_1')t'^2 - 4s_2'^2| + O(t'^3) + |s_2'|O(t') \\ &\leq& O(\delta_1^{\: \e})t'^2 + O(2^{-2k_2'}) + O(2^{-k_2'})t'. \IEEEyesnumber \end{IEEEeqnarray*} This is indeed the bound \eqref{4.123} promised by Proposition \ref{pr4.1} with $L = O(2^{-k_2'})$ \big(see the remark following \eqref{4.1220}\big), and the proposition confirms that there is a decoupling over caps $\D^{(1,2)} \subset \D^{(1,1)}$ of $t$-length $2^{-k_2'}$:  \begin{equation} \label{5.25} \|P_{\Nc_{\delta_1}(\D^{(1,1)})} g_1\|_p \lesssim \delta_1^{-\e/2} \big(\k \Dec \big(2^{-2k_2'} \big)\big)^{2\e^{-1}\log_{\delta_1}{2^{-k_2'}}} \Big(\sum_{\D^{(1,2)} \subset \D^{(1,1)}} \|P_{\Nc_{\delta_1}(\D^{(1,2)})} g_1\|_p^2 \Big)^{1/2}. \end{equation} Note that $2^{-k_2'} \geq \min\{2^{-k_2}, \delta_1\} \geq \min\{\delta^{1/2}, \delta_1\}.$ Thus, $\Dec(2^{-2k_2'}) \leq \Dec(\delta)$ in particular.

Since $\delta_1 \geq \delta$, the juxtaposition of \eqref{5.20}, \eqref{5.202}, and \eqref{5.25} determines \begin{IEEEeqnarray*}{rCl} \|P_{\Nc_\delta(\D^{(0,2)})} f\|_p &\lesssim& \delta^{-\e} \big(\k \Dec \big(\delta \big) \big)^{2/\e} (\det \Db_1)^{-1/p'} \Big(\sum_{\D^{(1,2)} \subset \Db_1(\D^{0, 2})} \|P_{\Nc_{\delta_1}(\D^{(1,2)})} g_1\|_p^2 \Big)^{1/2} \\ &=&  \delta^{-\e} \big(\k \Dec \big(\delta \big) \big)^{2/\e}\Big(\sum_{\Db^{-1}(\D^{(1,2)}) \subset \D^{0, 2}} \|P_{\Nc_{\delta}(\Db^{-1}(\D^{(1,2)}))} f\|_p^2 \Big)^{1/2} \IEEEyesnumber \label{5.30} \end{IEEEeqnarray*} By \eqref{5.18}, the length of each $\D = \Db_1^{-1}(\D^{(1,2)})$ is $$2^{-k_1}2^{-k_2'} = 2^{-k_1}2^{2k_1 - 2k_2} = 2^{k_1 - k_2},$$ and this actually can be the scale at which caps in $\Ac_{k_1, k_2}$ are maximally almost flat. Inspired from the minimum value mentioned in Theorem \ref{t1}, the inequality $$2^{k_1 - k_2} < (2^{k_2}\delta)^{1/2}, $$ equivalently \begin{equation} \label{5.32} 2^{2k_1 -3k_2} < \delta, \end{equation} is readily checked to be equivalent to $$2^{-k_1} \max_{\x(t,s) \in \D} t^3 = 2^{2k_1 - 3k_2} = 2^{-k_2} \max_{\x(t,s) \in \D} t^2 < \delta,$$ proving almost flatness of $\D$. In particular, \eqref{5.32} holds when \begin{equation} \label{5.300} 2^{-k_2} \leq \delta^{1/2}.\end{equation} Applying \eqref{5.30} to each term on the right side of \eqref{5.16}, together with \eqref{5.15}, we have in this case \begin{equation} \label{5.31} \|P_{\Nc_\delta(\Ac_{k_1, k_2})} f\|_p \leq \delta^{-3\e/2} \Dec\big(\delta^{1/2} \big) \big(\k\Dec\big(\delta \big)\big)^{2/\e} \Big(\sum_{\D \subset \Ac_{k_1, k_2}} \|P_{\Nc_{\delta}(\D)} f\|_p^2 \Big)^{1/2}, \end{equation} as desired.

Therefore, we assume that \eqref{5.32} is false and show that in this scenario our argument extends to decoupling cap sizes of scale $(2^{k_2}\delta)^{1/2}$. Indeed, this scale marks flatness since the following inequality holds by assumption $$2^{-k_1} (2^{k_2} \delta)^{3/2} \leq \delta.$$

Since \eqref{5.32}, and therefore \eqref{5.300}, is false, we recall from \eqref{5.19} that $s_2' \sim 2^{-k_2'},$ which is the $t$-length of $\D^{(1,2)}$. Thus, $s_2'$ may be rescaled to size $\sim 1$. Translate $\D^{(1,2)}$ to starting position. Decomposing $[2^{-k_2'}, 2^{-k_2' + 1})$ into subintervals $J$ of length approximately $2^{-k_2'}\delta_1 ^{\: \e}$, let $$\D^{(1,3)} = \x([0, 2^{-k_2'}] \times J^{(1)} \times J)$$ be the corresponding caps that partition $\D^{(1,2)}$. The relevant decoupling inequality is \begin{equation} \label{5.33} \|P_{\Nc_{\delta_1}(\D^{(1,2)})} g_1\|_p \leq \delta_1^{-\e/2} (\sum_{\D^{(1,3)} \subset \D^{(1,2)}} \|P_{\Nc_{\delta_1}(\D^{(1,3)})} g_1\|_p^2)^{1/2}. \end{equation} 

We are imitating the procedure carried out in the first case ($2k_1 \geq k_2$), yet the dilation we use here must be different. For each $\D^{(1, 3)}$, select $a \in J$, and apply the map $$\Db_2(\xi_1, \xi_2, \xi_3, \xi_4) = (\xi_1, (3a)^{-1}\xi_2, (3a)^{-2}\xi_3, (3a)^{-3}\xi_4)$$ to $\D^{(1,3)}$. Relabeling \begin{equation} \label{5.34} t'' = (3a)^{-1}t', \qquad s_1'' = s_1', \qquad s_2'' = (3a)^{-1}s_2' \end{equation} $$J^{(2)} = (3a)^{-1}J, $$ the image is $$\D^{(2, 0)} = \{(3at'' + s_1'', 3a(t'')^2 + 2t''s_1'' + 2s_2'', 3a(t'')^3 + 3(t'')^2s_1'' + 6t''s_2'', $$ \begin{equation} \label{5.35} 3a(t'')^4 + 4(t'')^3s_1'' + 12(t'')^2 s_2''):  0 \leq t'' \leq 1, s_1'' \in J^{(1)}, s_2'' \in J^{(2)}\}\end{equation} with $s_2''$ rescaled to $\sim 1$ and neighborhood width enlarged from $\delta_1$ to \begin{equation} \label{5.350} \delta_2 = (3a)^{-3}\delta_1 \sim 2^{3k_2'}\delta_1 \sim 2^{3(k_2 - 2k_1)}2^{4k_1}\delta = 2^{3k_2 - 2k_1}\delta.\end{equation} Indeed, $$\delta_2 \leq 1$$ since \eqref{5.32} is false. For all subsets $S \subset \Nc_{\delta_1}(\D^{(1,3)})$,  \begin{equation} \label{5.351} \|P_S g_1\|_p = (\det \Db_2)^{-1/p'}\|P_{\Db_2(S)} g_2\|_p, \qquad (\hat{g_2} \coloneqq \hat{g_1} \circ \Db_2^{-1}). \end{equation}

Observe that $\D^{(2,0)}$ is not parametrized by $\x = \x_{ext}(\cdot, 1, \cdot)$, but it is parametrized by $\x_{ext}(\cdot, 3a, \cdot)$. Therefore, Lemma \ref{l2} applies to $\D^{(2,0)}$, making it possible to apply our iterative decoupling machinery. To get started, we observe that $$\xi_4 \circ \x_{ext} - (\xi_3 \circ \x_{ext})^2 = O((t'')^3) + 12s_2''(3s_2'' - 1)(t'')^2$$ \begin{equation} \label{5.36} = O((t'')^3) + O(\delta_1^{\:\e})(t'')^2 \end{equation} by \eqref{5.35}, which gives the analogue of \eqref{5.51}. Once we partition $\D^{(2,0)}$ by caps $\D^{(2,1)}$ of $t''$-length $\delta_2^\e$ \begin{equation} \label{5.37} \|P_{\Nc_{\delta_2}(\D^{(2,0)})} g_2\|_p \leq \delta_2^{-\e/2}(\sum_{\D^{(2,1)} \subset \D^{(2,0)}} \|P_{\Nc_{\delta_2}(\D^{(2,1)})} g_2\|_p^2)^{1/2}, \end{equation} and translate each $\D^{(2,1)}$ to starting position, equation \eqref{5.36} will then be bounded by $O(\delta_2^{\: \e})(t'')^2$ over $\D^{(2,1)}$.  It follows that all subsets $$\Nc_{\delta_2}\big(\x_{ext}([0,T] \times \{3a\} \times J^{(1)} \times J^{(2)}) \big) \subset \Nc_{\delta_2}(\D^{(2,1)})$$ are contained respectively in parabolic cylindrical neighborhoods $\Pf^{(3)}$ of width $$O(\delta_2^{\:\e})(T)^2 + \delta_2.$$ The rest of Proposition \ref{pr4.1} applies, providing caps $\D^{(2,2)}$ of $t''$-length $\delta_2^{1/2}$ partitioning $\D^{(2,1)}$ such that \begin{equation} \label{5.39} \|P_{\Nc_{\delta_2}(\D^{(2,1)})} g_2\|_p \lesssim  \delta_2^{-\e} \big(\k \Dec \big(\delta_2 \big)\big)^{1/\e} \Big(\sum_{\D^{(2,2)} \subset \D^{(2,1)}} \|P_{\Nc_{\delta_2}(\D^{(2,2)})} g_2\|_p^2 \Big)^{1/2}. \end{equation} The desired decoupling inequality \eqref{1.02} is the result of combining \eqref{5.15}, \eqref{5.16}, the inequality in \eqref{5.30}, \eqref{5.33}, \eqref{5.351}, \eqref{5.37}, and \eqref{5.39} followed by a change of variables back to original coordinates using \eqref{5.20} and \eqref{5.351}. By \eqref{5.18}, \eqref{5.34}, and \eqref{5.350}, the caps $\D_{k_1, k_2} = \Db_1^{-1}(\Db_2^{-1}(\D^{(2,2)}))$ thus obtained have $t$-length $$ 2^{-k_1}2^{-k_2'}\delta_2^{1/2} \sim 2^{-k_1}2^{2k_1 - k_2}(2^{3k_2 - 2k_1}\delta)^{1/2} = (2^{k_2}\delta)^{1/2}.$$

\end{proof}

\section{Proof of Theorem \ref{t0.1}} \label{sgen}

In this section, $\M = \M^{n}$, the $n$-dimensional moment surface in $\R^{n+1}$. The $\ell^2$ decoupling for $\M$ shall be obtained by way of an inductive procedure that generalizes Section \ref{s4dim}. In this section, we outline that procedure.

In Section \ref{s4dim}, some rescalings of the moment surface took place there, resulting in a renaming of the $s_i$ variables. Conveniently, at most two rescalings occurred at a time, so trivial changes in the notation sufficed. However, here it will be necessary to introduce the notation $s_i^{(j)}$ to denote the new $s_i$ variables that describe a cap $\D^{(j, l)}$ obtained after $j$ rescalings. 

Let us begin. In light of \eqref{beg} and Lemma \ref{le1}, it suffices to deduce $\Pc_\delta(\Ac^\a_{k_1, \dots, k_{n-1}})$ where we assume that \begin{equation} \label{7.0} 2^{1-k_i} > (1/i!)\delta^{i/(n+1)} \end{equation} for at least one $i$. Initially, we just have the cap $\D^{(0,0)}$ that is the whole surface $\Ac^\a_{k_1, \dots, k_{n-1}},$ which we abbreviate to $\Ac$. Consider the ordering of the set $$\Sc^0 = \{2^{-k_1}, (2!2^{-k_2})^{1/2}, (3!2^{-k_3})^{1/3}, \dots, ((n-1)!2^{-k_{n-1}})^{1/(n-1)}\}.$$ Let $i_0$ be the index of the maximum element in $\Sc^0$. Then, \begin{equation} \label{7.1} (i_0!2^{-k_{i_0}})^{-i/i_0} 2^{-k_i} \lesssim 1/i! \end{equation} holds for all $i$. We shall need \eqref{7.1} in order to rescale effectively.

The first decoupling that we apply is with respect to the parabolic cylinder $\Pf^{(1)}$. We choose this one since the first component of $\x$ is just a linear function of $t,s$. As before, $$\xi_2 \circ \x -  (\xi_1 \circ \x)^2 = 2s_2 - s_1^2,$$ and both of the latter terms are essentially less than $(i_0!s_{i_0})^{2/{i_0}}$ by our choice of $i_0$. Therefore, $$\xi_2 \circ \x - (\xi_1 \circ \x)^2 = O((i_0!s_{i_0})^{2/{i_0}}),$$ so we may apply Lemma \ref{lem4.0} with neighborhood width $O((i_0!2^{-k_{i_0}})^{2/i_0})$ to obtain caps $\D^{(0,1)}$ partitioning $\D^{(0,0)}$ such that \begin{equation} \label{7.2} \|P_{\Nc_\delta(\D^{(0,0)})} f\|_p \lesssim \Dec \big((i_0!2^{-k_{i_0}})^{2/i_0} \big) \big(\sum_{\D^{(0,1)} \subset \D^{(0,0)}} \|P_{\Nc_\delta(\D^{(0, 1)})} f\|_p^2 \big)^{1/2}. \end{equation} The caps $\D^{(0,1)}$ have $t$-length $(i_0!2^{-k_{i_0}})^{1/i_{0}}$ and $s_i$-length $2^{-k_i}$.

In preparation for the next decoupling, let us divide $[2^{-k_{i_0}}, 2^{-k_{i_0} + 1})$ into subintervals $J$ of length $2^{-k_{i_0}}\delta^\e/30$. This is reflected in the inequality: \begin{equation} \label{7.21} \|P_{\Nc_\delta(\D^{(0, 1)})} f\|_p \leq (30\delta^{-\e})^{1/2} (\sum_{\D^{(0,2)} \subset \D^{(0,1)}} \|P_{\Nc_\delta(\D^{(0, 2)})} f\|_p^2)^{1/2} \end{equation} where each $\D^{(0, 2)}$ has $t$-length $(i_0!2^{-k_{i_0}})^{1/i_{0}}$ and $s_i$-length $2^{-k_i}$ for $i \ne i_0$ while $|s_{i_0}|$ ranges over the interval $J$.

Let $a \in J$, and label $$\Big(\frac{(i_0+2)_{i_0}}{a(i_0 + 1)_{i_0}^2}\Big)^{1/i_0}= \Big(\frac{i_0 +2}{2a(i_0+1)!} \Big)^{1/i_0} $$ as $d_1$. Note that $$\Big (\frac{2^{-1+k_{i_0}}}{i_0!} \Big)^{1/i_0} \leq d_1 \leq \Big (\frac{2^{k_{i_0}}}{i_0!} \Big)^{1/i_0}$$ and $$d_1= \Big(\frac{c_{i_0+1}}{a} \Big)^{1/i_0}$$ according to \eqref{4.1190}. To each $\D^{(0, 2)}$, we apply the rescaling \begin{equation} \label{dil} \Db_1(\xi_1, \dots, \xi_{n+1}) = (d_1\xi_1, d_1^{\,2} \xi_2, \dots, d_1^{ \,n+1}\xi_{n+1}).\end{equation} There is a corresponding change of variables: \begin{equation} \label{7.210} \|P_S f\|_p = (\det \Db_1)^{-1/p'} \|P_{\Db(S)} g_1\|_p, \qquad (\hat{g_1} \coloneqq \hat{f} \circ \Db_1^{-1}), \end{equation} true for all $S \subset \Nc_\delta(\D^{(0,2)}).$ Let $$s_i^{(1)} = d_1^{\;i} s_i \sim (2^{k_{i_0}}/i_0!)^{i/i_0}s_i.$$ Observe that $|s_i^{(1)}| \lesssim 1/i!$ in light of \eqref{7.1}. After translation to starting position, $\D^{(1, 0)} = \Db_1(\D^{(0, 2)})$ has the form \begin{equation} \label{7.22} \{\x(t, s_1^{(1)}, \dots, s_{n-1}^{(1)}): 0 \leq t \leq 1, s_i^{(1)} \sim 2^{(k_{i_0}/i_0)i - k_i}, |s_{i_0}^{(1)} - c_{i_0+1}| \leq \delta^\e/(30i_0!)\}\end{equation} and enlarged neighborhood width $$\delta_1 = d_1^{\,n+1}\delta \sim (2^{k_{i_0}}/i_0!)^{(n+1)/i_0}\delta,$$ which is less than 1 by \eqref{7.0}. In particular, $|s_{i_0}^{(1)}| \lesssim 1/i_0!$ so the first two inequalities of condition $(\star)$ as specified in \eqref{4.1} are met. The caps $\D^{(1,1)}$ of $t$-length $\delta^{\: \e}_1/(30(i_0 +2))$ partitioning $\D^{(1,0)}$ are thus standardized: \begin{equation} \label{7.23} \|P_{\Nc_{\delta_1}(\D^{(1, 0)})} g_1\|_p \leq (30\delta_1^{-\e})^{1/2}\sqrt{i_0+2}  \big(\sum_{\D^{(1,1)} \subset \D^{(1,0)}} \|P_{\Nc_{\delta_1}(\D^{(1,1)})} g_1\|_p^2 \big)^{1/2}. \end{equation}

Let $a_i \in \R$ be approximate values of $|s_i^{(1)}|$ for all $i$, i.e. $a_i = (1/i!)\delta_1^{(i - i_0)/(n+1-i_0)}$ if $|s_i^{(1)}| \leq \delta_1^{(i-i_0)/(n+1-i_0)}/i! $, and otherwise, $a_i$ is chosen to be any number satisfying $$a_i \leq |s_i^{(1)}| \leq 2a_i.$$ In particular, note that if $s_i \leq (1/i!)\delta^{i/(n+1)}$, then $$s_i^{(1)} = d_1^{\,i}s_i \leq \frac{1}{i!}(d_1^{\,n+1}\delta)^{i/(n+1)} = \frac{1}{i!} \delta_1^{i/(n+1)} \leq \frac{1}{i!} \delta_1^{(i-i_0)/(n+1-i_0)}.$$ Formerly, we have taken $(1/i!)\delta^{i/(n+1)}$ to be the ``cutoff\," for a variable $s_i$ \big(see \eqref{1.20}\big), but here the exponent on $\delta$ must change in light of the following. Consider $$\Sc^1 = \{(i_0+1)!a_{i_0+1}, ((i_0+2)!a_{i_0+2})^{1/2}, \dots, ((n-1)!a_{n-1})^{1/(n-1 - i_0)}\}.$$ We choose this set instead of $\Sc^0$ because in particular we must avoid rescaling $s_{i_0}^{(1)}$, lest this term become excessively large. As before, we pick the index corresponding to the maximum element of $\Sc^1$, denoting that index here as $i_1$. Then, \begin{equation} \label{7.3} (i_1!a_{i_1})^{(i_0 - i)/(i_1 - i_0)}|s_i^{(1)}| \leq 2/i! \qquad \text{for all } i > i_0, \end{equation} so $(i_1!a_{i_1})^{(i_1 - i_0)^{-1}}$ should be our target $t$-length. 

Following the computation of Subsection \ref{ss4.1}, using \eqref{7.22}, we determine that $\D^{(1,1)}$ is contained in a parabolic cylindrical neighborhood $\Pf^{(i_0+1)}$ of thickness $$\xi_{i_0+2} \circ \x - (\xi_{i_0 + 1 } \circ \x)^2 = O((i_0+1)^2)\delta^\e t^2 + O((i_0+1)(i_0 +1)!)s_{i_0 + 1}^{(1)} t + O((i_0+2)!)s_{i_0+2}^{(1)} $$ $$+ \,O((i_0+1)!)^2(s_{i_0 + 1}^{(1)})^2).$$ It is true that $$(i_1! a_{i_1})^{(i_1 - i_0)^{-1}} \geq \max\{(i_0+1)!s_{i_0+1}^{(1)}, ((i_0 +2)!s_{i_0+2}^{(1)})^{1/2} \}.$$ Consequently, if the $t$-length $T$ of $\D^{(1,1)}$ exceeds $(\delta^{-\e}/30)(i_1! a_{i_1})^{(i_1-i_0)^{-1}}$, then $$T \geq (\delta^{-\e}/30) \max\{(i_0 +1)!s_{i_0+1}^{(1)}, ((i_0+2)!s_{i_0+2}^{(1)})^{1/2}\}.$$ So we enlarge $\Pf^{(i_0+1)}$ to thickness \begin{equation} \label{7.4}  O((i_0+1)^2)\delta^\e t^2 + O((i_0+1)^2) i_1!a_{i_1}^{(i_1 - i_0)^{-1}}\end{equation} and apply Proposition \ref{pr4.1} with $(i_0+1)L = (i_1!a_{i_1})^{(i_1 - i_0)^{-1}}$:
  \begin{equation} \label{7.5} \|P_{\Nc_{\delta_1}(\D^{(1,1)})} g_1\|_p \leq (30\delta_1^{-\e})^{1/2} \big( \k \Dec \big( 6(i_0+1)^2L^2 \big) \big)^{2\e^{-1}\log_{\delta_1}{L}} \Big(\sum_{\D^{(1,2)} \subset \D^{(1,1)}} \|P_{\Nc_{\delta_1}(\D^{(1,2)})} g_1\|_p^2 \Big)^{1/2}. \end{equation} 
  
 Recall that \begin{equation} \label{7.50} a_{i_1} \geq (1/i_1!)\delta_1^{(i_1-i_0)/(n+1-i_0)}\end{equation} and $\delta_1$ exceeds $\delta$, so \eqref{7.5} is a decoupling inequality. If equality holds in \eqref{7.50}, then $i!|s_{i}^{(1)}| \leq \delta_1^{(i-i_0)/(n+1-i_0)}$ for all $i > i_0$, implying that $$t^{n+1} + \sum_{j=1}^{n-1} (n+1)_j |s_j^{(1)}| t^{n+1-j}\bigg |_{t=(1/n)\delta_1^{1/(n+1-i_0)}}$$ \begin{IEEEeqnarray*}{rCl} &=& \Big(t^{n+1} + \sum_{j=1}^{i_0} (n+1)_j |s_j^{(1)}|t^{n+1-j} \Big) + \sum_{j=i_0+1}^{n-1} |s_j^{(1)}| t^{n+1-j}\bigg |_{t=(1/n)\delta_1^{1/(n+1-i_0)}} \\ &\leq& \delta_1 \sum_{j=0}^{i_0} \binom{n+1}{j}n^{-(n+1-j)} + \sum_{j=i_0+1}^{n-1} \binom{n+1}{j} \delta_1^{(j-i_0)/(n+1-i_0)}\delta_1^{(n+1-j)/(n+1-i_0)}n^{-(n+1-j)} \\ &\sim& \delta_1. \end{IEEEeqnarray*} The caps $\D^{(1,2)}$ in \eqref{7.5} would not have $t$-length $\delta^{(n+1-i_0)^{-1}}/n$, but rather $t$-length $\delta^{(n+1-i_0)^{-1}}/(i_0+1)$. However, we apply H\"older's inequality to produce such caps \begin{equation} \label{7.51} \|P_{\Nc_{\delta_1}(\D^{(1,2)})} g_1\|_p \leq \sqrt{n/(i_0+1)} \big(\sum_{\D^{(1,3)} \subset \D^{(1,2)}} \|P_{\Nc_{\delta_1}(\D^{(1,3)})} g_1\|_p^2 \big)^{1/2}\end{equation} and decoupling inequality \eqref{1.02} results from the concomitance of \eqref{7.51} with \eqref{beg}, \eqref{7.2}, \eqref{7.21}, \eqref{7.210}, \eqref{7.23}, and \eqref{7.5}. We would add each $\Db_1^{-1}(\D^{(1,3)})$ to $\Pc_\delta(\M)$ in this case.\\

By reducing $t$, we are steadily eliminating the possible neighborhood widths until only the appropriate rescaling of $\delta$ remains. The $t$-length of our caps is now such that $s_{i_1}^{(1)}$ may be rescaled to $\sim 1/i_1!$. This paves the way for imminent decouplings with respect to neighborhood widths involving appropriately rescaled $s_{i_1 + 1}, s_{i_1 + 2}, \dots, s_{n-1}, \delta$.

Let us assume strict inequality in \eqref{7.50} and continue the argument. As before, it behooves us to proceed with an elementary decoupling: \begin{equation} \label{7.6} \|P_{\Nc_{\delta_1}(\D^{(1,2)})} g_1\|_p \leq \delta_1^{-\e} (\sum_{\D^{(1,3)} \subset \D^{(1,2)}} \|P_{\Nc_{\delta_1}(\D^{(1,3)})} g_1\|_p^2)^{1/2} \end{equation} where each $\D^{(1,3)}$ corresponds to a distinct interval $J$, disjoint from the others, of length $\sim a_{i_1}\delta_1^\e/30$ contained in $[a_{i_1}, 2a_{i_1}]$. $\D^{(1,3)}$ has $t$-length $(i_0+1)^{-1}(i_1!a_{i_1})^{(i_1-i_0)^{-1}}$ inherited from $\D^{(1,2)}$. Once again, we pick some $a \in J$ and set $d_2$ to be $$d_2 = (c_{i_1 + 1}/a)^{(i_1-i_0)^{-1}} \sim (i_1!a_{i_1})^{-(i_1 -i_0)^{-1}} \gg 1.$$ The dilation that we define next generalizes \eqref{dil}. Let $\Db_2 : \R^{n+1} \rightarrow \R^{n+1}$ be the map defined component-wise \begin{equation} \label{7.7} \xi_i \mapsto d_2^{\,i - i_0}\xi_i.\end{equation} Note that for $i < i_0$, the power in \eqref{7.7} is negative, so these components are reduced. In particular, $s_1^{(1)}, \dots, s_{i_0-1}^{(1)}$ are all diminished, which has no bearing upon the sequel of our argument. This scenario appears inevitable because we must maintain a form of the caps that is amenable to Lemma \ref{l2}.   
 
Let us describe the image of $\Db_2$. Define \begin{IEEEeqnarray*}{rCl} s_i^{(2)} &=& d_2^{\,i - i_0}s_i^{(1)} \\ \delta_2 \; \; &=& d_2^{\,n+1 - i_0} \delta_1 > \delta_1, \end{IEEEeqnarray*} and $b_i$ to be approximate values of $s_i^{(2)}$ \begin{IEEEeqnarray*}{rCll} b_i &\leq& s_i^{(2)} \leq 2b_i & \qquad \text{ if } |s_i^{(2)}| > (1/i!)\delta_2^{(i-i_1)/(n+1-i_1)}\\ b_i &=& (1/i!) \delta_2^{(i-i_1)/(n+1-i_1)} & \qquad \text{ otherwise}. \end{IEEEeqnarray*} As before, if $s_i^{(1)} \leq (1/i!)\delta_1^{(i-i_0)/(n+1-i_0)}$, then $$s_i^{(2)} = d_2^{\,i-i_0}s_i^{(1)} \leq \frac{1}{i!}(d_2^{\,n+1-i_0}\delta_1)^{(i-i_0)/(n+1-i_0)} = \frac{1}{i!} \delta_2^{(i-i_0)/(n+1-i_0)} \leq \frac{1}{i!} \delta_2^{(i-i_1)/(n+1-i_1)}$$ for the same reason as previously, namely that $i < n+1.$ 

Each $s_i^{(2)}$ is $O(1/i!)$ by the comment following \eqref{7.7} and by \eqref{7.3}. After translation to starting position, each set $\Db_2(\D^{(1,3)})$ is $$\D^{(2,0)} = \{\x_{ext}(t, s_0^{(2)}, s_1^{(2)}, \dots, s_{n-1}^{(2)}) : 0 \leq t \leq 1, s_0^{(2)} = d_2^{-i_0}, s_i^{(2)} \sim b_i, $$ $$|s_{i_1}^{(2)} - c_{i_1 + 1}| \leq \delta_1^\e/(30i_1!)\},$$ and the neighborhood $\Nc_{\delta_1}(\D^{(1,3)})$ is enlarged to $\Nc_{\delta_2}(\D^{(2,0)})$. The terms on the right of \eqref{7.6} are impacted according to the following substitution: \begin{equation} \label{7.8} \|P_S g_1\|_p = (\det \Db_2)^{-1/p'} \|P_{\Db(S)} g_2\|_p \qquad \forall S \subset \Nc_{\delta_1}(\D^{(1,2)}), \qquad (\hat{g_2} \coloneqq \hat{g_1} \circ \Db_2^{-1}) .\end{equation} 

The caps $\D^{(2,1)} \subset \D^{(2,0)}$ of $t$-length $\delta_2^\e/(30(i_1+2))$ are standardized $$\|P_{\Nc_{\delta_2}(\D^{(2,0)})} g_2\|_p \leq (30\delta_2^{-\e})^{1/2}\sqrt{(i_1+2)/(i_0+2)} \Big(\sum_{\D^{(2,1)} \subset \D^{(2,0)}} \|P_{\Nc_{\delta_2}(\D^{(2,1)})} g_2\|_p^2 \Big)^{1/2},$$ so we are prepared for the next rescaling. This one is extracted from the smaller set $$\Sc^2 = \{(i_1+1)!b_{i_1 + 1}, \big((i_1+2)!b_{i_1 + 2}\big)^{1/2}, \dots, \big((n-1)!b_{n-1}\big)^{1/(n-1 - i_1)}\}.$$ As before with $\Sc^1$, we pick the maximal element of $\Sc^2$, label its index as $i_2$, and apply Proposition \ref{pr4.1} with $i=i_1 + 1, L= (i_1 + 1)^{-1}(i_2!b_{i_2})^{(i_2 - i_1)^{-1}}$: $$\|P_{\Nc_{\delta_2}(\D^{(2,1)})} g_2\|_p \leq (30\delta_2^{-\e})^{1/2} \big( \k \Dec \big( 6(i_1+1)^2L^2 \big) \big)^{2\e^{-1}\log_{\delta_2}{L}} \Big(\sum_{\D^{(2,2)} \subset \D^{(2,1)}} \|P_{\Nc_{\delta_2}(\D^{(2,2)})} g_2\|_p^2 \Big)^{1/2}.$$ Slightly smaller caps $\D^{(2,3)} \subset \D^{(2,2)}$ may be almost flat for scale $\delta_2$, depending on whether the maximal element $s_{i_2}^{(2)}$ is bounded by $(1/i_2! )\delta_2^{(i-i_1)/(n+1-i_1)}$ or not. In the former case, we add $(\Db_2 \circ \Db_1)^{-1}(\D^{(2,3)})$ to $\Pc_\delta(\M)$. If the latter holds, we rescale.
                                                                                                                                                                                                                                                                                                                                                                                                                                                                                                                                                                                             
We may repeatedly construct sets $\Sc^l$, decouple, and rescale just as outlined above. Each $l$-th step produces a comprehensive decoupling inequality  $$\|P_{\Nc_{\delta_l}(\D^{(l,0)})} g_l\|_p \leq (30\delta_l)^{-3\e/2}\sqrt{(i_{l}+2)/(i_{l-1} +2)}  \big( \k \Dec \big( 6(i_{l-1}+1)^2L_l^2 \big) \big)^{2\e^{-1}\log_{\delta_l}{L_l}} $$ \begin{equation}\label{7.10} \cdot \: \big(\sum_{\D^{(l,3)} \subset \D^{(l,0)}} \|P_{\Nc_{\delta_l}(\D^{(l,3)})} g_l\|_p^2 \big)^{1/2} \end{equation} and is followed by a rescaling, where \begin{IEEEeqnarray*}{rCl}   L_l &\geq& (i_{l-1}+1)^{-1}\delta_l^{1/(n+1-i_{l-1})} \\ \delta_l &=& d_l^{n+1 - i_{l-2}}\delta_{l-1}\\s^{(l)}_i &=& d_l^{n+1-i_{l-2}} s^{(l-1)}_i\\ \Db_l &:& \xi_i \mapsto d_l^{i-i_{l-2}}\xi_i \\ \hat{g}_l &=& \hat{g}_{l-1} \circ \Db_l^{-1} \end{IEEEeqnarray*} and $i_{l}$ is the maximal index for the set $$\Sc^l = \{(i_{l-1}+1)!|s^{(l)}_{i_{l-1} +1}|, \big((i_{l-1}+2)!|s^{(l)}_{i_{l-1}+2}|\big)^{1/2}, \cdots, \big((n-1)!|s^{(l)}_{n-1}|\big)^{1/(n-1-i_{l-1}}\},$$ $d_l$ being essentially the reciprocal of that maximal element. Therefore, generally the constant factor in \eqref{7.10} has the simpler upper bound$$(30\delta_l^{-3\e})^{1/2} \sqrt{(i_{l}+2)/(i_{l-1} +2)} \big( \k \Dec \big( \delta_l^{1/(n+1-i_{l-1})} \big) \big)^{2\e^{-1}[(n+1-i_{l-1})^{-1} + \log_{\delta_l}(i_{l-1}+1)^{-1}]} $$ which, since $\delta < \delta_l < 1$, may be further bounded by an expression involving just the original scale \begin{equation} \label{7.11}  (30\delta^{-3\e})^{1/2} \sqrt{(i_{l}+2)/(i_{l-1} +2)} \big( \k \Dec \big( \delta \big) \big)^{2\e^{-1}[(n+1-i_{l-1})^{-1} + \log(i_{l-1}+1)]}.\end{equation} 

The iterative process continues until a set $\Sc^N$ occurs for which either the $(n-1)$ index is maximal or all terms in $\Sc^N$ are less than $\delta_N^{1/(n+1-i_{N-1})}$. Note $N \leq n-1$. Indeed, one of these must occur as each step rescales an $i_l!s^{(l)}_{i_l}$ parameter to essentially 1, making it possible to restrict attention next to the $s^{(l)}_{i}$ satisfying $i > i_l$. After the $N$-th step, we have caps $\D^{(N,3)}$ whose $t$-length is $T_0 = (n-2)!s_{n-1}^{(N)}$ by Proposition \ref{pr4.1}. $T_0$ is the scale at which $\x_{ext}$ is almost flat in the $(n-1)$-component. More precisely, once $\D^{(N,3)}$ has been translated to starting position, \begin{equation} \label{7.81} \xi_{n-1}\Big|_{\D^{(N,3)}} = O\big((n-1)!|s_{n-1}^{(N)}|\big)\end{equation} by Lemma \ref{lemf}. This property of course is inherited by each cap contained within $\D^{(N,3)}$.

Moreover, the caps $\D^{(N,3)}$ may be almost flat relative to scale $\delta_N$. If they are not, then we rescale as customary to procure caps $\D^{(N+1, 0)}$ for which $(n-1)!s^{(N)}_{n-1} \sim 1.$ For the latter, we may consider the final parabolic approximation over each standardized cap $\D^{(N+1,1)} \subset \D^{(N+1,0)}$: \begin{equation} \label{7.9} \xi_{n+1} \circ \x - (\xi_n \circ \x)^2 = O(\delta^\e)t^2 + O(\delta_{N+1}).\end{equation} Applying Proposition \ref{pr4.1} to each $\D^{(N+1, 1)}$ yields a decoupling partition $\{\D^{(N+1, 2)}\}$ corresponding to $t$-intervals of length $(1/n)\delta_{N+1}^{1/2}$. But then, once translated to starting position, by Lemma \ref{lemf}, $\D^{(N+1, 2)}$ satisfies \begin{equation} \label{7.91}  \xi_{n+1}|_{\D^{(N+1, 2)}} = O(\delta_{N+1}) \end{equation} and $\D^{(N+1,2)}$ lies within $O(\delta_{N+1})$ of its tangent plane as desired. Undoing all of the rescalings, since these maps (as well as the translation maps $\A$) are linear and preserve $\M$ and therefore its tangent spaces, the preimage $\D$ of $\D^{(N+1,2)}$ will obey the same property relative to $\delta$. As well, noting that each rescaling $\Db_l^{-1}$ preserves inequalities of the form $$\xi_{i} \circ \x_{ext}(t, d_l^{-i_{l-2}}, |s^{(l)}_1|, \dots, |s^{(l)}_{n-1}|) = O(|i! s^{(l)}_i|),$$ we see that \eqref{7.81} and \eqref{7.91} imply that \begin{IEEEeqnarray}{rCl} \xi_{n-1} \circ \x(t, |s_1|, \dots, |s_{n-1}|) \Big|_{\D} &=& O\big((n-1)!|s_{n-1}|\big) \\ \xi_{n+1} \circ \x(t, |s_1|, \dots, |s_{n-1}|) \Big|_{\D} &=& O(\delta) \label{7.92}. \end{IEEEeqnarray} Thus, $\Delta$ will also have $t$-length $T$ essentially bounded by \begin{equation} \label{7.14} T \leq \min \big\{\min_i |(n-1-i)!s_{n-1}/s_i|^{1/(n-1-i)}, (2^{k_{n-1}}\delta/(n+1)!)^{1/2}\big\}.\end{equation} \big(Note that by Stirling's formula, $(n-1-i)!^{(n-1-i)^{-1}} \sim (n-1-i)^{3/2}$.\big) $\D$ is added to $\Pc_\delta(\M)$. Note that in all cases, the elements of our partition $\Pc_\delta(\M)$ have $t$-length bounded by $(2^{k_{n-1}}\delta/(n+1)!)^{1/2}$ since \eqref{7.91}, and therefore \eqref{7.92}, holds universally.

By \eqref{7.10} and \eqref{7.11}, the full decoupling inequality that we have obtained is \begin{equation}\label{7.12} \|P_{\Nc_\delta(\Ac^\a_{k_1, \dots, k_{n-1}})} f\|_p \leq (30\delta^{-3\e/2})^{n/2}n^{1/2} \big(\kappa \Dec_p^\P \big(\delta \big) \big)^{2\e^{-1}n\log(n)} \Big(\sum_{\D \subset \Nc_\delta(\Ac^\a_{k_1, \dots, k_{n-1}})} \|P_{\Nc_\delta(\D)} f\|_p^2 \Big)^{1/2}. \end{equation} This bound should be taken if $N \sim n$. If rather $N = O(1)$, \eqref{7.12} may be replaced with \begin{equation} \label{7.13} \|P_{\Nc_\delta(\Ac^\a_{k_1, \dots, k_{n-1}})} f\|_p \leq (30\delta^{-3\e/2})^{O(1)}n^{1/2} \big(\kappa \Dec_p^\P \big(\delta \big) \big)^{\e^{-1}O(\log(n))} \Big(\sum_{\D \subset \Nc_\delta(\Ac^\a_{k_1, \dots, k_{n-1}})} \|P_{\Nc_\delta(\D)} f\|_p^2 \Big)^{1/2}. \end{equation} Recall that we have the bound $\Dec_p^\P\big(\delta\big) \lesssim (\log 1/\delta)^{O(1)}$. The proof is complete.

\begin{re} It is possible to work just with $\delta$ in the pigeonholing of the $t$ and $s_{i_l}^{(l)}$ variables and the application of Proposition \ref{pr4.1}. If we do so, $\log(n)$ in the exponent on $\kappa \Dec^\P_p(\delta)$ in \eqref{7.13} may be replaced with 1 when $\delta < 1/n$. \end{re}

\section{Proof of Theorem \ref{t0}} \label{lastsec}

The moment surface $\M^n$ is a prototype for the general ruled hypersurface generated by a space curve. In this section, we extend the decoupling proved for $\M^n$ to the ruled hypersurface $\Mf^n$ generated by a nondegenerate curve $\phi: [-1,1] \rightarrow \R^{n+1}$. 

Recall that it is assumed that $\| \phi^{(i)}(t)\| \leq C$ for all $1\leq i \leq n+2$ and for all $t$. We say that $\M$ is prototypical for $\Mf$ because by Taylor approximation (and the Cauchy-Schwarz inequality), $$\phi^{(j)}(t) = \sum_{i=j}^{n+1} \frac{(t-t_0)^{i-j}}{(i-j)!} \phi^{(i)}(t_0) + O\Big(C\frac{|t-t_0|^{n+2-j}}{(n+2-j)!}\Big)$$ for each $i \geq 0$, so that \begin{IEEEeqnarray*}{rCl} \y(t,s) &=& \phi(t) + \sum_{j=1}^{n-1} s_j \phi^{(j)}(t) \\ &=& \phi(t_0) + \sum_{i=1}^{n+1} \Big [(t-t_0)^i + \sum_{j=1}^i s_j (i)_j (t-t_0)^{i-j} \Big] \big(\phi^{(i)}(t_0)/i! \big) + O \Big(C\sum_{i=1}^{n-1} |s_i|\frac{|t-t_0|^{n+2-i}}{(n+2-i)!} \Big)\\ & & + \, O\Big(C\frac{|t-t_0|^{n+2}}{(n+2)!}\Big). \IEEEyesnumber \label{8.00} \end{IEEEeqnarray*} Therefore, if we take the matrix \begin{equation} \label{phimatrix} \Phi(t) = \begin{pmatrix} \Big | & \Big | & \ & \Big | \\ \ & \ & \ & \ \\ \phi'(t) & \phi''(t)/2! & \ldots & \phi^{(n+1)}(t)/(n+1)!\\ \ & \ & \ & \ \\ \Big | & \Big | & \ & \Big | \end{pmatrix}, \end{equation} and define the affine map $\Mb: \R^{n+1} \rightarrow \R^{n+1}$ by \\$$\Mb(x) = \Phi(t_0)^{-1}\big(x - \phi(t_0)\big),$$ then \begin{equation} \label{dag} \Mb\big(\y(t,s)\big) = \x(t-t_0,s) + O \Big(C\sigma^{-1}\frac{|t-t_0|^{n+2}}{(n+2)!}\Big) + O\Big(C\sigma^{-1}\sum_{i=1}^{n-1}|s_i|\frac{|t-t_0|^{n+2-i}}{(n+2-i)!}\Big) \end{equation} where $\sigma$ is the smallest singular value of $\Phi(t_0)$. The verity of \eqref{dag} follows from the fact that linear maps take unit balls to ellipsoids, which is seen directly from singular value decomposition of matrices. Furthermore, the ellipsoid's largest principal axis length corresponds to the maximum singular value. For $\Phi^{-1}(t_0)$, this value is $\sigma^{-1}$. 

In other words, $\Mb$ maps $\Mf$ into a perturbation of $\M$. The representation \eqref{8.00} also rationalizes our definition of $\Nc_\delta(\Mf)$ in \eqref{nbhd}. Given an arbitrary point $\y(t,s) + O\big(\delta/(n+1)!\big)\phi^{(n+1)}(t) \in \Nc_\delta(\Mf)$, we can approximate $$\phi^{(n+1)}(t) = \phi^{(n+1)}(t_0) + O(C|t-t_0|)$$ and consequently $$\Mb \Big(\y(t,s) + O\big(\delta/(n+1)!\big)\phi^{(n+1)}(t) \Big) = \x(t,s) + O(\delta)\ev_{n+1} $$ \begin{equation} \label{8.01}  +\,  O\bigg(C\sigma^{-1}\Big(\frac{|t-t_0|^{n+2}}{(n+2)!} + \sum_{i=1}^{n-1} |s_i| \frac{|t-t_0|^{n+2-i}}{(n+2-i)!} \Big)\bigg) + O\bigg(C\sigma^{-1}\delta|t-t_0|/(n+1)!\bigg).\end{equation} 

Now, accessible lower bounds for $\sigma$ do exist. We take the following inequality from \cite{DY} which aligns well with emphasizing $\det(\Phi)$ and $\|\phi\|_{C^{n+2}}$\,: $$\sigma \geq |\det \Phi(t_0)| \cdot \Big(\frac{n}{\|\Phi(t_0)\|^2}\Big)^{(n-1)/2}$$ which then, assuming as we may that $\det \Phi(t)$ is bounded below uniformly by $c$, gives rise to \begin{equation} \label{8.02} \sigma \gtrsim c \cdot C^{-(n-1)}.\end{equation}\\

At last, we formally define the decoupling constant for $\Mf$. Given $\delta > 0$, let $\Pc_\delta(\Mf)$ be a partition of $\Mf$ consisting of caps $\Delta$ that are flat at scale $\delta$ in the following manner: $$ \exists \, \y(t_0,s) \in \Delta \text{ such that }$$ \begin{equation} \label{parti} \sup_{\y(t,s) \in \D} |t-t_0|^{n+1} + (n+1)_1 |s_1| |t-t_0|^n + \cdots + (n+1)_{n-1} |s_{n-1}| |t-t_0|^2 \lesssim \delta. \end{equation} The inequality is familiar and should be recognized as our sufficient criterion for almost flatness within $\M$. In the previous section, the inequality in \eqref{parti} was shown to hold for each element of $\Pc_\delta(\M)$ obtained there. Thus, \eqref{parti} defines the elements of $\Pc_\delta(\Mf)$ (essentially) as affine images of the elements in $\Pc_\delta(\M)$ under the maps $\Mb$. Notice that $\D \in \Pc_\delta(\Mf)$ is then indeed almost flat at scale $\delta$, relative to the transverse vector $\phi^{(n+1)}(t)/(n+1)!$.

We define $\Dec_p^\Mf(\delta)$ to be the smallest $K > 0$ such that there exists a $\Pc_\delta(\Mf)$ for which \begin{equation} \label{8.0} \|P_{\Nc_\delta(\Mf)} f\|_p \leq K (\sum_{\D \in \Pc_\delta(\Mf)} \|P_{\Nc_\delta(\D)} f\|_p^2)^{1/2}. \end{equation}

\begin{te} \label{constant2}
Let $B = \big(\frac{C\sigma^{-1}}{(n+1)!}\big)^{1/2} \lesssim \big(\frac{C^nc^{-1}}{(n+1)!}\big)^{1/2}$, where $C$ bounds the $C^{n+2}$ norm of $\phi$ and $c$ is a uniform lower bound on the determinant of \eqref{phimatrix}. Then, $$\nDec_p^\Mf(\delta) \lesssim \delta^{-2\e n^2} \Big(\log \frac{1}{\delta} \Big)^{\big[\log B\k^{2\e^{-1}n \log n} + n\e^{-1}(\log n)O(\log \log 1/\delta) \big](\log \frac{n+2}{n+1})^{-1}}  $$ $$\cdot \prod_{j=1}^{n-1} (\log j!\delta^{-\frac{j}{n+1}})^{O\big(\frac{\log \log \frac{1}{\delta}}{\log \frac{n+2}{n+1}}\big)}.$$

\end{te}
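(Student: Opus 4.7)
The plan is to prove Theorem \ref{constant2} by the induction on scales promised in the paragraph following Theorem \ref{t0.1}, using the Taylor expansion \eqref{8.00}--\eqref{8.01} to locally model $\Nc_\delta(\Mf)$ as an affine image of a $\tau$-neighborhood of $\M$. Concretely, at any base point $t_0$ the affine map $\Mb(x) = \Phi(t_0)^{-1}(x - \phi(t_0))$ sends the portion of $\Nc_\delta(\Mf)$ with $|t-t_0| \leq \ell$ into the $\tau$-neighborhood of $\M$, where $\tau = O(B^2 \ell^{n+2}) + \delta$ by \eqref{8.01} and \eqref{8.02}. Because $\Phi(t_0)$ is nondegenerate, $\Mb$ is a linear isomorphism with bounded distortion, so one may decouple the image via Theorem \ref{t0.1} and transfer the partition back through $\Mb^{-1}$.

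First I would partition $[-1,1]$ into intervals $I$ of length $\ell_0$, yielding a coarse partition $\mathfrak{D}_I = \y(I \times [-2,2]^{n-1})$ of $\Mf$. Choosing $\tau_0$ a small constant and $\ell_0 \sim B^{-2/(n+2)} \tau_0^{1/(n+2)}$, the map $\Mb_{t_0}$ pushes each $\Nc_\delta(\mathfrak{D}_I)$ into $\Nc_{\tau_0}(\M)$, so an application of Theorem \ref{t0.1} at scale $\tau_0$ combined with the H\"older reduction \eqref{beg} decouples $\mathfrak{D}_I$ into sub-caps of $t$-length $\sim \tau_0^{1/(n+1)}$. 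Now I iterate: each such sub-cap, re-centered and linearized by a new $\Mb$, fits into a $\tau_1$-neighborhood of $\M$ with $\tau_1 \sim B^2 \tau_0^{(n+2)/(n+1)}$, since the new $t$-length $\ell_1 = \tau_0^{1/(n+1)}$ satisfies $B^2\ell_1^{n+2} = B^2 \tau_0^{(n+2)/(n+1)}$. Reapplying Theorem \ref{t0.1} at scale $\tau_1$ and pulling back gives the next layer of sub-caps, of $t$-length $\tau_1^{1/(n+1)}$. The recursion $\tau_{j+1} \sim B^2\tau_j^{(n+2)/(n+1)}$ terminates once $\tau_N \lesssim \delta$, which occurs after $N \sim (\log\log(1/\delta) + \log B)/\log((n+2)/(n+1))$ iterations.

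The final bound then comes from multiplying the $N$ per-step decoupling constants. Each stage contributes a factor of $\Dec_p^\M(\tau_j) \leq (60\tau_j^{-\e/2})^{\k n/2} n^{1/2} (\k \log(1/\tau_j))^{\k\e^{-1}n \log n}$ from Theorem \ref{t0.1}, plus the pigeonhole factor $\prod_{j=1}^{n-1}\log(j!\tau^{-j/(n+1)})$ from \eqref{beg}. The telescoping sum $\sum_{j<N}\log(1/\tau_j) \sim (n+1)\log(1/\delta)$ collapses the polynomial factors $\prod_j \tau_j^{-\e \k n/2}$ to $\delta^{-O(\e n^2)}$; the $N$-fold product of the polylogarithmic factors becomes $(\log(1/\delta))^{[\log B\k^{2\e^{-1}n\log n} + n\e^{-1}(\log n)O(\log\log(1/\delta))] / \log((n+2)/(n+1))}$, with the $\log B$ term arising from the additive $\log B$ correction to $N$; and the $N$ copies of \eqref{beg} produce the final $\prod_{j=1}^{n-1}\log(j!\delta^{-j/(n+1)})^{O(N)}$ factor.

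The main obstacle is the bookkeeping: carefully tracking how the scale shift introduced by the rescaling constant $B$, by the factorials appearing in \eqref{8.01}, and by the pigeonhole losses in \eqref{beg}, propagate through $N$ iterations. One must also verify that the caps produced at termination satisfy the almost-flatness condition \eqref{parti} so that they genuinely lie in $\Pc_\delta(\Mf)$; this amounts to checking that each rescaling $\Mb^{-1}$ preserves the relevant inequalities analogously to how the linear maps in Section \ref{sgen} preserve the flatness estimates \eqref{7.81} and \eqref{7.91}.
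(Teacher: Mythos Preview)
Your proposal is correct and follows essentially the same Pramanik--Seeger induction-on-scales strategy as the paper: both arguments use the affine map $\Mb$ at each stage to model a $t$-localized piece of $\Nc_\delta(\Mf)$ as a small perturbation of $\Nc_\tau(\M)$, invoke Theorem~\ref{t0.1} there, and iterate with the scale relation $\bar\delta = \delta^{(n+1)/(n+2)}$ (equivalently your $\tau_{j+1} \sim B^2\tau_j^{(n+2)/(n+1)}$), accumulating $N \sim (\log\log(1/\delta))/\log\tfrac{n+2}{n+1}$ factors. The only cosmetic difference is that the paper phrases this as a recursion $\Dec_p^\Mf(\delta) \lesssim B\,\Dec_p^\M(\delta)\,\Dec_p^\Mf(\delta^{(n+1)/(n+2)})$ on the abstract decoupling constant rather than unfolding the iteration explicitly; one technical point you should make explicit is that the image under $\Mb$ is only an $O(\delta)$-perturbation of $\Nc_\delta(\M)$ (not literally contained in it), which the paper handles by observing that the proof of Theorem~\ref{t0.1} is insensitive to such perturbations since $i!|s_i^{(l)}|$ always dominates the rescaled $\delta$.
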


\begin{proof} 

The proof adapts the familiar Pramanik-Seeger induction on scales from \cite{PS} to the current setting. Here, given $\delta$, the appropriate scale to take at the inductive step is $\bd = \delta^{\frac{n+1}{n+2}}$. 

We proceed as follows. $P_{\Nc_\delta(\Mf)} f$ is supported in $\Nc_\delta(\Mf)$ and therefore is also Fourier supported in the larger neighborhood $\Nc_{\bd}(\Mf)$. Consequently, $P_{\Nc_\delta(\Mf)} f = P_{\Nc_{\bd}(\Mf)} f$, and we apply the inductive hypothesis to obtain the preliminary decoupling \begin{equation} \label{8.1} \|P_{\Nc_\delta(\Mf)} f\|_p \leq \Dec_p^\Mf(\bd) (\sum_{\bD \in \Pc_{\bd}(\Mf)} \|P_{\Nc_\delta(\bD)} f\|_p^2)^{1/2}. \end{equation} By \eqref{parti}, each $\bD$ has a point $\y(t_0, s) \in \bD$ such that \begin{equation} \label{8.2} \sup_{\y(t,s) \in \bD} |t-t_0|^{n+1} + (n+1)_1 |s_1| |t-t_0|^n + \cdots + (n+1)_{n-1} |s_{n-1}| |t-t_0|^2 \lesssim \delta^{(n+1)/(n+2)}. \end{equation} Immediately we determine that $\sup_{\y(t,s) \in \bD} |t-t_0| \lesssim \delta^{1/(n+2)}.$ By utilizing H\"older's inequality, we may assume the $t$-length of $\bD$ is instead at most $C^{-1}\sigma(n+1)! \delta^{1/(n+2)}$.

Let $\rt = t-t_0$. We have arranged for \begin{equation} \label{8.3} |\rt| \leq C^{-1}\sigma(n+1)! \delta^{1/(n+2)}.\end{equation} To each $\Nc_\delta(\bD)$, we apply the affine map $\Mb_{t_0}$ via change of variables $$\|P_{\Nc_\delta(\bD)} f\|_p = (\det \Mb) ^{-1/p'}\|P_{\Mb\big(\Nc_\delta(\bD)\big)} g\|_p, \qquad \quad (\hat{g} = \hat{f} \circ \Mb^{-1}).$$ We have characterized the points in $\Mb\big(\Nc_\delta(\bD)\big)$ in \eqref{8.01}. According to \eqref{8.2}, $$\frac{|\rt|^{n+1}}{(n+2)!} + \sum_{i=1}^{n-1} |s_i| \frac{|\rt|^{n+1-i}}{(n+2-i)!} \leq \frac{1}{(n+1)!} \sum_{i=0}^{n-1} (n+1)_i |s_i \rt|^{n+1-i} \lesssim \frac{\delta^{(n+1)/(n+2)}}{(n+1)!}.$$ From \eqref{8.3}, we then deduce that the elements of $\Mb\big(\Nc_\delta(\bD)\big)$ have the form \begin{equation} \label{8.4} \Mb\Big(\y(t,s) + O\big(\delta/(n+1)!\big) \phi^{(n+1)}(t) \Big) = \x(t,s) + O(\delta)\ev_{n+1} + O(\delta).\end{equation}

Points determined by \eqref{8.4} are not quite in $\Nc_\delta(\M)$, but nevertheless we may apply the decoupling of Theorem \ref{t0.1} (with neighborhood width $\delta$) upon inspection of the proof. The iterative argument outlayed in Section \ref{sgen} relied at each step on assessing the sets $$\Sc^{l} = \big \{(i_{l-1}+1)!|s_{i_{l-1}+1}^{(l)}|, \dots, \big((n-1)!|s_{n-1}^{(l)}| \big)^{1/(n-1-i_{l-1})}\big\}.$$ Such consideration was natural as it was the $t$-independent terms of $\x$ that determined the error formulated in Proposition \ref{pr4.1}. If small perturbations $O(\delta)$ are added to each component of $\x$, then we instead would have to address $\max \{|(i_{l-1}+j)!s_{i_{l-1}+j}|, O(\delta)\}$ in the place of $|s_{i_{l-1}+j}|.$ Yet this is immaterial as $i!|s_i| \geq \delta$ for each $i$, and so $i!|s_i^{(1)}| \geq d_1^i \delta$, giving rise to $$i!|s_i^{(l)}| = i!d_l^{i-i_{l-2}}|s_i^{(l-1)}| \geq d_l^{i-i_{l-2}} \delta$$ at each $l$-th step. In other words, $i!|s_i^{(l)}|$ always exceeds the value produced by the effect of $\Db_l \circ \cdots \circ \Db_1$ on $\delta$. We conclude that the proof of Theorem \ref{t0.1} proceeds for $\Mb\big(\Nc_\delta(\bD)\big)$ as before without any modification.

Thus we have secured \begin{equation} \label{8.5} \|P_{\Mb\big(\Nc_\delta(\bD)\big)} g\|_p \leq \Dec_p^\M(\delta) (\sum_{\D' } \|P_{\Nc_\delta(\D')  \bigcap \Mb\big(\Nc_\delta(\bD)\big)} g\|_p^2)^{1/2} \end{equation} where each $\D'$ is flat in the sense of the inequality in \eqref{parti}, and their $\delta$-neighborhoods partition $\Mb\big(\Nc_\delta(\bD)\big)$. Letting $\D = \Mb^{-1}\big(\Nc_\delta(\D')  \bigcap \Mb(\bD)\big) \subset \bD$, undoing the change of variables induces the final inequality $$ \|P_{\Nc_\delta(\Mf)} f\|_p \lesssim \big(C\sigma^{-1}/(n+1)!\big)^{1/2} \Dec_p^\Mf(\bd) \Dec_p^\M(\delta) \Big(\sum_\D \|P_{\Nc_\delta(\D)} f\|_p^2 \Big)^{1/2}.$$
The caps $\D$ comprise our partition $\Pc_\delta(\M)$, and we have ultimately proven \begin{equation} \label{8.6} \Dec_p^\Mf(\delta) \lesssim \big(C\sigma^{-1}/(n+1)!\big)^{1/2} \Dec_p^\M(\delta) \Dec_p^\Mf(\delta^{\frac{n+1}{n+2}}). \end{equation}

Recall $B = \big(\frac{C\sigma^{-1}}{(n+1)!}\big)^{1/2}$. Let $c = \frac{n+1}{n+2} < 1.$ Inequality \eqref{8.6} may be iterated, ultimately yielding $$\Dec_p^\Mf(\delta) \leq  B^N \prod_{i=0}^{N-1} \Dec_p^\M(\delta^{(\frac{n+1}{n+2})^i}) \Dec_p^\Mf(1/2)$$ with $N$ determined by $$\delta^{c^N} > \frac{1}{2}.$$ Quoting \eqref{7.12}, we bound each term $$\Dec_p^\M(\delta^{(\frac{n+1}{n+2})^i}) \leq (\delta^{-3\e/2(\frac{n+1}{n+2})^i})^{n/2}(\k\Dec_p^\P(\delta^{(\frac{n+1}{n+2})^i}))^{2\e^{-1}n \log n}\prod_{j=1}^{n-1} \log (j!\delta^{-(\frac{n+1}{n+2})^i\frac{j}{n+1}}).$$ We estimate \begin{IEEEeqnarray*}{rCll} (\k^{2\e^{-1} n \log n}B)^N &<& (\k^{2\e^{-1} n \log n}B)^{\frac{\log \frac{\log{1/4}}{\log \delta}}{\log c} } & = \Big(\frac{\log 1/4}{\log \delta}\Big)^{\frac{\log B\k^{2\e^{-1} n \log n}}{\log c} }\\  &\leq& \Big(\log \frac{1}{\delta}\Big)^{\frac{\log B\k^{2\e^{-1} n \log n}}{\log c^{-1}}}\end{IEEEeqnarray*} and also $$\prod_{i=0}^{N-1} (\delta^{-3\e/2(\frac{n+1}{n+2})^i})^{n/2}  = (\delta^{-3\e n/4})^{\sum_{i=0}^{N-1} (\frac{n+1}{n+2})^i} \leq \delta^{-2\e n^2}.$$ Employing monotonicity of the decoupling constant and implementing the bound $\Dec_p^\P(\delta) \leq (\log 1/\delta)^{O(1)}$, we deduce $$\prod_{i=0}^{N-1}\Dec_p^\P(\delta^{(\frac{n+1}{n+2})^i}) \leq \Big(\log \frac{1}{\delta}\Big)^{O\big(\frac{\log \log \frac{1}{\delta}}{\log c^{-1}}\big)}.$$ Similarly, $$\prod_{i=0}^{N-1}\prod_{j=1}^{n-1} \log(j!\delta^{-(\frac{n+1}{n+2})^i\frac{j}{n+1}}) \leq \prod_{j=1}^{n-1} (\log j!\delta^{-\frac{j}{n+1}})^{O\big(\frac{\log \log \frac{1}{\delta}}{\log c^{-1}} \big)}.$$ 

Altogether, it has been shown that $$\Dec_p^\Mf(\delta) \lesssim \delta^{-2\e n^2} \Big(\log \frac{1}{\delta} \Big)^{\big[\log B\k^{2\e^{-1}n \log n} + n\e^{-1} (\log n)O(\log \log 1/\delta) \big](\log \frac{n+2}{n+1})^{-1}}$$ $$\cdot \,\prod_{j=1}^{n-1} (\log j!\delta^{-\frac{j}{n+1}})^{O\big(\frac{\log \log \frac{1}{\delta}}{\log \frac{n+2}{n+1}}\big)}.$$ 

\end{proof}
$$ $$
\section*{Appendix: Construction of $\Pc_\delta(\M^n)$ with almost rectangular caps} 

The caps $\D$ presented for Theorems \ref{t0} and \ref{t0.1} were verified only to be almost flat, meaning that $\D$ lies within a cylinder of height $\delta$ over an adjacent tangent plane. In many typical circumstances, such geometric information is sufficient. However, there are scenarios where it may be desired for $\D$ to exhibit flatness in each of the tangent directions too. Equivalently, we might hope that $\D$ can be approximated by a rectangular box. Note that it is the lack of ``complete" flatness, i.e. presence of some principal curvature, that allows for decoupling. If a given decoupling partition does not portray almost rectilinearity, we should anticipate that it is then not maximal, and the caps may be partitioned further.

For reasons that will be divulged next, we strongly suspect that not all of the caps in our $\Pc_\delta(\M)$ derived previously are almost rectangular (although a substantial number are). Therefore, it is our goal in this appendix to address this consideration and produce a $\ell^2$ decoupling partition that is assuredly maximal.

\subsection*{Definition of Almost Rectangular Caps} In this section, we derive a precise description of almost rectangular caps within $\M$. Indeed, this is sufficient for the general case as we have affine transformations mapping small perturbations of $\M$ to any given ruled curve-generated hypersurface. It shall be clear from the definition below that such maps preserve approximation by rectangles. Formally, almost rectangular caps $\D$ have been defined in the literature as follows. $\D$ must be a subset of $\M$ subject to the condition that there is a rectangular parallelepiped (or ``box") $\Rc$ with $O(1)$ enlargement $\tilde{\Rc}$ satisfying \begin{equation} \label{6.01} \Rc \subset \Nc_\delta(\D) \subset \tilde{\Rc}. \end{equation} From the standpoint of $\ell^2$ decoupling, the finest partition of $\M$ that possibly can be taken is one where the caps are almost rectangular in a maximal sense (see Proposition in \cite{}). Almost flatness is not the limiting mark; its presence only implies that curvature, and hence decoupling, must be acquired elsewhere, specifically by analysis of the projection of $\M$ onto a tangent plane. 

Let $\{\textbf{f}_1, \dots, \textbf{f}_{n+1}\}$ be a vector basis that corresponds to the orientation of $\Rc$. Condition \eqref{6.01} implies that the coordinate lengths of $\Nc_\delta(\D)$ relative to $\{\textbf{f}_1, \dots, \textbf{f}_{n+1}\}$ are essentially the corresponding side lengths $\mathcal{L}_i$ of $\Rc$. In other words, fixing the origin to be a point in $\Rc$, the second containment in \eqref{6.01} is equivalent to the inequality \begin{equation} \label{bound} |\textbf{f}_i \cdot q| \lesssim \mathcal{L}_i \end{equation} being true uniformly for each $q \in \Nc_\delta(\D)$. Such a condition can be implied formulaically for moment surfaces using their parametric description. 

\begin{lemm} \label{lemA} Let $\D = \x([0,T] \times \prod_{i=1}^{n-1} \a_i \Ic_{k_i})$, where \begin{equation} \label{a1} T \leq (1/10)\min \big\{\min_{2 \leq i \leq n-1} i^{-1}2^{k_{i-1} - k_{i}}, (2^{k_{n-1}}\delta/(n+1)!)^{1/2}, 2^{-k_1} \big\}. \end{equation} Then, $\D$ is almost rectangular. \end{lemm}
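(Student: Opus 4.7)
The plan is to sandwich $\Nc_\delta(\D)$ between two axis-aligned boxes in $\R^{n+1}$ with sides along the standard basis $\{\ev_1, \ldots, \ev_{n+1}\}$ and side lengths
$$\mathcal{L}_i \sim i!\,2^{-k_i}\ \text{for }\ 1 \leq i \leq n-1, \qquad \mathcal{L}_n \sim n!\,T\,2^{-k_{n-1}}, \qquad \mathcal{L}_{n+1} \sim \delta.$$
These reflect the size of the dominant monomial in the $i$-th component of $\x(t,s)$ under the parameter restrictions imposed by \eqref{a1}. As a preliminary, I would invoke Lemma \ref{l2} to assume the cap starts at $t_0 = 0$; this is convenient because for the moment curve the transverse vector $\phi^{(n+1)}(t)/(n+1)!$ is the constant $\ev_{n+1}$.

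For the outer containment $\Nc_\delta(\D) \subset \tilde{\Rc}$, I would examine the $i$-th component $\xi_i = t^i + \sum_{j=1}^{i}(i)_j s_j t^{i-j}$ and identify the dominant term via a telescoping comparison: consecutive terms $(i)_{j-1} s_{j-1} t^{i-j+1}$ and $(i)_j s_j t^{i-j}$ have ratio $\lesssim 1/(j(i-j+1))$ by the first bound in \eqref{a1}. Hence for $1 \leq i \leq n-1$ the dominant term is $i!\,s_i \sim i!\,2^{-k_i}$; for $i = n$ (where $s_n = 0$) it is $n!\,s_{n-1} t$ of size $\sim n!\,T\,2^{-k_{n-1}}$; and for $i = n+1$ (where $s_n = s_{n+1} = 0$) it is $\tfrac{(n+1)!}{2} s_{n-1} t^2 \lesssim \delta$ by the second bound in \eqref{a1}. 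The $\delta$-wide transverse direction contributes a further $O(\delta)$ along $\ev_{n+1}$.

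For the inner containment $\Rc \subset \Nc_\delta(\D)$, which is the main obstacle, I would argue that the parametrization
$$F(t, s_1, \ldots, s_{n-1}, v) = \x(t, s) + \frac{v}{(n+1)!}\phi^{(n+1)}(t)$$
is a near-affine diffeomorphism from the parameter box onto a parallelepiped that contains $\Rc$. Fix a central point $s^* = (s_1^*, \ldots, s_{n-1}^*)$. At $(0, s^*, 0)$, with columns ordered $(\partial_t, \partial_{s_1}, \ldots, \partial_{s_{n-1}}, \partial_v)$, the Jacobian has $\partial_{s_j}$-column $j!\,\ev_j$, $\partial_v$-column $\ev_{n+1}$, and $\partial_t$-column $\ev_1 + \sum_{i=2}^{n} i!\,s_{i-1}^*\,\ev_i$. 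Expanding the determinant first along the $(n+1)$-th row and then along the $n$-th row yields
$$\det J = (-1)^{n+1}\,n!\,s_{n-1}^* \prod_{j=1}^{n-1} j!,$$
which is nonzero since $|s_{n-1}^*| \sim 2^{-k_{n-1}}$. Stability of $J$ across the parameter box follows because the higher derivatives of the polynomial $F$ produce monomials of the same flavor as those already controlled in the outer containment, and by \eqref{a1} these remain subordinate to the diagonal entries of $J$ once each output coordinate is rescaled by $\mathcal{L}_i^{-1}$. A quantitative inverse-function-theorem argument then places the image of the parameter box within an $O(1)$ factor of the target parallelepiped spanned by $\mathcal{L}_i\,\ev_i$, completing the inner containment.

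The principal difficulty is the Jacobian-stability step. While the determinant at the central point and the identification of side lengths are routine, certifying that variations of $\partial_t F$ over the full parameter box are absorbed into the $O(1)$-enlargement demands tracking the mixed partials of $\x$ and relies on the second bound in \eqref{a1} to keep the quadratic-in-$t$ contribution to the $(n+1)$-th coordinate below $\delta$. Once this is in hand, the sandwich $\Rc \subset \Nc_\delta(\D) \subset \tilde{\Rc}$ establishes almost rectangularity of $\D$.
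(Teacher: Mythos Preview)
Your outline is sound and would yield a correct proof, but your route to the inner containment is genuinely different from the paper's. For the outer containment both arguments coincide: the paper's inequality \eqref{a2} is precisely your telescoping comparison, bounding each component $\xi_i\circ\x$ by $\sim i!\,2^{-k_i}$ (and $\xi_n,\xi_{n+1}$ by $\sim n!\,2^{-k_{n-1}}T$ and $\delta$ respectively). For the inner containment, however, the paper avoids any quantitative inverse function theorem. Instead it exploits the \emph{triangular} structure of the moment-curve components: for each fixed $t$ and each target $(a_1,\dots,a_{n-1})$ in a half-shrunk rectangle $R'$, the system $\xi_i\circ\x(t,s)=a_i$ can be solved successively for $s_1,\dots,s_{n-1}$, with the corrections controlled by \eqref{a2} so that $s$ stays in the parameter domain. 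Letting $t$ run from $0$ to $T$ then fills in the $\xi_n$-direction by the intermediate value theorem (the paper phrases this via local invertibility of $\pi\circ\x$ and connectedness), producing the box $\bar R=R'\times[0,\beta]$ inside $\pi(\D)$ with $\beta\sim n!\,2^{-k_{n-1}}T$. Your approach, by contrast, rescales inputs and outputs so that the Jacobian of $F$ becomes a permutation matrix plus an off-diagonal perturbation whose entries decay like $1/(5i)$ along the $\partial_t$-column; the Frobenius norm of this perturbation is then $O(1)$ uniformly in $n$, and a quantitative inverse function theorem finishes. This is more conceptual and would transfer to other curves, but it obliges you to verify the second-derivative bounds you only sketch (they do follow from the same telescoping, e.g.\ $\partial_{\tilde t}^2\tilde\xi_i\lesssim 1/(i(i-1))$), whereas the paper's explicit triangular solve sidesteps that bookkeeping entirely.
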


\begin{proof}

First we comment that the hypothesis implies that $T$ also satisfies $$T \leq (1/10) 2^{-k_1}$$ since $2^{-k_{n-1}} \geq \delta^{\frac{n-1}{n+1}}$ and $2^{-k_1} \geq \delta^{\frac{1}{n+1}}.$

Recall that the tangent plane to $\M$ at any $\x(0,s)$ is the hyperplane $\xi_{n+1} = 0$ (to be hereafter identified with $\R^n$). We first observe that there is an $(n-1)$-dimensional rectangle $R \subset \D \cap \R^n$. Note that $\{t=0\} \cap \D$ is the rectangle $$R = \a_11!\Ic_{k_1} \times \cdots \times \a_{n-1}(n-1)!\Ic_{k_{n-1}} \times \{0\} \times \{0\}.$$ In turn, the hypothesis implies that the orthogonal projection $\D_{proj}$ of $\D$ onto $\R^{n-1}$ is approximated by $R$. Letting $s_i' = i!s_i$, by \eqref{a1} we have that for each $\x(t,s) \in \D$ and each $i$ \begin{IEEEeqnarray*}{rCll} t^i + \sum_{j=1}^{i-1} (i)_j |s_j| t^{i-j} &=& \sum_{j=0}^{i-1} \binom{i}{j} |s_j'| t^{i-j} \\ &\leq& \Big(\sum_{j=0}^{i-1} \binom{i}{j}\frac{1}{5^{i-j}}\frac{j!}{i!} \Big)|s_i'| & \ \leq (1/10) i!2^{-k_i} \IEEEyesnumber \label{a2}. \end{IEEEeqnarray*} We have shown that the coordinate lengths of $\D_{proj}$ relative to the orientation of $R$ correspond to the side lengths of $R$, in accordance with \eqref{bound}.

Now there isn't a rectangular box contained in $\Nc_\delta(\D)$ extending over the whole of $R$, but there is one extending over a similar $R''$ contained within. Due to \eqref{a2} (and the inequality $T \leq (1/10) |s_1|$ from \eqref{a1}) , there exist solutions $\x(T,s) \in \D$ to the systems of equations \begin{equation} \label{a3} T^i + \sum_{j=1}^i (i)_j s_j T^{i-j} = a_i, \qquad 1\leq i \leq n-1,\end{equation} so long as $|a_i| \in \mathring{\Ic}_{k_i}$ where $\mathring{\Ic}_{k_i}$ is the contraction of $\a(i)i!\Ic_{k_i}$ about its midpoint by a factor $1/2$. We take $$R' = \prod_{i=1}^{n-1} \mathring{\Ic}_{k_i} \times \{0\} \times \{0\}.$$ It remains to determine that there is a box extending over $R'$ within $\Nc_\delta(\D)$ of sufficient height.

Let $\pi: \R^{n+1} \rightarrow \R^n$ be the orthogonal projection onto the first $n$ components. The map $\pi \circ \x$ maps between Euclidean spaces of the same dimension and is continuously differentiable with derivative matrix determined by a rescaling of the columns of $\Lambda$ from \eqref{defA}. By direct computation, we see that $\pi \circ \x$ has Jacobian equal to $|s_{n-1}|\prod_{i=1}^n i!$, so that in particular it is nonzero away from $\{s_{n-1} = 0\}$ and therefore locally invertible there. Since $\D$ is the image of a connected set, it is then elementary to see that $\pi(\D)$ contains the vertical line segment connecting any point in $o_1 \in R'$ to any point $o_2 \in \pi(\D)$ directly above $o_1$. By \eqref{a2}, we know that we can take $o_2$ to have height at least $$\beta= (1/2)n!2^{-k_{n-1}}T,$$ and therefore, $\pi(\D)$ contains $\bar{R} = R' \times \a_{n-1}[0,\beta].$ Since $\beta$ is approximately the maximum height of $\pi(\D)$ over the hyperplane $\xi_n = 0$, it follows that $\bar{R}$ approximates $\pi(\D)$.

Finally, we observe that the same reasoning employed in \eqref{a2} can be used to deduce that $\D$ lies within $\delta$ of the tangent plane $\xi_{n+1} = 0$. We conclude that the rectangular box $\Rc$ of height $\delta$ over $\bar{R}$ approximates $\Nc_\delta(\D).$

\end{proof}

\begin{re} Note that the key computations \eqref{a2} and \eqref{a3} used in the proof of Lemma \ref{lemA} are certainly unaffected by permitting each $s_i$ to decrease to zero. Therefore, the proof above actually shows that $\x([0,T] \times \prod_{i=1}^{n-1} \a_i[0, 2^{1-k_i}])$ is almost rectangular also. We shall utilize this fact in the next section. \end{re}

   \subsection*{Construction of a Maximal $\ell^2$ Decoupling Partition} Lemma \ref{lemA} produces a sufficient condition for a cap to be almost rectangular. This same condition also gives us grave concern that the caps in Theorems \ref{t0} and \ref{t0.1} are not almost rectangular (though they are almost flat). We conjecture that the almost rectangular caps of $\M$, hence $\Mf$, are characterized by \eqref{a1} in Lemma \ref{lemA}, but as of yet, the author has not been able to produce a proof.
   
Nevertheless, Lemma \ref{lemA} serves as our guidepost in deriving a different and explicit decoupling partition $\overline{\Pc}_\delta(\M)$ of $\M$ comprised of almost rectangular caps. First, to draw inspiration, let us examine the final component of $\x$ from a slightly different perspective \begin{equation} \label{6.0} t^{n+1} + (n+1)_1 t^n s_1 + \cdots + (n+1)_{n-1} t^2 s_{n-1}. \end{equation} Previously, it was focus on the term $(n+1)_{n-1} t^2 s_{n-1}$, which typically dominates, that led us to record that this term is $O(\delta)$ so long as \begin{equation} \label{6.1} t \lesssim (s_{n-1}^{-1}\delta/(n+1)!)^{1/2}. \end{equation} Even when this is true, however, it does not always follow that the other terms in \eqref{6.0} are $O(\delta)$ also. The scale mentioned in \eqref{6.1} may not be sufficiently small, i.e. it may be true that \begin{equation} \label{6.2} \min\{(s_{n-1}/s_i)^{(n-1-i)^{-1}} :1 \leq i \leq n-2\} < (s_{n-1}^{-1}\delta/(n+1)!)^{1/2}. \end{equation} In this case, the value on the left side of \eqref{6.2} will be the $t$-length of maximally flat caps.

We conclude that the geometric description of flat $\D$ at most depends on two elements from $\{s_i\}$. Thus for $n \geq 5$, we are guided to the following reconfiguration of the annuli $\Ac_{k_1, \dots, k_{n-1}}.$\footnote{Note that the decoupling partition of Theorem \ref{t1} meets the condition of Lemma \ref{lemA}.} Let us label $\Ac_{k_1, \dots, k_{n-1}}$ as {\em eccentric} if it satisfies \eqref{6.2}, i.e. $$\exists \text{ } i_0 \text{ such that } (2^{-k_{n-1}+k_{i_0}})^{(n-1-i_0)^{-1}} < (2^{k_{n-1}}\delta/(n+1)!)^{1/2}.$$ Define \begin{equation} \label{6.211} A^{i,j}_{k_j, k_{n-1}} = 2^{\lceil (n-1-j)^{-1}((j-i)k_{n-1} - (n-1-i)k_j) \rceil}.\end{equation} If $(s_{n-1}/s_j)^{(n-1-j)^{-1}}$ attains the minimum in \eqref{6.2} with $s_j \sim 2^{-k_j}$ and $s_{n-1} \sim 2^{-k_{n-1}}$, then $A^{i,j}_{k_j, k_{n-1}}$ is the approximate value of $s_i$ at which \begin{equation} \label{6.21} (s_{n-1}/s_i)^{(n-1-i)^{-1}} = (s_{n-1}/s_j)^{(n-1-j)^{-1}}.\end{equation} We collect the eccentric annuli into pairwise disjoint sets of the form \begin{equation} \label{6.3} \Ac_{k_j, k_{n-1}} = \{\x(t,s) : s_j \sim 2^{-k_j}, s_{n-1} \sim 2^{-k_{n-1}}, s_i \in [0, A^{i,j}_{k_j, k_{n-1}})\}. \end{equation} The key characteristic of \eqref{6.3} is that $j$ attains the minimum in \eqref{6.2} throughout $\Ac_{k_j, k_{n-1}}$. 

Note that, in contrast to the $\Ac_{k_1, \dots, k_{n-1}}$, within $\Ac_{k_j, k_{n-1}}$ each $s_i$  extends to zero for $i \ne j$. The same will be true for our elements $\D \in \overline{\Pc}_\delta(\M^n)$ that lie within $\Ac_{k_j, k_{n-1}}$. Letting $I$ denote an interval of length $\sim n^{-1}(2^{-k_{n-1} + k_j})^{(n-1-j)^{-1}}$, such $\D$ have the form \begin{equation} \label{6.30} \x(I \times \{(s_1, \dots, s_{n-1}): s_{n-1} \in [2^{-k_{n-1}}, 2^{-k_{n-1}+1}), s_j \in [2^{-k_j}, 2^{-k_j + 1}), s_i \in [0, A^{i,j}_{k_j, k_{n-1}})\}).\end{equation} 

On the other hand, the non-eccentric $\Ac_{k_1, \dots, k_{n-1}}$ are grouped into conglomerate sets of the form \begin{equation} \label{6.4} \Ac_{k_{n-1}} = \{\x(t,s): s_{n-1} \sim 2^{-k_{n-1}}, s_i \in [0, A_{i, k_{n-1}})\} \end{equation} where $$A_{i, k_{n-1}} = \min \{2, (2^{-(n+1-i)k_{n-1}}\delta^{-(n-1-i)})^{1/2}\}.$$ As before, $A_{i, k_{n-1}}$ is the approximate value of $s_i$ at which \begin{equation} \label{6.22} (s_{n-1}/s_i)^{(n-1-i)^{-1}} = (s_{n-1}^{-1}\delta)^{1/2} \end{equation} if such equality can be attained within the compact hypersurface $\M$. Thus, letting $I$ now denote an interval of length $\sim n^{-1}(2^{k_{n-1}}\delta/(n+1)!)^{1/2}$, $\Ac_{k_{n-1}}$ will be partitioned into flat caps $\D \in \overline{\Pc}_\delta(\M)$ of the form \begin{equation} \label{6.5} \x(I \times \prod_{i < n-1} [0, A_{i, k_{n-1}}) \times [2^{-k_{n-1}}, 2^{1-k_{n-1}})). \end{equation}

Finally, we check that the caps defined in \eqref{6.30} and \eqref{6.5} are almost rectangular. It only remains to check \eqref{a1} in Lemma \ref{lemA}. 
Concerning \eqref{6.30}, we have from \eqref{6.211} that $s_i$ ranges between 0 and $$L_i \sim  ((2^{-k_{n-1}})^{i-j}(2^{-k_j})^{n-1-i})^{(n-1-j)^{-1}},$$ whose consecutive ratios satisfy $$L_i/L_{i-1} = (2^{-k_{n-1} + k_j})^{(n-1-j)^{-1}}$$ as desired. A similar computation occurs for the caps in \eqref{6.5}. 

\subsection*{$\ell^2$ Decoupling over a Maximal Partition} We have deduced a partition $\overline{\Pc}_\delta(\M)$ in the previous section that would be maximal for $\ell^2$ decoupling. It is natural to ask whether the results in this paper lead toward an $\ell^2$ decoupling over $\Pco_\delta(\M)$. Indeed, the answer is affirmative. For the sake of brevity, we only sketch the proof here. The key observation is that the decoupling inequality of Theorem \ref{t0}, also Theorem \ref{t0.1}, may be upgraded to an inequality roughly of the form \begin{equation} \label{a5} \|\sum_\t P_{\Nc_\delta(\t)} f_\t\|_p \lesssim_{\e, \delta, n}(\sum_\t \|f_\t\|_p^2)^{1/2}\end{equation} where $\{\t\}$ is an appropriately chosen refinement of $\Pc_\delta(\M)$, the functions $f_\t \in L^p(\R^{n+1})$ are arbitrary, and each $P_{\Nc_\delta(\t)} f_\t$ is the Fourier projection of $f_\t$ onto $\Nc_\delta(\t)$. Inequality \eqref{a5} may be secured using standard techniques; the approach relevant to our scenario here was pioneered in \cite{K1}. Using smooth convolution operators $\tilde{P}_\t$ to approximate $P_\t$, we may extend inequality \eqref{1.0} to the precise form of \eqref{a5} via Young's convolution inequality. In analogy with Section 7.1 of \cite{K1}, the $\tilde{P}_\t$ may be defined using the map $$\Psi(t,s,v) = \x(t,s) + v\ev_{n+1}$$ together with the rescaling maps of Section \ref{sgen} to define bump functions over each $\t$. Indeed, $\Psi$ is easily checked to be a local diffeomorphism away from $s_{n-1} = 0$, which guarantees that we may execute the construction analogous to what is done in \cite{K1}. (Note that the elements of $\Pc_\delta(\M)$ near $s_{n-1} = 0$ are derived from a lower-dimensional parabolic decoupling, and so the corresponding bump functions may be defined utilizing the geometry of a parabola in $\R^n$.) 

Once \eqref{a5} has been upheld, the argument may be concluded. Each $\t$ is contained in exactly one $\D \in \Pc_\delta(\M)$, and the number of $\t$ contained in a given $\D$ is bounded by a constant $\Cf$ that may be depend on $n$. The crucial observation to make is that each $\D$, being contained in some annulus $\Ac_{k_1, \dots, k_{n-1}} \subset \M$, was determined in \eqref{7.14} to have $t$-length $T_\D$ bounded by $$\eta = n^{3/2}2^{(-k_{n-1}+k_j)(n-1-j)^{-1}}.$$ By Lemma \ref{lemA}, we have that $$T_\D \geq n^C\eta$$ for some fixed $C \in \R$. Applying H\"older's inequality to each term on the right of \eqref{1.02}, we may assume that from the beginning $\Pc_\delta(\M)$ was chosen such that \begin{equation} \label{a20} T_\D \sim n^{-5/2 -O(1)} \eta \end{equation} uniformly in $\D$ and $k_1, \dots, k_{n-1}$. Note that the value in \eqref{a20} is essentially the $t$-length of the corresponding caps $\bD \in \Pco_\delta(\M)$ contained in $\Ac_{k_j, k_{n-1}} \supset \Ac_{k_1, \dots, k_{n-1}}$. In fact, if $I$ is the interval comprised of the $t$-values of $\bD$ and $\D^I_{k_1, \dots, k_{n-1}}$ is the cap in $\Ac_{k_1, \dots, k_{n-1}}$ with $t$-interval $I$, then $$\bD = \bigcup_{\substack{2^{-k_i} \in [0, A^{i,j}_{k_j, k_{n-1}})\\ i \notin \{j, n-1\}}} \D^I_{k_1, \dots, k_{n-1}}.$$ Choosing $f_\t = P_{\Nc_\delta(\bD)} f$ where $\bD \supset \t$ and plugging into \eqref{a5}, we obtain the following theorem.

\begin{te} \label{lastth}For each $2 \leq p \leq 6$, the following inequality holds for each fixed $\e > 0$ and all $f$: $$\|P_{\Nc_\delta(\M)} f\|_p \lesssim_n (30\delta^{-3\e/2})^{n/2}  \big(\kappa \log \big(1/\delta \big)\big)^{\e^{-1}O(n \log(n))} \Big(\sum_{\bD \in \Pco_\delta(\M)} \|P_{\bD} f\|_p^2 \Big)^{1/2}.$$ $\Pco_\delta(\M)$ is a partition of $\Nc_\delta(\M)$ by almost rectangular caps. \end{te}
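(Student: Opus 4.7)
My plan is to derive Theorem \ref{lastth} from the almost-flat decoupling of Theorem \ref{t0.1} in two stages: first, upgrade Theorem \ref{t0.1} to an extension-style inequality that accepts arbitrary inputs $f_\t$ on the right, and second, aggregate the resulting refined caps into the almost-rectangular partition $\Pco_\delta(\M)$ using the combinatorics developed in the previous subsection.

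For the first stage, I would pass to a refinement $\{\t\}$ of $\Pc_\delta(\M)$ in which each $\D \in \Pc_\delta(\M)$ is subdivided into $O_n(1)$ pieces, and aim to prove the extension-form inequality
\begin{equation}
\Big\|\sum_\t P_{\Nc_\delta(\t)} f_\t\Big\|_p \lesssim K_{\e, \delta, n} \Big(\sum_\t \|f_\t\|_p^2\Big)^{1/2}
\end{equation}
for arbitrary $f_\t$, with $K_{\e,\delta,n}$ quantitatively matching the constant from \eqref{1.02}. Following the template of Section 7.1 of \cite{K1}, I would build smooth convolution operators $\tilde P_\t$ whose multiplier symbols equal $1$ on $\Nc_\delta(\t)$, are supported in a slight enlargement, and have $L^1$-normalized kernels uniformly bounded in $\t$. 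To define such bumps I would use the parametrization $\Psi(t,s,v) = \x(t,s) + v\ev_{n+1}$ of a tubular neighborhood of $\M$, which is a local diffeomorphism away from $\{s_{n-1}=0\}$, and pull back a standard unit-cube bump along $\Psi^{-1}$ after pre-composing with the rescalings and translations of Section \ref{sgen} that normalize $\t$. Young's convolution inequality then permits $\tilde P_\t$ to replace $P_{\Nc_\delta(\t)}$ on both sides of Theorem \ref{t0.1}, yielding the extension inequality with only a bounded multiplicative loss.

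For the aggregation stage, I would verify that each $\bD \in \Pco_\delta(\M)$ is a bounded union of almost-flat caps $\D \in \Pc_\delta(\M)$. The $t$-length \eqref{7.14} of $\D$ matches the $t$-length scales appearing in \eqref{6.30} and \eqref{6.5} up to a polynomial-in-$n$ factor, while the $s_i$ ranges of $\bD$ decompose as unions of $O(1)$ dyadic annular slabs defining a $\D$. Consequently, each refinement cap $\t$ lies inside a unique $\bD_\t$ with at most $\Cf = \Cf(n)$ refinement caps per $\bD$. Plugging $f_\t := P_{\Nc_\delta(\bD_\t)} f$ into the extension inequality, the left side reconstructs $P_{\Nc_\delta(\M)} f$ modulo finite overlap, while the right side is controlled by $\Cf^{1/2}(\sum_{\bD} \|P_{\Nc_\delta(\bD)} f\|_p^2)^{1/2}$. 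Absorbing $\Cf$ into the $\lesssim_n$ yields the claim.

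The main obstacle is the bump construction near the degenerate locus $\{s_{n-1}=0\}$, where the Jacobian of $\Psi$ vanishes and the direct pullback scheme breaks down. The refinement caps $\t$ touching this locus correspond to those where $k_{n-1} = \bar k_{n-1}$, and by Lemma \ref{le1} the decoupling there is driven entirely by a parabolic inequality in the $(\xi_1,\xi_2)$-plane. For such $\t$ I would instead construct bumps by lifting parabolic bumps via the cylindrical Fubini construction of Proposition \ref{pr2}, treating the transverse directions trivially through Minkowski's inequality. Checking that the resulting convolution kernels have uniformly bounded $L^1$ norm across both the nondegenerate and degenerate regimes is the delicate verification, but once the two bump templates are in hand the remaining steps are routine.
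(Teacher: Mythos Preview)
Your overall strategy matches the paper's own sketch almost exactly: upgrade Theorem~\ref{t0.1} to the extension-form inequality \eqref{a5} via smooth bump operators built from $\Psi$ and the rescalings of Section~\ref{sgen} (handling the degenerate region $\{s_{n-1}=0\}$ with parabolic bumps), and then plug in $f_\t = P_{\Nc_\delta(\bD)} f$. Both the bump construction and the treatment of the degenerate locus are precisely what the paper does.

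There is, however, a genuine error in your aggregation count. You assert that each $\bD \in \Pco_\delta(\M)$ decomposes into $O(1)$ almost-flat caps $\D$ because ``the $s_i$ ranges of $\bD$ decompose as unions of $O(1)$ dyadic annular slabs.'' This is false: for $i \notin \{j, n-1\}$ the $s_i$-range of $\bD$ is the full interval $[0, A^{i,j}_{k_j, k_{n-1}})$, which is a union of \emph{all} dyadic annuli $\Ic_{k_i}$ with $2^{-k_i}$ below that threshold --- roughly $\log(1/\delta)$ of them per coordinate. Consequently the number of refinement caps $\t$ inside a given $\bD$ is of order $\Cf(n)\cdot(\log \delta^{-1})^{n-3}$, not $\Cf(n)$. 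This polylogarithmic multiplicity cannot be absorbed into $\lesssim_n$ as you propose; rather, it is absorbed into the factor $(\kappa \log(1/\delta))^{\e^{-1} O(n\log n)}$ already present in the stated constant. With that correction your argument goes through and coincides with the paper's.
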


Again by induction on scales, (see \cite{K1}), Theorem \ref{lastth} implies an analogous $\ell^2$ decoupling over arbitrary $\Mf$ using almost rectangular caps.

\end{document}